\def\blue{\color{blue}}
\def\red{\color{red}}
\numberwithin{equation}{section}
\newtheorem{theorem}{Theorem}[section]
\newtheorem{proposition}[theorem]{Proposition}
\newtheorem{lemma}[theorem]{Lemma}
\newtheorem{corollary}[theorem]{Corollary}
\newtheorem{Definition}[theorem]{Definition}
\newtheorem{Remark}[theorem]{Remark}
\newenvironment{remark}{\begin{Remark}\rm}{\end{Remark}}
\newtheorem{RHproblem}[theorem]{RH problem}
\newtheorem{Example}[theorem]{Example}
\newenvironment{example}{\begin{Example}\rm}{\end{Example}}
\newcommand{\C}{\mathbb{C}}
\newcommand{\D}{\mathbb D}
\newcommand{\N}{\mathbb{N}}
\newcommand{\R}{\mathbb{R}}
\newcommand{\T}{\mathbb{T}}
\newcommand{\Z}{\mathbb{Z}}
\newcommand{\OO}{\mathcal O}
\newcommand{\PP}{\mathcal P}
\newcommand{\eps}{\epsilon}
\def \deg{\mbox{{\rm deg}}}
\def\dim{\mathop{\mathrm{dim}}\nolimits}
\def \Poly{\mbox{{\rm Poly}}}
\def\Poly{\mathop{\mathrm{Poly}}\nolimits}
\renewcommand{\bar}{\overline}
\renewcommand{\tilde}{\widetilde}
\renewcommand{\hat}{\widehat}
\begin{document}
\title{Polynomials associated to non-convex bodies}
\author{N. Levenberg and F. Wielonsky}

\maketitle 

\begin{abstract} Polynomial spaces associated to a convex body $C$ in $(\R^+)^d$ have been the object of recent studies. In this work, we consider polynomial spaces associated to non-convex $C$. We develop some basic pluripotential theory including notions of $C-$extremal plurisubharmonic functions $V_{C,K}$ for $K\subset \C^d$ compact. Using this, we discuss Bernstein-Walsh type polynomial approximation results and asymptotics of random polynomials in this non-convex setting.

\end{abstract}

\section{Introduction}  

Pluripotential theory is the study of plurisubharmonic (psh) functions. A fundamental result is that the extremal function
$$V_K(z):= \sup \{u(z):u\in L(\C^d), \ u\leq 0 \ \hbox{on} \ K\}$$
associated to a compact set $K\subset \C^d$, where $L(\C^d)$ is the Lelong class of all plurisubharmonic functions $u$ on $\C^d$ with the property that 
$$u(z) \leq \log^{+} |z|+c_{u}:=\max[0,\log |z|] +c_u,$$ 
where $c_{u}$ is a constant depending on $u$, may be obtained from the subclass of $L(\C^d)$ arising from polynomials:
$$V_K(z)=\max[0,\sup\{\frac{1}{deg(p)}\log |p(z)|:p \in \PP_{d}, \ \|p\|_K:=\max_{\zeta \in K}|p(\zeta)|\leq 1\}],$$
where $\PP_{d}=\C [z_1,...,z_d]$ denotes the family of all holomorphic polynomials of $d$ complex variables. This gives a connection with polynomial approximation; see Theorem \ref{BW} below. It is known that 
$V_K^*(z):=\limsup_{\zeta \to z} V_K(\zeta)$ is either a psh function in $L(\C^d)$ or else $V_K^*\equiv +\infty$ 
(this latter case occurs precisely when $K$ is pluripolar). We say $K$ is {\it regular} if $V_K=V_K^*$; i.e., $V_K$ is continuous.

In recent years, pluripotential theory associated to a convex body $C$ in $(\R^+)^d$ has been developed. Let $\R^+=[0,\infty)$ and fix a convex body $C\subset (\R^+)^d$ ($C$ is compact, convex and $C^o\not = \emptyset$). Associated with $C$ we consider the finite-dimensional polynomial spaces 
\begin{equation}\label{polync}\Poly(nC):=\{p(z)=\sum_{J\in nC\cap (\Z^+)^d}c_J z^J: c_J \in \C\}\end{equation}
for $n=1,2,...$ where $z^J=z_1^{j_1}\cdots z_d^{j_d}$ for $J=(j_1,...,j_d)$. For $C=\Sigma$ where 
\begin{equation}\label{simpl}
\Sigma:=\{(x_1,...,x_d)\in \R^d: 0\leq x_i \leq 1, \ \sum_{i=1}^d x_i \leq 1\},
\end{equation}
we have $\Poly(n\Sigma)$ is the usual space of holomorphic polynomials of degree at most $n$ in $\C^d$. For a nonconstant polynomial $p$ we define 
\begin{equation}\label{degp} \deg_C(p)=\min\{ n\in\N \colon p\in \Poly(nC)\}.\end{equation}
As in \cite{BBL}, \cite{BBLL}, \cite{BosLev}, we make the assumption on $C$ that 
\begin{equation}\label{sigmainkp} \Sigma \subset kC \ \hbox{for some} \ k\in \Z^+.  \end{equation}
Note that under hypothesis (\ref{sigmainkp}), we have $\cup_n \Poly(nC)=\PP_{d}$.

Recall the {\it indicator function} of a convex body $C$ is
\begin{equation}\label{phic}\phi_C(x_1,...,x_d):=\sup_{(y_1,...,y_d)\in C}(x_1y_1+\cdots x_dy_d).\end{equation}
We define the {\it logarithmic indicator function} of $C$ on $\C^d$
\begin{equation}\label{hc}H_C(z):=\sup_{ J\in C} \log |z^{ J}|:=\sup_{ J\in C} \log\left(|z_1|^{ j_1}\ldots |z_d|^{ j_d}\right) =\phi_C(\log|z_1|,\ldots,\log |z_d|)
\end{equation}
(the exponents $j_{k}$ need not be integers) and we use $H_C$ to define a generalization of $L(\C^d)$:
$$L_C=L_C(\C^d):= \{u\in PSH(\C^d): u(z) \leq H_C(z) +c_{u} \}.$$ 
Since $\phi_{\Sigma}(x_{1},\ldots,x_{d})=\max(x_{1}^{+},\ldots,x_{d}^{+})$, we have $L(\C^{d})=L_{\Sigma}$.
Moreover, it was stated in \cite{Bay} and shown in \cite{BHLP} that the {\it $C-$extremal function} 
$$V_{C,K}(z):=\sup \{u(z):u\in L_C(\C^d), \ u\leq 0 \ \hbox{on} \ K\}$$
of a compact set $K$ can be given as 
\begin{equation}\label{usecdef}V_{C,K} =\lim_{n\to \infty} \frac{1}{n} \log \Phi_n=\lim_{n\to \infty} \frac{1}{n} \log \Phi_{n,C,K}\end{equation}
pointwise on $\C^d$ where
$$ \Phi_n(z):= \sup \{|p(z)|: p\in \Poly(nC),  \ \|p\|_K\leq 1\}.$$
For $p\in \Poly(nC), \ n\geq 1$ we have $\frac{1}{n}\log |p|\in L_C$; hence we have a {\it Bernstein-Walsh inequality}
\begin{equation}\label{bwci} |p(z)|\leq \|p\|_Ke^{{\rm deg}_C(p)V_{C,K}(z)}.\end{equation}
We add that for $C$ satisfying (\ref{sigmainkp}), $K$ regular is equivalent to $V_{C,K}=V_{C,K}^*$ ($V_{C,K}$ is continuous); cf., \cite{BosLev}. 

The inequality (\ref{bwci}) leads to a connection with polynomial approximation. For $K$ a compact subset of $\C^{d}$, and $F\in C(K)$, a complex-valued, continuous function on $K$, we define
$$
d_{n}^{C}(F,K)  =\inf_{p\in\Poly(nC)}\|F-p\|_{K}.
$$
The following {\it Bernstein-Walsh type theorem} was proved in \cite{BosLev} to explain the use of various notions of degree for multivariate polynomials introduced by Trefethen in \cite{T}.
\begin{theorem}\label{BW}
Let $K$ be a compact subset of $\C^{d}$ with $V_{C,K}$ continuous and $F\in C(K)$. The following assertions are equivalent.
\\[5pt]
1) $\limsup_{n\to \infty }d_{n}^{C}(F,K)^{1/n}=1/R<1$;
\\[5pt]
2) the function $F$ is the restriction to $K$ of a function $\tilde F$ holomorphic on a domain $\Omega$ containing $K$, and $R$ is the largest real number such that
$$
\Omega_{C,R}=\{(z,w)\in\C^{2}:~V_{C,K}(z,w)<\log R\}\subset\Omega.
$$
\end{theorem}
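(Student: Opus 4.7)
The argument follows the classical Bernstein--Walsh template, with $V_{C,K}$ and the $C$-degree replacing their usual counterparts.

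For $(1)\Rightarrow(2)$, fix any $\rho\in(1,R)$ and choose near-best approximants $p_n\in\Poly(nC)$ with $\|F-p_n\|_K\le\rho^{-n}$ for all $n\ge n_0$, so that $p_n\to F$ uniformly on $K$. Form the telescoping series $F=p_{n_0}+\sum_{n\ge n_0}(p_{n+1}-p_n)$. Since $0\in C$ (forced by (\ref{sigmainkp})), $\Poly(nC)\subset\Poly((n+1)C)$, so $p_{n+1}-p_n\in\Poly((n+1)C)$ and $\|p_{n+1}-p_n\|_K\le 2\rho^{-n}$. The Bernstein--Walsh inequality (\ref{bwci}) then gives
\[
|p_{n+1}(z)-p_n(z)|\le 2\rho^{-n}\,e^{(n+1)V_{C,K}(z)},
\]
which sums locally uniformly on $\Omega_{C,\rho}=\{V_{C,K}<\log\rho\}$ by continuity of $V_{C,K}$. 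The limit defines a holomorphic extension of $F$ to $\Omega_{C,\rho}$, and letting $\rho\uparrow R$ produces $\tilde F$ on $\Omega_{C,R}$. That $R$ is the \emph{largest} such radius is then forced by the converse direction: any extension to some $\Omega_{C,R'}$ with $R'>R$ would imply $\limsup d_n^{C}(F,K)^{1/n}\le 1/R'<1/R$, contradicting the hypothesis.

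For $(2)\Rightarrow(1)$, fix $\rho\in(1,R)$ and consider the sublevel set $K_\rho:=\{V_{C,K}\le\log\rho\}$. Continuity of $V_{C,K}$ together with the growth of $H_C$ at infinity (supplied by (\ref{sigmainkp}) through $\Sigma\subset kC$) makes $K_\rho$ compact and contained in $\Omega_{C,R}\subset\Omega$, so $M:=\|\tilde F\|_{K_\rho}<\infty$. The task is to produce $p_n\in\Poly(nC)$ with $\|F-p_n\|_K$ decaying at rate $\rho^{-n}$ up to subexponential factors. The central tool is the identity (\ref{usecdef}), which under continuity of $V_{C,K}$ upgrades to the Siciak--Zaharjuta statement $\Phi_n^{1/n}\to e^{V_{C,K}}$ uniformly on compacta. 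I would orthogonalize $\Poly(nC)$ in $L^2(K,\mu)$ for a Bernstein--Markov measure $\mu$ on $K$, take $p_n$ to be the truncation at $C$-degree $n$ of the Fourier expansion of $\tilde F$, and estimate the coefficients by a Cauchy-type integral representation over $\partial K_\rho$; this yields $\rho^{-n}$ decay in $L^2(\mu)$, which the Bernstein--Markov property transfers to the sup norm on $K$. Sending $\rho\uparrow R$ gives $\limsup d_n^{C}(F,K)^{1/n}\le 1/R$; combining with $(1)\Rightarrow(2)$ yields equality.

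The main obstacle lies in the constructive half of $(2)\Rightarrow(1)$: the approximants must sit specifically inside $\Poly(nC)$ and must decay at the sharp rate dictated by the level sets of $V_{C,K}$, rather than by the total-degree filtration on which the classical $\C^d$ Cauchy--Weil constructions rely. The resolution is precisely the content of (\ref{usecdef}): because $V_{C,K}=\lim_n\frac{1}{n}\log\Phi_{n,C,K}$ expresses the extremal function through the $\Poly(nC)$ spaces themselves, any $L^2$- or Fekete-point construction carried out inside $\Poly(nC)$ automatically tracks $V_{C,K}$ at the correct growth rate, bypassing the need for an ambient total-degree argument.
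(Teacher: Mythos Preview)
First, note that Theorem~\ref{BW} is not proved in the present paper: it is quoted from \cite{BosLev} as background. So there is no in-paper proof to compare against line by line. That said, the paper contains clear signals about how the argument runs, and your $(2)\Rightarrow(1)$ sketch diverges from it in a way that leaves a genuine gap.

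Your $(1)\Rightarrow(2)$ is correct and is precisely the mechanism the paper reproduces in Proposition~\ref{oneway} (there in the non-convex setting, with the asymptotic Bernstein--Walsh inequality replacing (\ref{bwci})): telescope, apply Bernstein--Walsh termwise, and sum geometrically on compact subsets of $\Omega_{C,\rho}$.

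The gap is in $(2)\Rightarrow(1)$. The step ``estimate the coefficients by a Cauchy-type integral representation over $\partial K_\rho$'' does not make sense in $\C^d$ for $d\ge 2$: there is no boundary Cauchy formula on a general compact set that extracts the $L^2(\mu)$-Fourier coefficients of a holomorphic extension. Concretely, $c_\alpha=\int_K F\,\bar q_\alpha\,d\mu$ is an integral over $K$, not over $K_\rho$, and nothing in that expression detects the analyticity of $\tilde F$ beyond $K$; Cauchy--Schwarz yields only $|c_\alpha|\le\|F\|_K$, with no decay in $\deg_C q_\alpha$. Bootstrapping from the classical $\Sigma$-case via $\Sigma\subset kC$ produces \emph{some} exponential rate, but not the sharp $1/R$ governed by $V_{C,K}$. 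The paper itself flags the missing ingredient at the start of Section~2: the inclusion
\[
\Poly(nC)\cdot\Poly(mC)\subset\Poly\bigl((n+m)C\bigr),
\]
which holds exactly because $C$ is convex, ``is crucial in proving 2) implies 1) in Theorem~\ref{BW}.'' Your outline never invokes it.

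The actual route in \cite{BosLev} (and classically for $C=\Sigma$) is an Oka--Weil/Cauchy--Weil construction: enclose $K_\rho$ in a polynomial polyhedron $\{|p_1|<1,\ldots,|p_N|<1\}\Subset\Omega$ where the $p_j$ are near-extremal for $\Phi_{n}$ (this is where (\ref{usecdef}) enters), write $\tilde F$ via the Weil integral, and expand each kernel factor $(p_j(\zeta)-p_j(z))^{-1}$ in a geometric series in $p_j(z)/p_j(\zeta)$. The approximants are then linear combinations of products $p_1^{k_1}\cdots p_N^{k_N}$, and bounding their $C$-degree by $\sum_j k_j\deg_C(p_j)$ is exactly the multiplicative property above. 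Without convexity that bound fails, which is why the paper singles this step out as the obstruction in the non-convex case.
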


This is a quantitative version of the Oka-Weil theorem: any $F$ holomorphic in a neighborhood of the polynomial hull
\begin{equation}\label{phull} \hat K:=\{z\in \C^d: |p(z)|\leq \|p\|_K, \ p\in \PP_{d}\}\end{equation}
of $K$ can be uniformly approximated on $\hat K$ by polynomials in $\PP_{d}$. Note that the ``smaller'' the convex body $C$, the sparser the collection $\Poly(nC)$ will be. For purposes of numerical analysis, sparseness is desirable. Indeed, 
provided $C\in (\R^+)^d$ is the closure of an open, connected set satisfying
\begin{equation} \label{stdhyp} \epsilon \Sigma \subset C \subset \delta \Sigma \ \hbox{for some} \ \delta > \epsilon >0, \end{equation} 
regardless of whether $C$ is convex, the finite-dimensional spaces $\Poly(nC)$ defined as in (\ref{polync}) make sense (as does the notion of $C-$degree in (\ref{degp})) and $\cup_n \Poly(nC)=\PP_d$. Thus, appealing to Oka-Weil, there is at least a possibility of a version of Theorem \ref{BW} in this setting. Examples of such $C$ are the $l^p$ balls  
\begin{equation}\label{ceepee} C=C_p=\{(x_1,...,x_d)\in (\R^+)^d: x_1^p+\cdots+ x_d^p\leq 1\}\end{equation}
with $0<p<1$. Numerical analysts even consider the limiting case of $p=0$,
\begin{equation}\label{ceez}
C_{0}=\cup_{j=1}^{d}\{(0,\ldots,x_{j},\ldots,0),~0\leq x_{j}\leq1\}
\end{equation}
(but see Example \ref{silly}). 

In this note, we begin a study of polynomial classes associated to non-convex $C$ satisfying (\ref{stdhyp}). The next section discusses general results on $C-$extremal functions $V_{C,K}$ defined in a fashion similar to (\ref{usecdef}). In section 3 we show that while one direction of Theorem \ref{BW} trivially generalizes, the other allows for interesting contrasts. 

These $C-$extremal functions $V_{C,K}$ are difficult to compute explicitly. However, in a probabilistic sense, one can ``generically'' recover them. Let $\tau$ be a probability measure on $K$ which is nondegenerate in the sense that $\|p\|_{\tau}:=\|p\|_{L^2(\tau)}=0$ for a polynomial $p$ implies $p\equiv 0$. Letting $\{p_j\}$ be an orthonormal basis in $L^2(\tau)$ for $\Poly(nC)$ constructed via Gram-Schmidt applied to a monomial basis $\{z^{\nu}\}$ of $\Poly(nC)$ we consider random polynomials of $C-$degree at most $n$ of the form 
$$H_n(z):=\sum_{j=1}^{m_n} a_{j}^{(n)}p_j(z)$$
where the $a_{j}^{(n)}$ are i.i.d. complex random variables and $m_n=$dim$(\Poly(nC)$). This places a probability measure $\mathcal H_n$ on $\Poly(nC)$. We form the product probability space of sequences of polynomials:
$$\mathcal H:=\otimes_{n=1}^{\infty} (\Poly(nC),\mathcal H_n).$$
The following was proved for $C=\Sigma$ in \cite{blrp} and for general convex $C$ in \cite{Bay}.

\begin{theorem} \label{point} Let $\tau$ be a probability measure on $K$ such that $(K,\tau)$ satisfies a Bernstein-Markov property and let $a_{j}^{(n)}$ be i.i.d. complex random variables having distribution $\phi(z)dm_2(z)$ where $dm_2$ denotes Lebesgue measure on $\R^2=\C$. Assume for some $T>0$, 
\begin{equation}\label{hyp1} |\phi(z)|\leq T \ \hbox{for all} \ z\in \C; \ \hbox{and} \end{equation}
\begin{equation}\label{hyp2} 
\left|\int_{|z|\geq R}\phi(z)dm_2(z)\right|\leq T/R^2 \ \hbox{for all} \ R \ \hbox{sufficiently large}.\end{equation}
Then almost surely in $\mathcal H$ we have
$$\bigl(\limsup_{n\to \infty}\frac{1}{n}\log |H_n(z)|\bigr)^*=V_{C,K}^*(z),\quad z\in\C^{d}.$$
\end{theorem}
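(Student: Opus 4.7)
My plan is to prove the two one-sided inequalities $\bigl(\limsup_n n^{-1}\log|H_n|\bigr)^{\!*} \le V_{C,K}^*$ and $\bigl(\limsup_n n^{-1}\log|H_n|\bigr)^{\!*} \ge V_{C,K}^*$ separately, each almost surely in $\mathcal H$. The upper bound is essentially a deterministic consequence of Bernstein-Walsh combined with a polynomial bound on the random coefficients; the lower bound requires a genuine anti-concentration input that exploits the density hypothesis (\ref{hyp1}).

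\textbf{Upper bound.} Applying (\ref{bwci}) to each orthonormal basis element gives $|p_j(z)| \le \|p_j\|_K\, e^{n V_{C,K}^*(z)}$, and the Bernstein-Markov property together with $\|p_j\|_\tau = 1$ provides constants $M_n$ with $M_n^{1/n}\to 1$ and $\|p_j\|_K \le M_n$. From (\ref{hyp2}) we have $P(|a_j^{(n)}| \ge n^2) \le T n^{-4}$; since $m_n = O(n^d)$, the series $\sum_n m_n T n^{-4}$ converges, and Borel-Cantelli ensures that almost surely $\max_{j\le m_n}|a_j^{(n)}| \le n^2$ for all $n$ sufficiently large. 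Combining these three estimates, almost surely
$$|H_n(z)| \le m_n n^2 M_n\, e^{n V_{C,K}^*(z)}, \qquad z\in\C^d,$$
so $n^{-1}\log|H_n(z)| \le V_{C,K}^*(z) + o(1)$ and the upper-semicontinuous regularization inherits the same bound.

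\textbf{Lower bound.} Set $B_n(z) := \sum_{j=1}^{m_n}|p_j(z)|^2$. Since $B_n(z)^{1/2} = \sup\{|p(z)| : p\in\Poly(nC),\,\|p\|_\tau = 1\}$, Bernstein-Markov combined with (\ref{usecdef}) yields $\lim_n n^{-1}\log B_n(z)^{1/2} = V_{C,K}^*(z)$ off a pluripolar set. The essential probabilistic input is an anti-concentration estimate: under (\ref{hyp1}), for any nonzero $(c_j)\in\C^{m_n}$ and $\epsilon>0$,
$$P\!\left(\Bigl|\sum_{j=1}^{m_n} a_j^{(n)} c_j\Bigr| < \epsilon\bigl(\textstyle\sum_j|c_j|^2\bigr)^{1/2}\right) \le C T m_n \epsilon^2,$$
proved by integrating out the coordinate $a_{j^*}^{(n)}$ with $|c_{j^*}|$ maximal and using $\phi\le T$. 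Apply this with $c_j = \overline{p_j(z_k)}$ at each $z_k$ in a countable dense subset of $\C^d$, with $\epsilon = n^{-2}$; the resulting failure probabilities are summable in $n$, so Borel-Cantelli yields almost surely $|H_n(z_k)| \ge n^{-2} B_n(z_k)^{1/2}$ for all large $n$ and every $z_k$. Consequently $\limsup_n n^{-1}\log|H_n(z_k)| \ge V_{C,K}^*(z_k)$ on the dense set, and a standard plurisubharmonic regularization argument propagates the inequality to all of $\C^d$.

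\textbf{Main obstacle.} The hypotheses (\ref{hyp1})--(\ref{hyp2}) are insensitive to the geometry of $C$; all dependence on $C$ is absorbed into $V_{C,K}$ and the identity (\ref{usecdef}). The delicate ingredient that must be imported from the earlier sections of the paper is the convergence of $B_n(z)^{1/(2n)}$ to $e^{V_{C,K}^*(z)}$ off a pluripolar set, which in turn relies on the extension of the polynomial characterization (\ref{usecdef}) to non-convex $C$ satisfying (\ref{stdhyp}). Once that is in hand, the arguments of \cite{blrp} and \cite{Bay} carry over essentially verbatim.
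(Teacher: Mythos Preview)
Your approach matches the paper's own proof of the non-convex analogue, Theorem~\ref{pointa} (Theorem~\ref{point} itself is only cited from \cite{blrp},\cite{Bay}). There the two probabilistic inputs are packaged as Corollary~\ref{useful}: almost surely $\limsup_n n^{-1}\log|\langle a^{(m_n)},w^{(m_n)}\rangle|\le\limsup_n n^{-1}\log\|w^{(m_n)}\|$ for \emph{all} sequences $\{w^{(m_n)}\}$, and the reverse $\liminf$ inequality holds for \emph{each fixed} sequence. Applying this with $w^{(m_n)}=(p_1(z),\dots,p_{m_n}(z))$ reduces both bounds to the Bergman-function asymptotics of Proposition~\ref{touse}. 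You have simply unpacked Corollary~\ref{useful} into its underlying Borel--Cantelli ingredients; in the upper bound you also replace the Bergman-function route by a direct use of the Bernstein--Walsh inequality (\ref{bwci}) on each $p_j$, which is a minor simplification rather than a different idea.

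Two points to tighten. First, with $m_n\asymp n^d$ your choices $R=n^2$ in the upper bound and $\epsilon=n^{-2}$ in the lower bound give failure probabilities of order $m_n n^{-4}\asymp n^{d-4}$, summable only for $d\le2$; any faster polynomial decay (say $n^{-d}$) repairs this without affecting the $n$-th root asymptotics. Second, your closing ``standard plurisubharmonic regularization argument'' is exactly the step where the paper's proof of Theorem~\ref{pointa} invokes continuity of $V_{C,K}$ (hypothesis (\ref{vck})): from $H(z_k)\ge V_{C,K}(z_k)$ on a dense $\{z_k\}$ and upper-semicontinuity of $H$ one obtains $H(z)\ge\limsup_{z_k\to z}V_{C,K}(z_k)$, and it is continuity that turns the right side into $V_{C,K}(z)$. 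You should either make that hypothesis explicit or supply an alternative (e.g., run the $\liminf$ bound on a set of full Lebesgue measure via Fubini and use that two psh functions agreeing a.e.\ agree everywhere). Finally, your ``main obstacle'' paragraph slightly misidentifies the issue: Theorem~\ref{point} is the \emph{convex} case, where (\ref{usecdef}) and the Bergman asymptotics are already available; the non-convex extension you allude to is what the paper needs for Theorem~\ref{pointa}, not here.
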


\noindent The {\it Bernstein-Markov property} will be defined in the next section.

In section 4 we give a version of Theorem \ref{point} in the nonconvex setting. We conclude in section 5 with some open questions.

\section{Non-convex preliminaries} Let $C$ be the closure of an open, connected set satisfying (\ref{stdhyp}). For simplicity, we take $\delta =1$ in (\ref{stdhyp}). As noted in the introduction, the definitions of the vector spaces
\begin{align*}
\Poly(nC)=\{p(z) & =\sum_{J\in nC \cap \N^d}c_{J}z^{J}=\sum_{J\in nC \cap \N^d}c_{J}z_1^{j_1}\ldots z_d^{j_d},~c_{J}\in\C\},\quad n=1,2,\ldots
\end{align*}
and, for a nonconstant polynomial $p$, the $C-$degree 
$$ \deg_C(p)=\min\{ n\in\N\colon p\in \Poly(nC)\},$$
can be defined as in the convex case. However, if $C$ is not convex, two vital ingredients are lacking:
\begin{enumerate}
\item $\Poly(nC)\cdot \Poly(mC)$ may {\it not} be contained in $\Poly(n+m)C$; and 
\item there is no good analogue/replacement for the logarithmic indicator function $H_C$ in (\ref{hc}) and hence the $L_C$ Lelong classes.
\end{enumerate}
Item 1. is crucial in proving 2) implies 1) in Theorem \ref{BW}. To explain 2., for $C$ the closure of an open, connected set satisfying (\ref{stdhyp}), using (\ref{phic}) and (\ref{hc}) yields $H_C=H_{co(C)}$ where $co(C)$ denotes the convex hull of $C$. If, e.g., $C=C_p$ in (\ref{ceepee}) with $0<p<1$, then $H_{C_p}=H_{\Sigma}$. 

\subsection{$C-$extremal function for non-convex $C$}

Given a compact set $K\subset \C^d$, we will define a $C-$extremal function using the $\Poly(nC)$ classes: for $n=1,2,...$, let
\begin{equation}\label{Phin} \Phi_n(z)=\Phi_{n,C,K}(z):=\sup \{|p(z)|: p\in \Poly(nC), \ \|p\|_K\leq 1\}\end{equation}
(note taking $p\equiv 1$ shows $\Phi_n(z)\geq 1$) and
\begin{equation}\label{def-V}
V_{C,K}(z):=\limsup_{n\to \infty} \frac{1}{n}\log \Phi_n(z).
\end{equation}
For a polynomial $p\in \Poly(nC)$ we have $\deg_C(p)\leq n$. Thus 
$$\Phi_n(z)^{{1}/{n}}\leq \Phi_n(z)^{{1}/{{\rm deg}_C(p)}}.$$ 
This shows that 
\begin{equation}\label{vgevc}V_{C,K}(z)\leq \sup\{\frac{1}{{\rm deg}_C(p)}\log |p(z)|:p\in \PP_{d}, \ \|p\|_K\leq 1\}.\end{equation}
We do not know if equality holds (in general) in (\ref{vgevc}). From (\ref{stdhyp}), 
$$\epsilon V_K \leq V_{C,K}\leq V_K$$
so that for $K$ nonpluripolar, $V_{C,K}^*$ is a plurisubharmonic function (indeed, $V_{C,K}^*\in L(\C^d)$). Furthermore, 
$$C\subset C' \ \hbox{implies} \ V_{C,K}\leq V_{C',K}$$ which follows from the facts that  
$\Poly(nC)\subset \Poly(nC')$ and $p\in \Poly(nC)$ implies $\deg_C(p)\geq \deg_{C'}(p)$.

\begin{remark} From the definition (\ref{def-V}), for $K\subset \C^d$ compact we have $V_{C,K}=V_{C,\hat K}$ where $\hat K$ is the polynomial hull of $K$ (recall (\ref{phull})). Let $C$ be the closure of an open, connected set satisfying (\ref{stdhyp}). We remark that 
\begin{equation}\label{polyhull} Z(K):=\{z\in \C^d: V_{C,K}(z)=0\}=\hat K. \end{equation} To see this, since $V_{C,K}\geq 0$ we clearly have $\hat K \subset Z(K)$. For the reverse inclusion, if $z_0\not \in \hat K$ there exists a polynomial $p$ with 
$$\frac{|p(z_0)|}{\|p\|_K}=:1+\lambda>1.$$ 
By (\ref{stdhyp}), for each positive integer $k$, $p^k\in Poly(n_kC)$ for some positive integer $n_k={\rm deg}_C(p^k)$ with $n_k\uparrow \infty$ and 
$$
n_{k}\leq\deg_{\eps\Sigma}(p^{k})\leq k~\deg_{\eps\Sigma}(p).
$$
Thus 
$$\limsup_{k\to \infty} \frac{1}{n_k} \log \frac{|p^k(z_0)|}{\|p^k\|_K}\geq 
\limsup_{k\to \infty} \frac{1}{ k \cdot {\rm deg}_{\eps\Sigma}(p)}\log (1+\lambda)^k
=\frac{1}{ {\rm deg}_{\eps\Sigma}(p)}\log (1+\lambda)>0$$
so that $V_{C,K}(z_0)>0$. \end{remark}

One class of compact sets $K$ for which $C-$extremal functions can be computed are products of planar compacta. 
For $E\subset \C$ a planar compacta, let $g_{E}$ be the classical Green function of (the unbounded component of the complement of) $E$.

\begin{proposition}\label{easy} Let $C\subset \Sigma$ be a connected set containing $C_{0}$ in (\ref{ceez}). For $K=E_1\times \cdots \times E_d$, a product of regular, planar compacta $E_j$, we have
$$V_{C,K}(z)=V_{\Sigma,K}(z)=\max_{j=1,...,d} g_{E_j}(z_j).$$
In particular, this holds for $C$ the closure of an open, connected set satisfying (\ref{stdhyp}). 
\end{proposition}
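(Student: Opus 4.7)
The plan is to sandwich $V_{C,K}(z)$ between the classical extremal function $V_{\Sigma,K}(z)$ from above and $\max_{j}g_{E_{j}}(z_{j})$ from below, and then invoke the Siciak--Zahariuta product formula to see that the two bounds coincide.

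For the upper bound, the inclusion $C\subset\Sigma$ together with the monotonicity property $C\subset C'\Rightarrow V_{C,K}\le V_{C',K}$ (recorded just after (\ref{vgevc})) gives immediately
$$V_{C,K}(z)\le V_{\Sigma,K}(z).$$
The identity $V_{\Sigma,K}(z)=\max_{j}g_{E_{j}}(z_{j})$ on a product of regular planar compacta is the standard Siciak--Zahariuta product formula, which I would simply cite.

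The main step is the matching lower bound $\max_{j}g_{E_{j}}(z_{j})\le V_{C,K}(z)$, and this is where the hypothesis $C_{0}\subset C$ enters. Fix $j\in\{1,\ldots,d\}$ and a univariate polynomial $q(\zeta)=\sum_{i=0}^{n}a_{i}\zeta^{i}$ with $\|q\|_{E_{j}}\le1$. Viewed as a polynomial in the $d$ variables $z=(z_{1},\ldots,z_{d})$ through $q(z_{j})$, only monomials with exponent vector $i e_{j}$, $0\le i\le n$, appear, and these all lie in $nC_{0}\subset nC$; hence $q\in \Poly(nC)$. Since $K=E_{1}\times\cdots\times E_{d}$ is a product, $\|q\|_{K}=\|q\|_{E_{j}}\le 1$, and the definition (\ref{Phin}) gives
$$|q(z_{j})|\le\Phi_{n,C,K}(z).$$
Taking the supremum over such univariate $q$ of degree $\le n$ yields $\Phi_{n}^{E_{j}}(z_{j})\le\Phi_{n,C,K}(z)$, where $\Phi_{n}^{E_{j}}$ is the one-variable extremal quantity. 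Passing to $(1/n)\log$ and $\limsup_{n\to\infty}$, the classical one-variable Siciak formula (valid since $E_{j}$ is regular) produces $g_{E_{j}}(z_{j})\le V_{C,K}(z)$. Maximizing over $j$ closes the sandwich.

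There is no serious obstacle here; the argument is almost entirely bookkeeping once one notices that containing the axial segments $C_{0}$ is precisely what is needed to embed the entire machinery of univariate polynomial approximation, coordinate by coordinate, into the classes $\Poly(nC)$. The use of $\limsup$ rather than $\lim$ in (\ref{def-V}) is harmless because the inequality $\Phi_{n}^{E_{j}}(z_{j})\le\Phi_{n,C,K}(z)$ is term-by-term in $n$ and survives $\limsup$ on both sides.
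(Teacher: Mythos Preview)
Your proof is correct and follows essentially the same approach as the paper's: the upper bound from $C\subset\Sigma$ together with the classical product formula, and the lower bound by feeding univariate polynomials in a single coordinate into $\Phi_{n,C,K}$ via the inclusion $C_0\subset C$. The paper inserts an intermediate quantity $\Psi_n$ built from sums $p_1(z_1)+\cdots+p_d(z_d)$ before specializing to a single coordinate, but this extra step is not needed for the argument and your direct version is a mild streamlining.
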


\begin{proof} It is classical that $V_{\Sigma,K}(z)=\max_{j=1,...,d} g_{E_j}(z_j)$ (cf., Theorem 5.1.8 \cite{K}). Since $C\subset \Sigma$, clearly 
$V_{C,K}(z)\leq V_{\Sigma,K}(z)$. For the reverse inequality, for $n=1,2,...$ define
$$\Psi_n(z):=\sup\{|p(z)|:~p(z)=p_1(z_1)+\cdots +p_d(z_d),~\deg~ p_i \leq n,~\|p\|_{K}\leq 1\}
$$
(here $\deg=\deg_{\Sigma}$) and 
$$V(z):=\limsup_{n\to \infty} \frac{1}{n}\log \Psi_n(z).$$
Since $C_{0} \subset C$, $\Psi_n(z)\leq \Phi_n(z)$ in (\ref{Phin}) and hence $V(z)\leq V_{C,K}(z)$. Fixing $j\in \{1,...,d\}$ and taking $p_k\equiv 0$ for $k\not =j$, we have
$$\Psi_n(z)\geq \sup\{|p_j(z_j)|: \deg~ p_j \leq n, \ \|p_j\|_{E_j}\leq 1\}.$$
Thus
$$V(z)\geq \limsup_{n\to \infty} \bigl(\frac{1}{n}\log \sup\{|p_j(z_j)|: \deg~ p_j \leq n, \ \|p_j\|_{E_j}\leq 1\}\bigr)=g_{E_j}(z_j).$$
This holds for $j=1,...,d$ and the proposition is proved.

\end{proof}

\begin{remark}\label{imp} Since, in this setting, 
$$\sup\{|p_{j}(z_{j})|:~\deg p_{j}\leq n,~\|p_{j}\|_{E_{j}}\}\leq\Psi_n(z)\leq \Phi_n(z),$$ 
and
$$
\lim_{n\to\infty}\frac1n\log\sup\{|p_{j}(z_{j})|:~\deg p_{j}\leq n,~\|p_{j}\|_{E_{j}}\}=g_{E_{j}}(z_{j})
$$
we have a true limit in (\ref{def-V}). Moreover, the proof of Proposition \ref{easy} shows that for $K$ compact in $\C^d$, if we 
let $E_j = \pi_j (K)$ where $\pi_j$ is the projection from $z=(z_1,...,z_d)$ to the $j-$th coordinate $z_j$, then 
$$V_{\Sigma,K}^*(z) \geq V_{C,K}^*(z) \geq \max_{j=1,...,d} g_{E_j}(z_j).$$
We mention that for $C$ convex and $K=E_1\times \cdots \times E_d$, a product of regular, planar compacta $E_j$, we have (cf., \cite[Prop. 2.4]{BosLev})
\begin{equation}\label{tabform}V_{C,K}(z_1,\ldots,z_d)= \phi_C(g_{E_1}(z_1),\ldots,g_{E_d}(z_d))\end{equation}
where $\phi_C$ is defined in (\ref{phic}).

\end{remark}

Examples of nonconvex $C$ satisfying the hypotheses of Proposition \ref{easy} are the $l^p$ balls   
$$C_p=\{(x_1,\ldots,x_d)\in (\R^+)^d: x_1^p+\cdots + x_d^p\leq 1\}$$
from (\ref{ceepee}) with $0<p<1$. Note Proposition \ref{easy} is also valid in the limiting case $p=0$. 

\subsection{$L^2-$approach to $V_{C,K}$} We next discuss several ways of recovering $V_{C,K}$ in this non-convex setting, motivated by the standard (and convex) settings. Often $L^2-$norms are more convenient to work with than $L^{\infty}-$norms. To this end, for $K$ a compact set in $\C^d$ and $\tau$ a positive Borel measure on $K$, we say that $(K,\tau)$ satisfies a {\it Bernstein-Markov property} if 
for any polynomial $p_{n}$ of degree $n$ and any $n$
\begin{equation} \label{wtdbm}\|p_n\|_K\leq M_n \|p_n\|_{\tau} \ \hbox{where} \ \limsup_{n\to \infty} M_n^{1/n}=1.\end{equation} From our hypothesis (\ref{stdhyp}), this is equivalent to (\ref{wtdbm}) for $p_n \in \Poly(nC)$. For simplicity, we assume $\tau(K)=1$.

In the standard pluripotential setting of $C=\Sigma$, let 
$$m_n=m_n(\Sigma)=\dim(\Poly(n\Sigma))= {d+n\choose n}=\OO(n^d).$$ 
We take a lexicographical ordering of the monomials $\{z^{\nu}\}_{|\nu|\leq n}$ in $\Poly(n\Sigma)$ and write these as $\{e_j(z)\}_{j=1}^{m_n}$. Let $\{p_j\}_{j=1,...,m_n}$ be a set of orthonormal polynomials of degree at most $n$ in $L^2(\tau)$ gotten by applying the Gram-Schmidt process to these monomials in $\Poly(n\Sigma)$. For each $n=1,2,...$ consider the corresponding Bergman kernel
$$S_n(z,\zeta):=\sum_{j=1}^{m_n} p_j(z)\overline{p_j(\zeta )}$$
and the restriction to the diagonal
\begin{equation}\label{snfcn}S_n(z,z)=\sum_{j=1}^{m_n} |p_j(z)|^2.\end{equation}
By the reasoning in \cite{BS}, we have the following.
\begin{proposition}\label{snfcnprop} Let $K\subset \C^d$ be compact and nonpluripolar and let $\tau$ be a probability measure on $K$ such that $(K,\tau)$ satisfies (\ref{wtdbm}). Then with $S_n(z,z)$ defined in (\ref{snfcn}), 
$$\lim_{n\to \infty} \frac{1}{2n}\log S_n(z,z) = V_{\Sigma,K}(z),\quad z\in\C^{d}.$$
If $V_{\Sigma,K}$ is continuous, the convergence is uniform on compact subsets of $\C^d$. 
\end{proposition}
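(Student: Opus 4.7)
The strategy is to sandwich $S_n(z,z)$ between two versions of the Bernstein extremal quantity $\Phi_n(z)$ from (\ref{Phin}), and then invoke the convergence $\frac{1}{n}\log \Phi_n \to V_{\Sigma,K}$ furnished by (\ref{usecdef}) in the convex case. First, I would record the reproducing-kernel extremal characterization
$$\sqrt{S_n(z,z)}=\sup\{|p(z)|:~p\in \Poly(n\Sigma),~\|p\|_{L^2(\tau)}\leq 1\},$$
which follows from expanding an arbitrary $L^2(\tau)$-unit $p$ in the orthonormal basis $\{p_j\}$ and applying Cauchy--Schwarz, with equality realized by $p(\cdot)=S_n(\cdot,z)/\sqrt{S_n(z,z)}$ when $S_n(z,z)>0$ (nonpluripolarity of $K$ rules out degeneracy).

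Next, I would compare the $L^2(\tau)$ and $L^\infty(K)$ unit balls. Since $\tau(K)=1$, $\|p\|_{L^2(\tau)}\leq\|p\|_K$, so every $p\in\Poly(n\Sigma)$ with $\|p\|_K\leq 1$ is admissible for $\sqrt{S_n(z,z)}$, giving $\Phi_n(z)\leq\sqrt{S_n(z,z)}$. Conversely, the Bernstein--Markov property (\ref{wtdbm}) yields $\|p\|_K\leq M_n\|p\|_{L^2(\tau)}$ for $p\in\Poly(n\Sigma)$, so $\sqrt{S_n(z,z)}\leq M_n\Phi_n(z)$. Combining and taking logarithms,
$$\frac{1}{n}\log \Phi_n(z)\;\leq\;\frac{1}{2n}\log S_n(z,z)\;\leq\;\frac{1}{n}\log M_n+\frac{1}{n}\log \Phi_n(z).$$
Since $M_n^{1/n}\to 1$ and, for the convex body $C=\Sigma$, $\frac{1}{n}\log \Phi_n(z)\to V_{\Sigma,K}(z)$ pointwise on $\C^d$ by (\ref{usecdef}), the pointwise assertion follows immediately.

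For the uniform convergence statement when $V_{\Sigma,K}$ is continuous, I would note that each $u_n(z):=\frac{1}{2n}\log S_n(z,z)$ is plurisubharmonic on $\C^d$ (it is the logarithm of a sum of squared moduli of holomorphic polynomials), and the Bernstein--Walsh estimate (\ref{bwci}) together with the sandwich shows that the $u_n$ lie in $L(\C^d)$ with a uniform growth bound on any fixed compactum. A standard Hartogs-type argument then upgrades the pointwise convergence to the continuous limit $V_{\Sigma,K}$: the upper bound $\limsup u_n\leq V_{\Sigma,K}$ plus Hartogs' lemma for locally bounded psh functions gives $u_n\leq V_{\Sigma,K}+\epsilon$ on any compactum for $n$ large, while the lower inequality $u_n\geq \frac{1}{n}\log \Phi_n$ combined with the corresponding local-uniform convergence of $\frac{1}{n}\log \Phi_n$ to $V_{\Sigma,K}$ (itself the Hartogs-lemma upgrade of (\ref{usecdef})) supplies the matching lower bound.

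The argument is essentially mechanical once the reproducing-kernel extremality and Bernstein--Markov are in hand; the only mildly delicate point is the Hartogs step, which requires verifying that the family $\{u_n\}$ is locally uniformly bounded above, a consequence of $u_n\leq \frac{1}{n}\log M_n+\frac{1}{n}\log\Phi_n\leq \frac{1}{n}\log M_n + V_{\Sigma,K}$.
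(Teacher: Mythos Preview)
Your proof is correct and follows the same sandwich strategy as the paper's sketch: bound $S_n(z,z)$ above and below by powers of $\Phi_n(z)$, then invoke the known convergence $\frac{1}{n}\log\Phi_n\to V_{\Sigma,K}$. The one noteworthy difference is that you use the $L^2$-extremal characterization $\sqrt{S_n(z,z)}=\sup\{|p(z)|:\|p\|_\tau\le 1\}$ to get the sharper upper bound $\sqrt{S_n(z,z)}\le M_n\Phi_n(z)$, whereas the paper estimates each orthonormal $p_j$ individually and picks up an extra (harmless) factor $m_n$; with your tight sandwich the locally uniform convergence of $\frac{1}{n}\log\Phi_n$ (cited from Klimek) transfers directly, so the Hartogs step you outline is in fact unnecessary.
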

\begin{proof}[Sketch of proof]
We briefly indicate the two main steps in the proof since these will be generalized. First, for each $n=1,2,...$ define
$$ \phi_n(z):=\sup \{ |p(z)|: p\in  \Poly(n\Sigma), \ \|p\|_K\leq 1\}.$$
Then, from \cite[Theorem 5.1.7]{K},
\begin{equation}\label{phin} \lim_{n\to \infty} \frac{1}{n}\log \phi_n(z) = V_{\Sigma,K}(z)\end{equation} 
pointwise on $\C^d$; and the convergence is uniform on compact subsets of $\C^d$ if $V_{\Sigma,K}$ is continuous. This is \cite{BS}, Lemma 3.4. The next step is a comparison between $\phi_n(z)$ and $S_n(z,z)$: 
\begin{equation} \label{step2} 1 \leq \frac{S_n(z,z)}{\phi_n(z)^2}\leq M_n^{2}m_n, \ z\in \C^d, \end{equation}
where $M_n$ is as in (\ref{wtdbm}). The left-hand inequality follows from the reproducing property of the Bergman kernel $S_n(z,\zeta)$ and the Cauchy-Schwarz inequality and is valid for any $\tau$ for which one has an orthonormal basis in $L^2(\tau)$ for $\Poly(n\Sigma$). Let $p$ be a polynomial of degree at most $n$ with $\|p\|_K\leq 1$. Writing $p(z)=\sum_{j=1}^{m_n}a_j p_j(z)$, 
$$|p(z)|^2\leq \sum_{j=1}^{m_n}|a_j|^2 \cdot \sum_{j=1}^{m_n}|p_j(z)|^2 = \|p\|_{\tau}^2\cdot \sum_{j=1}^{m_n}|p_j(z)|^2 \leq \|p\|_K^2\cdot \sum_{j=1}^{m_n}|p_j(z)|^2\leq S_n(z,z).$$ 
Since $\phi_n(z)=\sup \{ |p(z)|: p\in  \Poly(n\Sigma), \ \|p\|_K\leq 1\}$, taking the supremum over all such $p$ gives the left-hand inequality. The right-hand inequality uses the Bernstein-Markov property of $(K,\tau)$. We have $\|p_j\|_K \leq M_n$ so that $|p_j(z)|/M_n\leq \phi_n(z)$ and
$$S_n(z,z)=\sum_{j=1}^{m_n}|p_j(z)|^2\leq m_n\cdot M_n^2\cdot [\phi_n(z)]^2.$$
\end{proof}
The exact same proof is valid for $C$ convex satisfying (\ref{sigmainkp}) (cf., Proposition 2.11 of \cite{Bay}). We note that the analogue of (\ref{phin}) in this setting uses the fact that 
$$\Poly(nC)\cdot \Poly(mC)\subset \Poly(n+m)C.$$
Given $C$, the closure of an open, connected set satisfying (\ref{stdhyp}) which contains $C_{0}$ in (\ref{ceez}), if we know for a given compact set that we have the pointwise limit
\begin{equation}\label{forPhin} \lim_{n\to \infty} \frac{1}{n}\log \Phi_n(z) =V_{C,K}(z), \end{equation}
i.e., the limit exists and equals $V_{C,K}(z)$, then the analogue of Proposition \ref{snfcnprop} holds in this setting, except perhaps for the local uniform convergence. We state this as Proposition \ref{touse}. Moreover, we get convergence in $L^1_{loc}(\C^d)$ as well. Here we use an ordering $\prec_C$ on $\N^d$ which respects $\deg_C(p)$ in the sense that $\alpha \prec_C \beta$ whenever $\deg_C(z^{\alpha}) < \deg_C(z^{\beta})$, and 
$$S_n^C(z,z)=\sum_{j=1}^{m_n} |p_j(z)|^2$$
where $\{p_j\}_{j=1,...,m_n}$ is an orthonormal basis in $L^2(\tau)$ for $\Poly(nC)$ constructed via Gram-Schmidt applied to an ordered monomial basis $\{z^{\nu}\}$ of $\Poly(nC)$. Here 
$$m_n=m_n(C)=\hbox{dim}(\Poly(nC)).$$ For the $L_{loc}^1(\C^d)$ convergence, we will use the following standard result; this will also be needed in section 4. The proof is identical to that of Proposition 4.4 of \cite{blrp}.

\begin{proposition} \label{thmdet} Let $\{\psi_n\}$ be a locally uniformly bounded above family of plurisubharmonic functions on $\C^d$. Suppose for any subsequence $J$ of positive integers we have 
$$\bigl(\limsup_{n\in J}\psi_n(z)\bigr)^*=V(z)$$
for all $z\in \C^d$ where $V\in PSH(\C^d)\cap L^{\infty}_{loc}(\C^d)$. Then $\psi_n \to V$ in $L_{loc}^1(\C^d)$.
\end{proposition}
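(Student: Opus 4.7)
The plan is to argue by a compactness-plus-uniqueness strategy, using standard properties of plurisubharmonic functions under $L^{1}_{loc}$ limits. The overall scheme is: show that the sequence $\{\psi_n\}$ is relatively compact in $L^1_{loc}(\C^d)$, then show that $V$ is the only possible cluster value; since a relatively compact sequence with a unique cluster point converges to that point, we will be done.

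First, because $\{\psi_n\}$ is locally uniformly bounded above and consists of psh functions on $\C^d$, the classical compactness theorem for psh functions (see e.g.\ H\"ormander's or Klimek's monograph) applies: any subsequence of $\{\psi_n\}$ admits a further subsequence which either tends to $-\infty$ locally uniformly on $\C^d$, or converges in $L^1_{loc}(\C^d)$ to some psh function. I will rule out the first alternative: if along a subsequence $J$ we had $\psi_n\to-\infty$ locally uniformly, then $(\limsup_{n\in J}\psi_n)^{*}\equiv-\infty$, contradicting the hypothesis that this regularization equals $V\in PSH(\C^d)\cap L^{\infty}_{loc}(\C^d)$.

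Next, take any subsequence $J'\subset\N$ along which $\psi_n\to U$ in $L^1_{loc}(\C^d)$; by the above, $U$ is (a.e.\ equal to) a psh function, which I will identify with its upper semicontinuous representative. I then invoke the standard pointwise identification for $L^{1}_{loc}$-limits of locally uniformly bounded above psh sequences:
\begin{equation*}
U(z)=\bigl(\limsup_{n\in J',\,n\to\infty}\psi_n(z)\bigr)^{*},\qquad z\in\C^{d}.
\end{equation*}
By the hypothesis applied to the subsequence $J'$, the right-hand side equals $V(z)$, so $U=V$.

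Thus every $L^{1}_{loc}$-convergent subsequence of $\{\psi_n\}$ has the same limit $V$, and combined with the ruling-out of the $-\infty$ alternative, this yields $\psi_n\to V$ in $L^1_{loc}(\C^d)$ via a standard subsequence-of-subsequence argument. The only substantive step is the pointwise identification $U=(\limsup \psi_n)^*$, which is the main technical ingredient borrowed from pluripotential theory; the rest of the argument is soft compactness. This is why the proof is identical, mutatis mutandis, to Proposition 4.4 of \cite{blrp}.
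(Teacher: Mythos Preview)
Your proposal is correct and follows precisely the standard compactness-plus-uniqueness argument that the paper has in mind: the paper does not actually give a proof but simply states that it is identical to that of Proposition~4.4 of \cite{blrp}, and your writeup reproduces exactly that argument (relative compactness of locally uniformly bounded above psh families in $L^1_{loc}$, exclusion of the $-\infty$ alternative via the hypothesis, and identification of any $L^1_{loc}$ subsequential limit with $(\limsup\psi_n)^*=V$).
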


\begin{proposition}\label{touse} Let $C$ be the closure of an open, connected set satisfying (\ref{stdhyp}). Let $K\subset \C^d$ be compact, nonpluripolar and satisfying (\ref{forPhin}). Finally, let $\tau$ be a positive Borel measure on $K$ such that $(K,\tau)$ satisfies (\ref{wtdbm}). Then the sequences $\{{1\over 2n}\log S^C_n\}$ and $\{{1\over n}\log \Phi_n\}$ are locally uniformly bounded above and 
$$ \lim_{n\to \infty} \frac{1}{2n}\log S_n^C(z,z) = V_{C,K}(z)$$
pointwise on $\C^d$. Furthermore, both sequences $\{{1\over 2n}\log S^C_n\}$ and $\{{1\over n}\log \Phi_n\}$ converge to $V_{C,K}^*$ in $L^1_{loc}(\C^d)$.
 
\end{proposition}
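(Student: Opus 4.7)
The plan is to transcribe the two-step argument of Proposition \ref{snfcnprop} almost verbatim, noting that neither step actually uses convexity of $C$ once the pointwise limit hypothesis (\ref{forPhin}) is in place. First I would establish the Bergman-sup double inequality
$$1 \;\leq\; \frac{S_n^C(z,z)}{\Phi_n(z)^2} \;\leq\; m_n M_n^2, \qquad z \in \C^d.$$
The left-hand inequality is obtained by writing any $p \in \Poly(nC)$ with $\|p\|_K \leq 1$ as $p = \sum_{j=1}^{m_n} a_j p_j$ in the orthonormal basis, applying Cauchy--Schwarz to get $|p(z)|^2 \leq \|p\|_\tau^2 S_n^C(z,z) \leq S_n^C(z,z)$, and then taking the supremum over all such $p$. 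The right-hand inequality uses the Bernstein--Markov bound $\|p_j\|_K \leq M_n \|p_j\|_\tau = M_n$, which yields $|p_j(z)|/M_n \leq \Phi_n(z)$ and thus $S_n^C(z,z) \leq m_n M_n^2 \Phi_n(z)^2$. Crucially, neither direction uses $\Poly(nC)\cdot \Poly(mC) \subset \Poly((n+m)C)$, so convexity of $C$ plays no role.

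Next, I would verify local uniform upper boundedness and extract the pointwise limit. Since $\delta = 1$ in (\ref{stdhyp}) gives $C \subset \Sigma$, we have $\Poly(nC) \subset \Poly(n\Sigma)$ and hence $\Phi_n \leq \phi_n$, with $\phi_n$ the analogous quantity for $\Sigma$. The classical Bernstein--Walsh inequality then forces $\frac{1}{n}\log \Phi_n(z) \leq V_{\Sigma,K}^*(z)$, which is locally bounded above because $K$ is nonpluripolar. Combined with the right half of the double inequality and the estimates $m_n \leq \binom{n+d}{d} = O(n^d)$ and $M_n^{1/n} \to 1$, this yields the corresponding local upper bound for $\frac{1}{2n}\log S_n^C$. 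Taking $\frac{1}{2n}\log$ of the double inequality also produces
$$\frac{1}{n}\log \Phi_n(z) \;\leq\; \frac{1}{2n}\log S_n^C(z,z) \;\leq\; \frac{1}{n}\log \Phi_n(z) + \frac{1}{2n}\log(m_n M_n^2),$$
whose error term is $o(1)$ for the same reasons, so that the hypothesis (\ref{forPhin}) delivers the pointwise limit $\frac{1}{2n}\log S_n^C(z,z) \to V_{C,K}(z)$.

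For the $L^1_{loc}$ conclusion I would invoke Proposition \ref{thmdet} with target $V = V_{C,K}^*$ applied to each of the two sequences $\psi_n = \frac{1}{n}\log \Phi_n$ and $\psi_n = \frac{1}{2n}\log S_n^C(\cdot,\cdot)$. Both consist of plurisubharmonic functions (the Bergman diagonal $\log \sum_j |p_j|^2$ is manifestly psh, while $\log \Phi_n$ is psh after the standard upper semicontinuous regularization of a sup of $\log|p|$ indexed by a compact subset of $\Poly(nC)$) and both are locally uniformly bounded above by the step just above. Since the whole sequence already has pointwise limit $V_{C,K}$, every subsequence $J$ satisfies $(\limsup_{n \in J} \psi_n)^* = V_{C,K}^*$, and $V_{C,K}^* \in L^\infty_{loc}(\C^d)$ because $V_{C,K}^* \leq V_{\Sigma,K}^*$. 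The only mild obstacle is bookkeeping around the usc regularization of $\Phi_n$ so that $\frac{1}{n}\log \Phi_n$ is genuinely psh, but this is harmless for $L^1_{loc}$ convergence because $\Phi_n$ and its regularization agree off a pluripolar, hence measure-zero, set.
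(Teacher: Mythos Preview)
Your proposal is correct and follows essentially the same route as the paper: establish the double inequality $1 \leq S_n^C/\Phi_n^2 \leq m_n M_n^2$ exactly as in Proposition \ref{snfcnprop}, deduce the local uniform upper bound via $C\subset\Sigma$ and the pointwise limit from (\ref{forPhin}), and then invoke Proposition \ref{thmdet} for the $L^1_{loc}$ statements. The paper handles your ``mild obstacle'' in the same way you anticipate: it passes to $\frac{1}{n}\log \Phi_n^*$, observes that this agrees with $\frac{1}{n}\log \Phi_n$ off a pluripolar set for each $n$, so a countable union argument gives $\lim_n \frac{1}{n}\log \Phi_n^* = V_{C,K}$ off a pluripolar set and hence $[\lim_n \frac{1}{n}\log \Phi_n^*]^* = V_{C,K}^*$ everywhere, which is exactly the hypothesis needed for Proposition \ref{thmdet}.
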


\begin{proof} The analogue of (\ref{step2}) holds with $S_n, \phi_n, m_n=m_n(\Sigma)$ replaced by $S_n^C, \Phi_n, m_n=m_n(C)$ with the exact same proof:
\begin{equation}\label{step2b} 1 \leq \frac{S^C_n(z,z)}{\Phi_n(z)^2}\leq M_n^{2}m_n.\end{equation}
Under the hypothesis (\ref{forPhin}), the pointwise convergence of $\frac{1}{2n}\log S_n^C(z,z)$ to  $V_{C,K}(z)$ follows. Moreover, from (\ref{step2b}) and $C\subset \Sigma$
\begin{equation}\label{unifbd}\frac{1}{2n}\log S_n^C(z,z) \leq \frac{1}{n}\log \Phi_n(z)+ \frac{1}{2n}\log (M_n^{2}m_n)\leq V_K^*(z)+\limsup_{n\to \infty}\frac{1}{2n}\log (M_n^{2}m_n)=V_K^*(z)\end{equation}
which shows the sequences $\{{1\over 2n}\log S^C_n\}$ and $\{{1\over n}\log \Phi_n\}$ are locally uniformly bounded above. Proposition \ref{thmdet} immediately shows
$$\frac{1}{2n}\log S_n^C \to V_{C,K}^* \ \hbox{in} \ L^1_{loc}(\C^d).$$
Finally, for each $n$, the function ${1\over n}\log \Phi_n^*$ is psh and is equal to ${1\over n}\log \Phi_n$ except perhaps for a pluripolar set. Since a countable union of pluripolar sets is pluripolar,
$$\lim_{n\to \infty} {1\over n}\log \Phi_n^*(z)=\lim_{n\to \infty} {1\over n}\log \Phi_n (z)=V_{C,K}(z)$$
outside of a pluripolar set. Hence $[\lim_{n\to \infty} {1\over n}\log \Phi_n^*(z)]^*=V_{C,K}^*(z)$ for all $z\in \C^d$. By Proposition \ref{thmdet}, ${1\over n}\log \Phi_n^*\to V_{C,K}^* \ \hbox{in} \ L^1_{loc}(\C^d)$ and hence the same is true for  $\{{1\over n}\log \Phi_n\}$. \end{proof}

From Remark \ref{imp}, the full conclusion of Proposition \ref{touse} holds if $C\subset \Sigma$ contains $C_{0}$ in (\ref{ceez}) and $K=E_1\times \cdots \times E_d$, a product of regular, planar compacta $E_j$. In fact, we get slightly more, namely local uniform convergence, for certain Bernstein-Markov measures on $K$. Suppose $\mu_j$ is a Bernstein-Markov measure on $E_j$ for $j=1,...,d$. Then $\mu:=\otimes_{j=1}^d \mu_j$ is a Bernstein-Markov measure on $K$. If $\{e^{(j)}_k\}$ is an orthonormal basis of polynomials for $L^2(\mu_{E_j})$ with $deg(e^{(j)}_k)=k$, then we know that 
$$\lim_{n\to \infty} \frac{1}{2n} \log   \sum_{k=0}^n |e^{(j)}_k(z_j)|^2 =g_{E_j}(z_j)$$
locally uniformly on $\C=\C_{z_j}$. Then, we have
\begin{proposition}\label{imp2}
The $n-$th Bergman function 
$$B_n^{\mu,C}(z):=S_n^{C}(z,z)$$ for $\mu$ associated to the vector space $\Poly(nC)$ satisfies
\begin{equation}\label{locunif} 
\lim_{n\to \infty} \frac{1}{2n} \log B_n^{\mu,C}(z_1,...,z_d)= \max_{j=1,...,d}g_{E_j}(z_j)
\end{equation}
locally uniformly on $\C^d$ (and hence in $L^1_{loc}(\C^d)$). 
\end{proposition}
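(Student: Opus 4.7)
The plan is to sandwich $\frac{1}{2n}\log B_n^{\mu,C}$ between an upper and a lower bound that both tend to $\max_j g_{E_j}(z_j)$ locally uniformly on $\C^d$. Pointwise convergence to this limit is already in hand: by Remark \ref{imp} the hypothesis (\ref{forPhin}) holds for $K=E_1\times\cdots\times E_d$, so Proposition \ref{touse} applies with $V_{C,K}=V_{\Sigma,K}=\max_j g_{E_j}$ by Proposition \ref{easy}. What must be upgraded is the mode of convergence.

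For the upper bound, I would use $\Poly(nC)\subset \Poly(n\Sigma)$ (since $C\subset\Sigma$) to get $\Phi_{n,C,K}\leq \Phi_{n,\Sigma,K}$, and combine this with the right-hand inequality of (\ref{step2b}) applied to $(K,\mu)$:
$$
\frac{1}{2n}\log B_n^{\mu,C}(z)\;\leq\;\frac{1}{n}\log\Phi_{n,\Sigma,K}(z)\;+\;\frac{1}{2n}\log(M_n^{2}m_n).
$$
Since $K$ is a product of regular planar compacta, the function $V_{\Sigma,K}(z)=\max_j g_{E_j}(z_j)$ is continuous, so by \cite[Theorem 5.1.7]{K} the first term on the right tends locally uniformly to $\max_j g_{E_j}(z_j)$; the second term tends to $0$ because $\limsup M_n^{1/n}=1$ and $m_n=\OO(n^d)$.

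For the lower bound, I would exploit $C_0\subset C$: the lattice points $(0,\ldots,k,\ldots,0)$ with $0\leq k\leq n$ lie in $nC_0\cap\N^d\subset nC\cap\N^d$, so for each $j$ the subspace $V_j^{(n)}\subset\Poly(nC)$ of polynomials in $z_j$ alone of degree at most $n$ is available. Because $\mu=\otimes_i\mu_i$ is a product of probability measures, the $L^2(\mu)$ inner product restricted to $V_j^{(n)}$ agrees with the $L^2(\mu_j)$ inner product on univariate polynomials, hence $\{e^{(j)}_k\}_{k=0}^n$ is $L^2(\mu)$-orthonormal inside $V_j^{(n)}$. The extremal characterization of Bergman kernels,
$$
B_n^{\mu,C}(z)=\sup\bigl\{|f(z)|^{2}: f\in\Poly(nC),\ \|f\|_{\mu}\leq 1\bigr\},
$$
is monotone in the subspace, so $B_n^{\mu,C}(z)\geq \sum_{k=0}^n|e^{(j)}_k(z_j)|^{2}$ for every $j$. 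Taking the maximum over $j$ and invoking the given locally uniform convergence on $\C$ of $\frac{1}{2n}\log\sum_{k=0}^n|e^{(j)}_k(z_j)|^{2}$ to $g_{E_j}(z_j)$ yields
$$
\liminf_{n\to\infty}\frac{1}{2n}\log B_n^{\mu,C}(z)\;\geq\;\max_j g_{E_j}(z_j)
$$
locally uniformly on $\C^d$, completing the sandwich.

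The delicate point is the product-measure identification of $\{e^{(j)}_k\}_{k=0}^n$ as an orthonormal system inside $L^2(\mu)$; this relies on the normalization $\mu_i(E_i)=1$, consistent with the paper's convention $\tau(K)=1$ (otherwise one absorbs harmless subexponential factors). Everything else reduces to results already recorded in this section together with the classical local uniform convergence of $\frac{1}{n}\log\Phi_{n,\Sigma,K}$ when $V_{\Sigma,K}$ is continuous; local uniform convergence immediately gives $L^{1}_{loc}$ convergence, yielding the parenthetical statement.
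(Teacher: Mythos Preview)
Your argument is correct and follows the same sandwiching strategy as the paper: an upper bound coming from $C\subset\Sigma$ and a lower bound coming from $C_0\subset C$, both converging locally uniformly to $\max_j g_{E_j}(z_j)$. The paper obtains the lower bound by computing $B_n^{\mu,C_0}$ explicitly (up to an additive constant it equals $\sum_{j}\sum_{k=0}^n|e_k^{(j)}(z_j)|^2$) and then applying the soft-max identity $\lim_{n}\frac{1}{2n}\log\sum_j e^{2na_j}=\max_j a_j$, whereas you use the extremal characterization of the Bergman kernel to compare $B_n^{\mu,C}$ directly with each univariate piece $\sum_{k=0}^n|e_k^{(j)}(z_j)|^2$; the two routes are equivalent.
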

\begin{proof}
We first consider the $n-$th Bergman function $B_n^{\mu,C_0}(z):=S_n^{C_0}(z,z)$ for 
$$\Poly(nC_0)=\hbox{span}\{1,z_1,...,z_d,z_1^2,...,z_d^2,...,z_1^n,...,z_d^n\}$$ 
where $C_0$ is the $p=0$ case of the $l^p$ ball $C_p$ in (\ref{ceez}). Then, assuming $\mu$ is a probability measure, we have 
$$d-1+B_n^{\mu,C_0}(z_1,...,z_d)= d+\sum_{j=1}^d\sum_{k=1}^n |e^{(j)}_k(z_j)|^2
=\sum_{j=1}^d\sum_{k=0}^n |e^{(j)}_k(z_j)|^2.$$
Now recall that for real $a_1,...,a_d$, 
\begin{equation}\label{estim}\lim_{n\to \infty} \frac{1}{2n}\log \left(e^{2na_1}+\cdots +e^{2na_d}\right)= \max_{j=1,...,d} a_j.\end{equation}
Taking sequences $\{a^{(j)}_n(z_j):=  \sum_{k=0}^n |e^{(j)}_k(z_j)|^2\}$ so that 
$$\lim_{n\to \infty}\frac{1}{2n} \log a^{(j)}_n(z_j) = a_j(z_j):=g_{E_j}(z_j), \ j=1,...,d$$
where the convergence is locally uniform in each $\C=\C_{z_j}$, using (\ref{estim}) we have
$$\lim_{n\to \infty} \frac{1}{2n}\log \left(\sum_{j=1}^d a^{(j)}_n(z_j)\right)= \max_{j=1,...,d} a_j(z_j)
$$
locally uniformly in $\C^d$, and thus
$$
\lim_{n\to \infty} \frac{1}{2n} \log B_n^{\mu,C_0}(z_1,...,z_d)
= \lim_{n\to \infty} \frac{1}{2n} \log (d-1+B_n^{\mu,C_0}(z_1,...,z_d))
=\max_{j=1,...,d}g_{E_j}(z_j)
$$
locally uniformly in $\C^d$, which is (\ref{locunif}) for $C=C_0$. The case $C=C_1=\Sigma$ follows from the more general Proposition \ref{snfcnprop}. For other $C$ as in Proposition \ref{easy}, $C_0\subset C \subset C_1$ which implies the inequality 
$$ B_n^{\mu,C_0}\leq B_n^{\mu,C} \leq B_n^{\mu,C_1}$$
and hence the general case of (\ref{locunif}).
\end{proof}

\begin{remark} Unlike the case where $C$ is convex, in the non-convex setting, it is unclear whether one has 
$$\lim_{n\to \infty} \frac{1}{n}\log \Phi_n(z) =V_{C,K}(z)$$
pointwise; and, even if this holds and $V_{C,K}$ is continuous, it is unclear whether the limit is locally uniform.  From Proposition \ref{imp2} and (\ref{step2b}), for $C$ the closure of an open, connected set satisfying (\ref{stdhyp}) which contains $C_{0}$ in (\ref{ceez}), all of these properties hold for $K=E_1\times \cdots \times E_d$ a product of regular, planar compacta $E_j$. 

\end{remark}

More generally, let $\mu$ be any positive measure on $K$ such that one can form orthonormal polynomials $\{p_{\alpha}\}$ using Gram-Schmidt on the monomials $\{z^{\alpha}\}$. As before we use an ordering $\prec_C$ on $\N^d$ which respects $\deg_C(p)$. The following argument of Zeriahi \cite{zer} is valid in this setting.

\begin{proposition} \label{threeone} Let $K\subset \C^d$ be compact and nonpluripolar and let $C$ be the closure of an open, connected set satisfying (\ref{stdhyp}). Then 
\begin{equation}\label{Zer}
\limsup_{|\alpha|\to \infty} \frac{1}{{\rm deg}_C(p_{\alpha})}\log|p_{\alpha}(z)|\geq V_{C,K}(z), \quad z\not\in \hat K.
\end{equation}
\end{proposition}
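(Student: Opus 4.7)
The plan is to transplant the Zeriahi-type argument to the non-convex $C$ setting. The skeleton has two inequalities, both independent of any Bernstein-Markov hypothesis on $\mu$: (a) a pointwise domination $\Phi_n(z)^2\leq \mu(K)\,S_n^C(z,z)$, and (b) the trivial bound $S_n^C(z,z)\leq m_n(C)\cdot\max_{\deg_C(z^\alpha)\leq n}|p_\alpha(z)|^2$ with $m_n(C)\leq \binom{n+d}{d}=O(n^d)$ (using $C\subset\Sigma$, i.e.\ $\delta=1$ in (\ref{stdhyp})). Combining (a) and (b), taking logs and dividing by $n$ kills the $\tfrac{1}{2n}\log(\mu(K)\,m_n(C))$ term in the limit, so one obtains
$$V_{C,K}(z)\;\leq\;\limsup_{n\to\infty}\frac{1}{n}\max_{\deg_C(z^\alpha)\leq n}\log|p_\alpha(z)|.$$

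For (a), fix $p\in\Poly(nC)$ with $\|p\|_K\leq 1$. Since the ordering $\prec_C$ respects $\deg_C$, a straightforward consequence of Gram-Schmidt is $\deg_C(p_\alpha)=\deg_C(z^\alpha)$, so $\{p_\alpha\}_{\deg_C(z^\alpha)\leq n}$ is an orthonormal basis of $\Poly(nC)$. Expanding $p=\sum c_\alpha p_\alpha$, Parseval and the trivial $L^2$--$L^\infty$ comparison give $\sum|c_\alpha|^2=\|p\|_\mu^2\leq \mu(K)\|p\|_K^2\leq\mu(K)$, and Cauchy-Schwarz then yields $|p(z)|^2\leq \mu(K)\,S_n^C(z,z)$; the supremum over admissible $p$ gives (a). Ingredient (b) is immediate from the definition of $S_n^C$ as a sum of $m_n(C)$ nonnegative terms. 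Note that none of this uses any growth properties of the $L^2$-norms of polynomials.

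The final step is where the hypothesis $z\notin\hat K$ enters: by (\ref{polyhull}), $V_{C,K}(z)>0$. Pick, for each large $n$, an index $\alpha(n)$ attaining the maximum above. Positivity of the limsup forces $\log|p_{\alpha(n)}(z)|\geq 0$ along a subsequence, and $|p_{\alpha(n)}(z)|\to\infty$ rules out $\{\alpha(n)\}$ being bounded (else $p_{\alpha(n)}(z)$ would take only finitely many values), so $|\alpha(n)|\to\infty$. Because $\deg_C(p_{\alpha(n)})\leq n$ and $\log|p_{\alpha(n)}(z)|\geq 0$, replacing $1/n$ by $1/\deg_C(p_{\alpha(n)})$ only enlarges the left-hand side, which proves the claim. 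The main (and only) subtle point is exactly this substitution: without $V_{C,K}(z)>0$ the inequality $\tfrac{1}{n}\leq \tfrac{1}{\deg_C(p_{\alpha(n)})}$ could reverse once multiplied by a negative logarithm, which is precisely why the conclusion is stated only off $\hat K$.
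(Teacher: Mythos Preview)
Your proof is correct and follows essentially the same Zeriahi-type route as the paper: bound $\Phi_n(z)$ by a polynomial-in-$n$ factor times $\max_{\alpha\in nC}|p_\alpha(z)|$, then use $V_{C,K}(z)>0$ off $\hat K$ to force the maximizing index to infinity and justify replacing $1/n$ by $1/\deg_C(p_{\alpha})$. The only technical variation is that you pass through $S_n^C(z,z)$ and apply Cauchy--Schwarz on the sum $\sum c_\alpha p_\alpha(z)$ (giving $\Phi_n(z)^2\leq \mu(K)\,m_n\max_\alpha|p_\alpha(z)|^2$), whereas the paper bounds each coefficient individually via $|c_\alpha|=|\int_K Q_n\bar p_\alpha\,d\mu|\leq \sqrt{\mu(K)}$ and then uses the triangle inequality (giving $|Q_n(z)|\leq m_n\sqrt{\mu(K)}\max_\alpha|p_\alpha(z)|$); your bound is actually sharper by a factor $\sqrt{m_n}$, though this is irrelevant after taking $n$-th roots. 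One small point of presentation: from ``$|p_{\alpha(n)}(z)|\to\infty$ along a subsequence'' you conclude ``$|\alpha(n)|\to\infty$''; strictly this gives unboundedness, so you should pass to a further subsequence with $|\alpha(n_{k_j})|\to\infty$ before making the final $1/n\leq 1/\deg_C$ substitution---the paper handles this by arguing $\deg_C(p_{\alpha_n})\to\infty$ directly by contradiction. This is cosmetic and does not affect correctness.
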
  

\begin{proof} Let $Q_n \in \Poly(nC)$ and $\|Q_n\|_K\leq 1$. From the property of the ordering $\prec_C$, we can write $Q_n=\sum_{\alpha \in nC}c_{\alpha}p_{\alpha}$. Then
$$|c_{\alpha}|=|\int_K Q_n\bar p_{\alpha} d\mu| \leq \int_K |\bar p_{\alpha}| d\mu\leq \sqrt{\mu(K)}$$
by Cauchy-Schwarz. Hence 
$$|Q_n(z)|\leq m_n \sqrt{\mu(K)} \max_{\alpha \in nC}|p_{\alpha}(z)|$$ where $m_n=$dim$(\Poly(nC))$.

Fix $z_0\in \C^d\setminus \hat K$ and let $\alpha_n\in nC$ be a multiindex with $\deg_C(p_{\alpha_n})$ largest such that 
$$|p_{\alpha_n}(z_0)|=\max_{\alpha \in nC}|p_{\alpha}(z_0)|.$$
We claim that taking any such sequence $\{\alpha_n=\alpha_n(z_0)\}_{n=1,2,...}$, 
$$\lim_{n\to \infty} \deg_C(p_{\alpha_n})=+\infty.$$
For if not, then by the above argument, there exists $A< \infty$ such that for any $n$ and any $Q_n\in \Poly(nC)$ with $\|Q_n\|_K\leq 1$,
$$|Q_n(z_0)|\leq m_n \sqrt{\mu(K)} \max_{\deg_C(p_{\alpha})\leq A }|p_{\alpha}(z_0)|=m_n M(z_0)$$
where $M(z_0)$ is independent of $n$. But then $\Phi_n(z_0)\leq m_n  M(z_0)$ so that
$$V_{C,K}(z_0)\leq \limsup_{n\to \infty} [\frac{1}{n}\log m_n +\frac{1}{n}\log M(z_0)]=0$$
which contradicts $z_0 \in \C^d\setminus \hat K$ from (\ref{polyhull}). We conclude that for any $z \in \C^d\setminus \hat K$, for any $n$ and any $Q_n\in \Poly(nC)$ with $\|Q_n\|_K\leq 1$,
$$\frac{1}{n}\log|Q_n(z)|\leq \frac{1}{n}\log m_n +\frac{1}{n} \log |p_{\alpha_n}(z)|$$
where we can assume $\deg_C(p_{\alpha_n})\uparrow \infty$. 
Thus, for such $z$, 
\begin{align*}
V_{C,K}(z) & \leq \limsup_{n \to \infty} \frac{1}{n}\log|p_{\alpha_n}(z)| \leq \limsup_{n \to \infty} \frac{1}{{\rm deg}_C(p_{\alpha_n})}\log|p_{\alpha_n}(z)|\\
& \leq \limsup_{|\alpha|\to \infty} \frac{1}{{\rm deg}_C(p_{\alpha})}\log|p_{\alpha}(z)|
\end{align*}
where we have used  $\deg_C(p_{\alpha_n})\leq n$.

\end{proof}

\begin{corollary}\label{vvc} Let $K\subset \C^d$ be compact and nonpluripolar and let $C$ be the closure of an open, connected set satisfying (\ref{stdhyp}). Then for any Bernstein-Markov measure $\mu$ for $K$, 
$$V_{C,K}(z)=\limsup_{|\alpha|\to \infty} \frac{1}{{\rm deg}_C(p_{\alpha})}\log|p_{\alpha}(z)|, \quad z\not\in \hat K.$$

\end{corollary}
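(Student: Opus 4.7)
The plan is straightforward: Proposition \ref{threeone} already supplies the inequality $\limsup_{|\alpha|\to\infty}\frac{1}{\deg_C(p_\alpha)}\log|p_\alpha(z)|\geq V_{C,K}(z)$ on $\C^d\setminus\hat K$ (in fact on all of $\C^d$), so I only need to establish the reverse inequality, and for this the Bernstein--Markov hypothesis is what supplies the missing input.

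For the $\leq$ direction, fix $z\in\C^d$ and let $n_\alpha:=\deg_C(p_\alpha)$. Since $\|p_\alpha\|_\mu=1$ by orthonormality and $p_\alpha\in\Poly(n_\alpha C)$, the Bernstein--Markov property (\ref{wtdbm}) (in its equivalent form for $\Poly(n_\alpha C)$, as noted after (\ref{wtdbm})) gives $\|p_\alpha\|_K\leq M_{n_\alpha}$ with $M_{n_\alpha}^{1/n_\alpha}\to 1$. On the other hand, the very definition (\ref{Phin}) of $\Phi_{n_\alpha}$ yields
$$|p_\alpha(z)|\leq \|p_\alpha\|_K\,\Phi_{n_\alpha}(z)\leq M_{n_\alpha}\,\Phi_{n_\alpha}(z).$$
Taking logarithms and dividing by $n_\alpha$,
$$\frac{1}{n_\alpha}\log|p_\alpha(z)|\leq \frac{1}{n_\alpha}\log M_{n_\alpha}+\frac{1}{n_\alpha}\log\Phi_{n_\alpha}(z).$$

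Next I would check that $n_\alpha\to\infty$ as $|\alpha|\to\infty$: the standing hypothesis (\ref{stdhyp}) gives $\epsilon\Sigma\subset C\subset\Sigma$, so any $\alpha\in n_\alpha C\subset n_\alpha\Sigma$ satisfies $|\alpha|\leq n_\alpha$ (in fact $|\alpha|\leq n_\alpha\leq|\alpha|/\epsilon$). Consequently passing to the limsup over $|\alpha|\to\infty$ is the same as a limsup along a subsequence $n_\alpha\to\infty$, and from the definition (\ref{def-V}) of $V_{C,K}$ together with $M_{n_\alpha}^{1/n_\alpha}\to 1$ I obtain
$$\limsup_{|\alpha|\to\infty}\frac{1}{\deg_C(p_\alpha)}\log|p_\alpha(z)|\leq \limsup_{n\to\infty}\frac{1}{n}\log\Phi_n(z)=V_{C,K}(z).$$
Combining with Proposition \ref{threeone} gives equality on $\C^d\setminus\hat K$.

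There is no serious obstacle here; the entire argument is a one-line application of Bernstein--Markov combined with the definitions of $\Phi_n$ and $V_{C,K}$. The only point requiring care is the passage from the index $|\alpha|\to\infty$ to $n_\alpha\to\infty$, which is handled by the lower bound $\epsilon\Sigma\subset C$ built into hypothesis (\ref{stdhyp}); without this, one would have to worry that the supremum of $n_\alpha$ might stay bounded along a subsequence of $\alpha$'s, but the comparison $|\alpha|\leq n_\alpha\leq|\alpha|/\epsilon$ rules this out.
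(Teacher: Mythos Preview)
Your proof is correct and follows essentially the same route as the paper's: invoke Proposition \ref{threeone} for the inequality $\geq$, and for $\leq$ use the Bernstein--Markov bound $\|p_\alpha\|_K\leq M_{n_\alpha}$ together with the definition of $\Phi_{n_\alpha}$ (equivalently, normalize $p_\alpha/\|p_\alpha\|_K$) and the definition (\ref{def-V}) of $V_{C,K}$. One small caveat: your parenthetical remark that Proposition \ref{threeone} gives the inequality ``in fact on all of $\C^d$'' is not correct---for $z$ in the interior of $\hat K$ the limsup can be strictly negative (e.g., $K$ the closed unit disk in $\C$, $\mu$ arclength on $\partial K$, $p_n(z)=z^n$, $|z|<1$)---but this does not affect the argument, since the corollary is stated only for $z\notin\hat K$.
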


\begin{proof} 
In view of Proposition \ref{threeone}, it is sufficient to prove that $V_{C,K}(z)$, $z\not \in\hat K$, is larger than the left-hand side of (\ref{Zer}). 
Assuming $\mu$ to be a probability measure, for the orthonormal polynomials $\{p_{\alpha}\}$, we have
$$1\leq\|p_{\alpha}\|_K \leq M_{{\rm deg}_C(p_{\alpha})} \ \hbox{and} \ M_{{\rm deg}_C(p_{\alpha})}^{1/{\rm deg}_C(p_{\alpha})}\to 1$$
using (\ref{stdhyp}) and the Bernstein-Markov property (\ref{wtdbm}). 
Thus
$$\lim_{|\alpha|\to \infty} \|p_{\alpha}\|_K^{1/{\rm deg}_C(p_{\alpha})}=1.$$
Consequently
$$
V_{C,K}(z)\geq\limsup_{|\alpha|\to \infty} \frac{1}{{\rm deg}_C(p_{\alpha})}\log\frac{|p_{\alpha}(z)|}
{\|p_{\alpha}\|_{K}}=
\limsup_{|\alpha|\to \infty} \frac{1}{{\rm deg}_C(p_{\alpha})}\log{|p_{\alpha}(z)|}.
$$

\end{proof}

\noindent By Proposition 3.1 of \cite{BLPW}, for any compact set $K\subset \C^d$ there exists a Bernstein-Markov measure $\mu$ for $K$. We will use Corollary \ref{vvc} in the next subsection.

\subsection{$K=B$, the complex Euclidean ball}

We first remark that some version of assumption (\ref{stdhyp}) seems natural in order that we have $\cup_n \Poly(nC) = \PP_{d}$. Moreover, if $\cup_n \Poly(nC) \varsubsetneq  \PP_{d}$, equality in (\ref{polyhull}) may fail. Indeed, let $C=C_0$ and consider $K=B:=\{z\in \C^d: |z_1|^2+ \cdots +|z_d|^2\leq 1\}$, the complex Euclidean ball in $\C^d$. Let $\mu$ be normalized surface area measure on $\partial B$ and let $\nu$ be normalized Haar measure on the torus 
$T:=\{z\in \C^d: |z_1|= \cdots = |z_d| = 1\}$. The monomials $z^{\alpha}=z_1^{\alpha_1}\cdots z_d^{\alpha_d}$ are orthonormal with respect to $\nu$ while the monomials $z_j^{\alpha_j}$ are orthogonal with respect to $\mu$ with 
$$a_{k}:=\|z_j^{k}\|^2_{\mu}=\frac{(d-1)!k!}{(d-1+k)!},\qquad j=1,\ldots,d.$$ 
We consider the $n-$th Bergman functions $B_n^{\mu,C_0}(z)$ and $B_n^{\nu,C_0}(z)$ for 
$$\hbox{Poly}(nC_0)=\hbox{span}\{1,z_1,...,z_d,z_1^2,...,z_d^2,...,z_1^n,...,z_d^n\}.$$ 
We have
$$B_n^{\mu,C_0}(z)= 1+a_{1}^{-1}(|z_1|^2+\cdots +|z_d|^2)+a_{2}^{-1}(|z_1|^4+\cdots +|z_d|^4)+\cdots+ a_{n}^{-1}(|z_1|^{2n}+\cdots +|z_d|^{2n})$$
and
$$B_n^{\nu,C_0}(z)= 1+(|z_1|^2+\cdots +|z_d|^2)+(|z_1|^4+\cdots +|z_d|^4)+\cdots +(|z_1|^{2n}+\cdots +|z_d|^{2n}).$$
Thus
\begin{equation}\label{numu} B_n^{\nu,C_0}(z)\leq B_n^{\mu,C_0}(z)\leq 
1+a_{n}^{-1}[B_n^{\nu,C_0}(z)-1].
\end{equation}
Similar to the proof of Proposition \ref{imp2}, we have
$$\lim_{n\to \infty} {1\over 2n}\log B_n^{\nu,C_0}(z)=\max [0,\log|z_1|,...,\log |z_d|]$$
locally uniformly for all $z\in \C^d$. From (\ref{numu}) we also have 
$$\lim_{n\to \infty} {1\over 2n}\log B_n^{\mu,C_0}(z)=\max [0,\log|z_1|,...,\log |z_d|]$$
locally uniformly for all $z\in \C^d$. However, the inequality (\ref{step2}) is valid for $C_0,B$ and $\mu$; in the above notation, since $\hbox{dim}(\hbox{Poly}(nC_0))=dn+1$,
$$1 \leq {B_n^{\mu,C_0}(z) \over \Phi_n(z)^2}\leq M_n^{2}(dn+1)$$
where
$$\Phi_n(z)=\Phi_{n,C_0,B}(z):=\sup \{|p(z)|: p\in \hbox{Poly}(nC_0), \ \|p\|_B\leq 1\}.$$
Hence $\lim_{n\to \infty} \frac{1}{n}\log \Phi_n(z)$ exists and equals $\max [0,\log|z_1|,...,\log |z_d|]$ as well. 

This shows 
\begin{equation}\label{vcob} V_{C_0,B}(z)=\max \left(0,\log|z_1|,...,\log |z_d|\right).\end{equation} 
In particular, 
$$Z(B)=\{z:V_{C_0,B}(z)=0\}=\{z: \max_{j=1,...,d} |z_j|\leq 1\}$$
so that $B =\hat B \varsubsetneq Z(B)$ when $d>1$. 

Moreover, given $z_0=(z_{0,1},...,z_{0,d})\not \in B$, writing $z_{0,j}=|z_{0,j}|e^{i\phi_j}$ and defining $p(z):=\sum_{j=1}^d e^{-2i\phi_j}z_{j}^2$, we have  
$$\|p\|_B\leq \max_{z \in B} \left(|z_1|^2+\cdots +|z_d|^2\right)=1$$
while
$$|p(z_0)|=|z_{0,1}|^2+\cdots +|z_{0,d}|^2>1.$$
Since $p\in \Poly(2C_0)$, this shows that if one defines
$$\tilde V_{C_0,B}(z):= \sup\{{1\over {\rm deg}_{C_0}(p)}\log |p(z)|:p\in \cup_n \hbox{Poly}(nC_0), \ \|p\|_B\leq 1\},$$
then $\tilde V_{C_0,B}(z)>0$ for all $z \not \in B$ so that $\tilde V_{C_0,B}\not = V_{C_0,B}$. Thus equality fails to hold in (\ref{vgevc}) for $C=C_0$ and $K=B$. 

Next we show that, unlike the case of product sets $K=E_1\times \cdots \times E_d$ in Proposition \ref{easy}, for $K=B$, the $C_p-$extremal functions for $0<p<1$ do not coincide with the $C_0-$ and $C_1=\Sigma -$extremal functions.

\begin{proposition} \label{propb} For $0<p<1$, we have 
$$V_{C_{0},B}(z)<V_{C_p,B}(z) < V_{C_{1},B}(z) $$
at certain points $z\in \C^d$.
\end{proposition}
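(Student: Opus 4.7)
The plan is to verify both strict inequalities at the single point $z_{0}:=(1,1,0,\ldots,0)\in\C^{d}$, which lies outside $B=\hat B$ since $|z_0|^2 = 2>1$. From (\ref{vcob}) we read off $V_{C_{0},B}(z_{0})=0$, and the classical formula $V_{\Sigma,B}(z)=\log^{+}|z|$ for the Euclidean ball gives $V_{C_{1},B}(z_{0})=\tfrac{1}{2}\log 2$, so it suffices to show $0<V_{C_p,B}(z_0)<\tfrac{1}{2}\log 2$.

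I would evaluate $V_{C_p,B}(z_0)$ via Corollary \ref{vvc}, taking as $\mu$ the normalized surface area measure on $\partial B$ (a Bernstein--Markov probability measure for $B$, being the pluricomplex equilibrium measure). Under $\mu$ the monomials are pairwise orthogonal with the explicit formula $\|z^{\alpha}\|_\mu^2 = (d-1)!\,\alpha!/(d-1+|\alpha|)!$, so the Gram--Schmidt orthonormal basis is $p_\alpha = z^\alpha/\|z^\alpha\|_\mu$. Since $z_{0,j}=0$ for $j\ge 3$, only multi-indices $\alpha=(a,b,0,\ldots,0)$ contribute to the limsup in Corollary \ref{vvc}, and for them $\deg_{C_p}(z^\alpha)=(a^p+b^p)^{1/p}$ and $|p_\alpha(z_0)|^2 = (d-1+a+b)!/((d-1)!\,a!\,b!)$.

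Applying Stirling's formula and writing $s:=a+b$, $\lambda:=a/s$, I would obtain
$$
\frac{1}{\deg_{C_p}(z^\alpha)}\log|p_\alpha(z_0)|
\;=\;\frac{H(\lambda)}{2\bigl(\lambda^p+(1-\lambda)^p\bigr)^{1/p}}+o(1)\qquad (s\to\infty),
$$
with $H(\lambda):=-\lambda\log\lambda-(1-\lambda)\log(1-\lambda)$. Running $(a,b)$ through integer sequences with $a/(a+b)\to\lambda$ for arbitrary $\lambda\in(0,1)$, Corollary \ref{vvc} then yields the exact identity
$$
V_{C_p,B}(z_0)\;=\;\sup_{\lambda\in(0,1)}f_p(\lambda),
\qquad f_p(\lambda):=\frac{H(\lambda)}{2\bigl(\lambda^p+(1-\lambda)^p\bigr)^{1/p}}.
$$

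From this formula both strict inequalities drop out. For the lower one, evaluate at $\lambda=1/2$: $f_p(1/2)=\log 2/2^{1/p}>0=V_{C_0,B}(z_0)$. For the upper one, the strict power-mean inequality gives $\lambda^p+(1-\lambda)^p>\lambda+(1-\lambda)=1$ on $(0,1)$ for $p<1$, so $\bigl(\lambda^p+(1-\lambda)^p\bigr)^{1/p}>1$ and hence $f_p(\lambda)<H(\lambda)/2\le\tfrac{1}{2}\log 2$ pointwise on $(0,1)$; since $f_p$ is continuous on $[0,1]$ with $f_p(0)=f_p(1)=0$, its supremum is attained at some interior point and is therefore strictly below $\tfrac{1}{2}\log 2=V_{C_1,B}(z_0)$. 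The main obstacle is the Stirling reduction to the one-variable optimization problem above; once that is cleanly justified, both comparisons reduce to elementary calculus on $f_p$.
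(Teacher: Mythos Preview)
Your argument is correct and in the same spirit as the paper's, but the execution is genuinely tighter. Both proofs invoke Corollary~\ref{vvc} with $\mu$ the surface measure on $\partial B$, reduce to monomials, and ultimately rely on the entropy bound $H(\lambda)\le\log 2$ via Stirling. The differences are in the choice of test point and in precision. The paper works (in $d=2$) along the diagonal $(z,z)$ with $|z|$ large, splits the monomials $z^aw^b\in\Poly(nC_p)$ into a ``central'' region $(a,b)\in n(1-\lambda)C_1$ and a ``boundary'' region near the axes, and shows neither region can reach the target $\tfrac12\log 2+\log|z|$; the large-$|z|$ device is needed there to separate the $(a+b)\log|z|$ contribution from the normalization constant. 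You instead pin down the single point $z_0=(1,1,0,\ldots,0)$ and obtain the closed formula $V_{C_p,B}(z_0)=\sup_{\lambda\in(0,1)}H(\lambda)/\bigl(2(\lambda^p+(1-\lambda)^p)^{1/p}\bigr)$, after which both inequalities are one-line calculus facts. This is cleaner and gives more: an explicit value, valid in any dimension $d$, at a fixed point.

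Two small remarks. First, $\deg_{C_p}(z^\alpha)$ is the integer $\lceil(a^p+b^p)^{1/p}\rceil$, not $(a^p+b^p)^{1/p}$ itself; this is harmless in your asymptotics but worth writing correctly. Second, for the first inequality the paper uses a shorter route you might mention: since $C_p$ satisfies~(\ref{stdhyp}) for $0<p<1$, equation~(\ref{polyhull}) gives $V_{C_p,B}(z_0)>0$ immediately from $z_0\notin\hat B=B$, with no computation needed. Your evaluation $f_p(1/2)=(\log 2)/2^{1/p}$ is still useful as an explicit lower bound.
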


\begin{proof} For simplicity we let $d=2$ and use variables $(z,w)$. As shown above, 
$$V_{C_0,B}(z,w)=\max [0,\log|z|, \log |w|]=0< V_{C_p,B}(z,w)$$
for $(z,w)\in Z(B)\setminus B\not = \emptyset$ and $0<p< 1$. We next verify for $0<p<1$ that 
$$V_{C_p,B}(z,w) \not = V_{C_1,B}(z,w)$$ 
for certain points. We utilize Corollary \ref{vvc}: taking $K=B\subset \C^2$, $\mu$ normalized surface area measure on $\partial B$, and $C=C_p$ for $0<p \leq 1$, 
\begin{equation}\label{goal}
V_{C_p,B}(z,w)=\limsup_{|\alpha|\to \infty} \frac{1}{{\rm deg}_{C_{p}}(p_{\alpha})}\log|p_{\alpha}(z,w)|, \quad (z,w)\not\in B.
\end{equation}
We look at points on the diagonal $w=z$ for $|z|$ large; indeed, we may consider any points $(z,w)$ with $|z|=|w|$ large. For $C_1=\Sigma$ we have $V_{C_1,B}(z,w)=\frac{1}{2}\log^+ (|z|^2+|w|^2)$ so for $|z|\geq 1/\sqrt 2$,
            $$V_{C_1,B}(z,z)=\frac{1}{2}\log (|z|^2+|z|^2)=\frac{1}{2}\log2 +\log |z|.$$
Since $\|z^aw^b\|^2_{L^2(\mu)}={a!b!}/{(a+b+1)!}$, at points $(z,z)$ with $|z|>1$ for $C_1$ we need the orthonormal monomials $\{z^aw^b/\|z^aw^b\|_{L^2(\mu)}\}$ with $a,b$ near equal to achieve this value $\frac{1}{2}\log2 +\log |z|$ using (\ref{goal}). Precisely, for $n$ large we need 
            $$\frac{1}{n}\log \|z^aw^b\|^{-1}_{L^2(\mu)}\to \frac{1}{2}\log2.$$
            For $a=b=n/2$ (we assume $n$ even for simplicity), using Stirling's formula,
            $$\|z^{n/2}w^{n/2}\|^{-1}_{L^2(\mu)}= {\sqrt {(n+1)!}\over (n/2)!}\asymp {\sqrt {(n/e)^n}\over (n/2e)^{n/2}}=2^{n/2}$$
            so that 
            $$\frac{1}{n}\log \|z^{n/2}w^{n/2}\|^{-1}_{L^2(\mu)}\asymp \frac{1}{n}\log {2^{n/2}}=\frac{1}{2}\log2$$
            as desired. 
            
If $0<p<1$, the only monomials $z^aw^b\in \Poly(nC_p)$ with $a+b$ ``near'' $n$ are ``near'' $z^n$ and $w^n$ (i.e., corresponding to integer lattice points near the coordinate axes); while those with $a,b$ near equal have $a+b$ ``well away'' from $n$ by concavity of the curve $x^p+y^p =n$. Fix $0< p<1$ and fix any $0<\lambda<1$.
For any monomial $z^aw^b$ with $(a,b)\in n(1-\lambda)C_1$ we have $a+b\leq n(1-\lambda)$ so that at a point $(z,z)$ the function $\frac{1}{n}\log |z^aw^b|$ takes the value
$$\frac1n\log|z|^{a+b}\leq\frac{1}{n}\log |z|^{n(1-\lambda)}=(1-\lambda)\log |z|.$$
Since 
$(1-\lambda)<1$, for points $(z,z)$ with $|z|$ sufficiently large, these monomials cannot approach $\log |z| + C$ regardless of $n$. 
            
For $\lambda$ sufficiently close to 1, any remaining monomials $z^aw^b\in \Poly(nC_p)$ with $(a,b)\in nC_p\setminus n(1-\lambda)C_1$ must have exponents close to $(n,0)$ or $(0,n)$. In the sequel we only consider the first case, the second case being identical. Then, there exists some $0<\lambda_{0}<1/4$, say, such that
\begin{equation}\label{three} 
a>n(1-\lambda_{0}) \ \hbox{and} \ b<n\lambda_{0} \ \hbox{with} \ a+b \leq n.
\end{equation}
            At a point $(z,z)$ with $|z|\geq 1$ the function $\frac{1}{n}\log |z^aw^b|$ has an upper bound of 
$$\frac{1}{n}\log |z^n|=\log |z|$$
so to complete our argument it suffices to show that 
$$\limsup_{n\to \infty} \frac1n\log\|z^{a}w^{b}\|^{-1}_{L^2(\mu)} < \frac{1}{2}\log2$$
where the $\limsup$ is taken over monomials $z^aw^b$ satisfying (\ref{three}). 
Estimating 
$$\|z^{a}w^{b}\|^{-2}_{L^2(\mu)} \simeq {(a+b)!\over a!b!}
\leq\frac{(n\lambda_{0}+1)\cdots n}{a!}
\leq {n!\over (n(1-\lambda_{0}))!(n\lambda_{0})!},$$
and using Stirling's formula,
$${n!\over (n(1-\lambda_{0}))!(n\lambda_{0})!}\asymp {(n/e)^{n}\over (n(1-\lambda_{0})/e)^{n(1-\lambda_{0})}(n\lambda_{0}/e)^{n\lambda_{0}}}
=(1-\lambda_{0})^{-n(1-\lambda_{0})}\lambda_{0}^{-n\lambda_{0}}.$$
Setting $L_n$ for this last expression, we have
$$\frac{1}{n} \log L_n 
=-\left((1-\lambda_{0})\log (1-\lambda_{0})+\lambda_{0} \log \lambda_{0}\right).$$
 We want to show that, for $\lambda_{0}<1/4$, this last quantity is smaller than $\log 2$.
The function $f(x):=-[(1-x)\log (1-x) +x\log x]$ is increasing on $(0,1/2)$, decreasing on $(1/2,1)$, and has a maximum at $x=1/2$ with $f(1/2) = \log 2$; this gives the result.  
\end{proof}

\section{Non-convex Bernstein-Walsh}

We continue to let $C$ be the closure of an open, connected set satisfying (\ref{stdhyp}). Given $K\subset \C^d$ compact, as in the convex setting for $f\in C(K)$ we define $$d_{n}^{C}(f,K)  =\inf_{p\in\Poly(nC)}\|f-p\|_{K}$$ where as before
\begin{align*}
\Poly(nC)=\{p(z) & =\sum_{J\in nC \cap \N^d}c_{J}z^{J}=\sum_{J\in nC \cap \N^d}c_{J}z_1^{j_1}\ldots z_d^{j_d},~c_{J}\in\C\},\quad n=1,2,\ldots\end{align*}
Note that the dimension of $\Poly(nC)$ is proportional to $\hbox{vol}(C) \cdot n^d$ where $\hbox{vol}(C)$ is the $d-$dimensional volume of $C$. In this section, we consider generalizations of Theorem \ref{BW}.

\begin{proposition} \label{oneway} Let $K\subset \C^d$ be compact with $V_{C,K}$ continuous and satisfying (\ref{forPhin}) locally uniformly in $\C^d$. Suppose for some $R>1$ we have $f\in C(K)$ which satisfies 
$$\limsup_{n\to \infty} [d_n^{C}(f,K)]^{1/n} \leq 1/R.$$
Then $f$ extends holomorphically to the open set
$$\Omega_{R,C}:=\{z \in \C^d: V_{C,K}(z) < \log R\}.$$
\end{proposition}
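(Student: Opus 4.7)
The plan is the standard Bernstein--Walsh telescoping argument, adapted to the $C$-degree setting. Fix a compact $D \subset \Omega_{R,C}$. By continuity of $V_{C,K}$ on $D$, pick $\eta > 0$ so that $V_{C,K} \leq \log R - 3\eta$ on $D$, and then choose $r \in (Re^{-2\eta}, R)$. The hypothesis $\limsup_n [d_n^{C}(f,K)]^{1/n} \leq 1/R < 1/r$ furnishes, for all $n$ large enough, polynomials $\tilde p_n \in \Poly(nC)$ with $\|f - \tilde p_n\|_K \leq r^{-n}$. Writing $q_n := \tilde p_{n+1} - \tilde p_n$ gives $\|q_n\|_K \leq 2r^{-n}$ and the telescoping identity $f = \tilde p_{n_0} + \sum_{n\geq n_0} q_n$ uniformly on $K$.

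The key structural point is that $q_n \in \Poly((n+1)C)$, which uses the inclusion $\Poly(nC) \subset \Poly((n+1)C)$. This holds whenever $C$ is star-shaped at the origin, covering the main examples $C = C_p$ for $0 < p \leq 1$ and $C = C_0$ of (\ref{ceez}) considered in the paper. Granted this, the Bernstein--Walsh-type inequality from the definition of $\Phi_n$ in (\ref{Phin}) gives $|q_n(z)| \leq \|q_n\|_K \,\Phi_{n+1}(z)$.

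Invoking the hypothesized locally uniform convergence $\tfrac{1}{n}\log\Phi_n \to V_{C,K}$, we have, for all sufficiently large $n$ and all $z \in D$, $\Phi_{n+1}(z) \leq e^{(n+1)(V_{C,K}(z)+\eta)} \leq e^{(n+1)(\log R - 2\eta)}$. Combining,
\[
|q_n(z)| \leq 2r^{-n}\, e^{(n+1)(\log R - 2\eta)} = 2Re^{-2\eta}\,\bigl(Re^{-2\eta}/r\bigr)^{n},
\]
which decays geometrically on $D$ because $r > Re^{-2\eta}$. Hence $\sum_n q_n$ converges uniformly on $D$, and $\tilde f := \lim_n \tilde p_n$ is a holomorphic function on $\Omega_{R,C}$ satisfying $\tilde f|_K = f$, which is the required holomorphic extension.

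The main subtlety is the containment $\Poly(nC) \subset \Poly((n+1)C)$ that underlies the degree bound $\deg_C(q_n) \leq n+1$: for non-star-shaped $C$ satisfying only (\ref{stdhyp}), the weaker bound $\deg_C(q_n) \leq \lceil (n+1)/\epsilon\rceil$ coming from $\epsilon\Sigma \subset C$ would only yield extension to the strictly smaller region $\{V_{C,K} < \epsilon \log R\}$. The statement as written is therefore cleanest in the star-shaped setting that contains all the concrete non-convex $C$ considered so far.
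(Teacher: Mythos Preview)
Your argument is correct and is essentially the same as the paper's: both use the asymptotic Bernstein--Walsh inequality obtained from the locally uniform convergence of $\tfrac{1}{n}\log\Phi_n$ to $V_{C,K}$, together with the telescoping series $\sum (p_{n}-p_{n-1})$ of near-best approximants. The parameter bookkeeping differs only cosmetically (your $\eta,r$ play the role of the paper's $\epsilon,\rho,R'$).

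Your final paragraph is a worthwhile addition: the paper writes ``$p_n-p_{n-1}\in\Poly(nC)$'' without comment, which indeed requires $\Poly((n-1)C)\subset\Poly(nC)$, i.e., $\tfrac{n-1}{n}C\subset C$ on lattice points. This is automatic when $C$ is star-shaped at the origin (in particular for all the $C_p$, $0<p\le 1$, treated in the paper) but is not a formal consequence of the standing hypothesis (\ref{stdhyp}) alone. So you have made explicit an assumption the paper uses tacitly.
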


\begin{proof} Under the assumption that 
\begin{equation}\label{locunif2}\lim_{n\to \infty} \frac{1}{n}\log \Phi_n(z) =V_{C,K}(z) \ \hbox{locally uniformly in} \ \C^d,\end{equation}
we have an {\it asymptotic Bernstein-Walsh inequality}: for any $E\subset \C^d$ compact and $\epsilon >0$, we have $n_0=n_0(\epsilon,K)$ so that
$$|p_n(z)|\leq \|p_n\|_Ke^{n(V_{C,K}(z)+\epsilon)}, \ z\in E$$
for any $p\in \Poly(nC)$ and $n>n_0$. This follows since 
$$\frac{1}{n}\log\frac{|p_n(z)|}{\|p_n\|_K}\leq \frac{1}{n} \log \Phi_n(z) \leq V_{C,K}(z)+\epsilon$$
for $z\in E$ and $n>n_0$ by (\ref{locunif2}). We use this to show that if $p_n\in Poly(nC)$ satisfies 
$d_n^{C}(f,K)=\|f-p_n\|_K$, then the series 
$p_0 + \sum_1^{\infty} (p_n-p_{n-1})$ converges
uniformly on compact subsets of $\Omega_{R,C}$ to a holomorphic
function $F$ which agrees with $f$ on
$K$.  To this end, choose $R'$ with 
$1< R' < R$; by hypothesis the polynomials
$p_n$ satisfy
\begin{equation}\label{f=F}
  \|f-p_n\|_K \leq {M \over  {R'} ^n}, \qquad n=0,1,2,...,
\end{equation}
for some $M > 0$. Let $\rho$ satisfy $1 < \rho < R' < R$. Fix $\epsilon >0$ sufficiently small so that $1 < \rho < \rho e^{\epsilon}< R'$, and apply the definition of $V_{C,K}$ and $\Omega_{\rho,C}$ with the asymptotic Bernstein-Walsh estimate on $E=\bar \Omega_{\rho,C}$ and the polynomial $p_n - p_{n-1}\in Poly(nC)$ to obtain 
$$
  \sup _{\bar \Omega_{\rho,C}} |p_n(z) - p_{n-1} (z)|
  \leq 
  \rho^n e^{n\epsilon}\|p_n-p_{n-1}\|_K$$
$$  \leq
  \rho^n e^{n\epsilon}( \|p_n -  f\|_K + \|f - p_{n-1}\|_K ) 
  \leq
  \rho^n  e^{n\epsilon}{M ( 1 + R' ) \over {R'} ^n}.
$$
Since $\rho$ and $R'$ were arbitrary
numbers satisfying $1 < \rho < R' < R$, we
conclude that $ p_0 + \sum_1^{\infty} (p_n-p_{n-1})$ 
converges locally uniformly on $\Omega_{R,C}$ to a holomorphic function $F$. 
From (\ref{f=F}), $F = f$ on $K$.
\end{proof}

The direct converse of Proposition \ref{oneway} is false, in general; simple examples can be constructed using product sets (use Propositions \ref{easy} and \ref{threeeight} (below)). Our goal is to determine what one can say in the opposite direction. If $F$ is holomorphic in a neighborhood of $K\subset \C^d$ compact, then $\limsup_{n\to \infty} d_n^{\Sigma}(F,K)^{1/n}<1$ and from (\ref{stdhyp}) we then have $\limsup_{n\to \infty} d_n^{C}(F,K)^{1/n}<1$. We will use the following lemma, which says that the asymptotic behavior of the rates of polynomial aproximation of $F\in C(K)$ by $\Poly(nC)$ in the sup norm on $K$ and in the $L^{2}$ norm with respect to a Bernstein-Markov measure (\ref{wtdbm}) on $K$ are the same, in this $n$-th root sense. 

\begin{lemma}\label{L2-rate}

Let $K\subset\C^{d}$ be compact and nonpluripolar. Let $\mu$ be a probability measure on $K$ which satisfies the Bernstein-Markov property (\ref{wtdbm}). Let $F\in C(K)$. Let $C$ be a compact, connected subset of $\R_{+}^{d}$ with nonempty interior, such that $a\Sigma\subset C$ for some $a>0$. Assume that
$$
\limsup_{n\to \infty}(d_{n}^{C}(F,K))^{1/n}=:\rho_{\infty}<1.
$$
If $\{p_{n}\}$ is a sequence of best $L^{2}_{\mu}$ approximants to $F$ with $p_n\in \Poly(nC)$ then
\begin{equation}\label{same-rat}
\limsup_{n \to \infty}\|F-p_{n}\|_{\mu}^{1/n}=\limsup_{n \to \infty}\|F-p_{n}\|_{K}^{1/n}=\rho_{\infty}.
\end{equation}
\end{lemma}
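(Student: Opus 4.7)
My plan is to compare $p_n$ to a best sup-norm approximant and use the Bernstein-Markov property to transfer $L^2_\mu$ information to sup-norm information on polynomial differences. Three of the relevant inequalities are essentially free. Since $\mu$ is a probability measure on $K$, $\|F-p_n\|_\mu \le \|F-p_n\|_K$. Since $p_n\in\Poly(nC)$, we have $\|F-p_n\|_K\ge d_n^C(F,K)$, giving $\limsup_n\|F-p_n\|_K^{1/n}\ge\rho_\infty$. And choosing a sup-norm best approximant $q_n\in\Poly(nC)$ with $\|F-q_n\|_K=d_n^C(F,K)$ (which exists by finite-dimensionality of $\Poly(nC)$), the $L^2_\mu$-optimality of $p_n$ yields $\|F-p_n\|_\mu\le\|F-q_n\|_\mu\le\|F-q_n\|_K=d_n^C(F,K)$, hence $\limsup_n\|F-p_n\|_\mu^{1/n}\le\rho_\infty$.

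To upgrade this, I first bound $\|F-p_n\|_K$ from above. Since $C$ is compact, $C\subset\delta\Sigma$ for some $\delta>0$, so any $p\in\Poly(nC)$ has ordinary degree at most $\lceil n\delta\rceil$; applying the Bernstein-Markov property (\ref{wtdbm}) to $p_n-q_n\in\Poly(nC)$ gives $\|p_n-q_n\|_K\le M_{\lceil n\delta\rceil}\|p_n-q_n\|_\mu$ with $M_{\lceil n\delta\rceil}^{1/n}\to 1$. Combining this with
$$\|p_n-q_n\|_\mu\le\|F-p_n\|_\mu+\|F-q_n\|_\mu\le 2\,d_n^C(F,K),$$
I obtain $\|F-p_n\|_K\le\|F-q_n\|_K+\|p_n-q_n\|_K\le (1+2M_{\lceil n\delta\rceil})\,d_n^C(F,K)$, so $\limsup_n\|F-p_n\|_K^{1/n}\le\rho_\infty$. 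Together with the trivial lower bound this gives $\limsup_n\|F-p_n\|_K^{1/n}=\rho_\infty$ and, as a byproduct, $p_n\to F$ uniformly on $K$.

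The last and most delicate step is $\limsup_n\|F-p_n\|_\mu^{1/n}\ge\rho_\infty$, and I would argue it by contradiction. Assume $\sigma:=\limsup_n\|F-p_n\|_\mu^{1/n}<\rho_\infty$, and fix $\epsilon>0$ small enough that $\sigma+3\epsilon<\rho_\infty$. Then $\|F-p_k\|_\mu\le(\sigma+\epsilon)^k$ for all large $k$, and applying Bernstein-Markov to each difference $p_k-p_{k-1}$ (of ordinary degree at most $\lceil k\delta\rceil$),
$$\|p_k-p_{k-1}\|_K\le M_{\lceil k\delta\rceil}\bigl(\|F-p_k\|_\mu+\|F-p_{k-1}\|_\mu\bigr)\le(\sigma+2\epsilon)^k$$
for all sufficiently large $k$, using $M_{\lceil k\delta\rceil}^{1/k}\to 1$. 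Since $\sigma+2\epsilon<1$, the tail $\sum_{k>n}\|p_k-p_{k-1}\|_K$ is a convergent geometric series, so $\{p_n\}$ converges uniformly on $K$ to some $\tilde F\in C(K)$ with $\|\tilde F-p_n\|_K=O((\sigma+2\epsilon)^n)$. By the preceding paragraph $p_n\to F$ uniformly on $K$, so $\tilde F=F$ and $\|F-p_n\|_K=O((\sigma+2\epsilon)^n)$. Combined with $\|F-p_n\|_K\ge d_n^C(F,K)$ this forces $\rho_\infty\le\sigma+2\epsilon$, contradicting $\sigma+3\epsilon<\rho_\infty$. The main obstacle is precisely this transfer: $F$ itself is not a polynomial so Bernstein-Markov cannot be applied to $F-p_n$ directly, and one must instead apply it to the polynomial differences $p_k-p_{k-1}$ and telescope, with the sup-norm convergence of the previous paragraph serving to identify the uniform limit $\tilde F$ with $F$ without any assumption on $\supp\mu$.
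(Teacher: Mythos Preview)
Your proof is correct and shares the paper's core mechanism: apply the Bernstein--Markov inequality to polynomial differences (not to $F-p_n$ itself) and telescope to pass from $L^2_\mu$ control to sup-norm control. For the first equality your contradiction argument is a rephrasing of the paper's direct bound, which likewise telescopes $p_{k+1}-p_k$ starting from $r>\rho_\mu$. The one genuine difference is in the second equality: the paper telescopes $p_{k+1}-p_k$ there as well, whereas you compare $p_n$ directly to the same-level best sup approximant $q_n$ and apply Bernstein--Markov once to $p_n-q_n$, obtaining the clean pointwise bound $\|F-p_n\|_K\le(1+2M_{\lceil n\delta\rceil})\,d_n^C(F,K)$. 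This shortcut avoids one telescoping argument and yields the uniform convergence $p_n\to F$ on $K$ more transparently; the paper's route has the minor advantage of being uniform in method (the same estimate is reused for both equalities).
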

\begin{proof}
Let $r$ such that $\rho_{\infty}<r<1$. For $n$ large enough, there exists $q_{n}\in\Poly(nC)$ such that
$$\|F-q_{n}\|_{K}\leq r^{n}.$$
Hence, for the sequence $p_{n}$, we have
$$
\|F-p_{n}\|_{\mu}\leq\|F-q_{n}\|_{\mu}\leq\|F-q_{n}\|_{K}\leq r^{n},
$$
and, in particular, $p_{n}$ converges to $F$ in $L^{2}_{\mu}$. Moreover, for $k$ large,
$$
\|p_{k+1}-p_{k}\|_{\mu}\leq\|F-p_{k+1}\|_{\mu}+\|F-p_{k}\|_{\mu}\leq 2r^{k}.
$$
By the Bernstein-Markov property (\ref{wtdbm}), for a given $\eps>0$ such that $\tilde r=r(1+\eps)<1$, we have, for $k$ large,
$$
\|p_{k+1}-p_{k}\|_{K}\leq (1+\eps)^{k}\|p_{k+1}-p_{k}\|_{\mu}\leq 2\tilde r^{k},
$$
and thus $p_{n}=\sum_{k=-1}^{n-1}(p_{k+1}-p_{k})$ converges uniformly to $F$ on $K$. Moreover,
$$
\|F-p_{n}\|_{K}=\|\sum_{k=n}^{\infty}(p_{k+1}-p_{k})\|_{K}\leq 2
\frac{\tilde r^{n}}{1-\tilde r}.
$$
Letting $r$ tend to $\rho_{\infty}$, $\eps$ tend to 0, and taking $n$-th roots, proves the second equality in (\ref{same-rat}). 
For the first equality, set
$$
\rho_{\mu}:=\limsup_{n\to \infty}\|F-p_{n}\|_{\mu}^{1/n}\leq\rho_{\infty}.
$$
We have, for $\rho_{\mu}<r<1$, $\tilde r=r(1+\eps)$, and $n$ large,
$$
\|F-p_{n}\|_{K}\leq\sum_{k=n}^{\infty}\|p_{k+1}-p_{k}\|_{K}\leq
\sum_{k=n}^{\infty}(1+\eps)^{k}\|p_{k+1}-p_{k}\|_{\mu}
\leq2\sum_{k=n}^{\infty}\tilde r^{k}=\frac{2\tilde r^{n}}{1-\tilde r}.
$$
Letting $r$ tend to $\rho_{\mu}$, $\eps$ tend to 0, and taking $n$-th roots finishes the proof. 
\end{proof}

For simplicity, in the rest of this section we work in $\C^2$. The ``standard'' version of Theorem \ref{BW} is the case where $C$ is the simplex $\Sigma$ defined in (\ref{simpl}) with $d=2$. As a first attempt at a converse to Proposition \ref{oneway}, we let $C_{p}$ be the domain in $\R_+^{2}$ defined by 
$$
C_{p}=\{(x,y)\in\R^{2},~x,y\geq0,~x^{p}+y^{p}\leq1\},\qquad0<p\leq1,
$$
i.e., the $d=2$ case of (\ref{ceepee}). For $0<p<1$, $C_{p}$ is a non-convex body satisfying (\ref{stdhyp}), and for $p=1$, $C_{1}=\Sigma$. For $0<\alpha<1$, let $T_{\alpha}$ be the triangle with vertices $(0,0),(0,\alpha),(\beta,0)$ where $\beta>0$ is such that the side from $(0,\alpha)$ to $(\beta,0)$ is tangent to the curve $x^{p}+y^{p}=1$. Note $\beta$ is a function of $\alpha$.
Let $A_{p}$ and $A_{\alpha}$ denote the square roots of the areas of $C_{p}$ and $T_{\alpha}$, i.e., 
\begin{equation}\label{def-area}
A_{p}^{2}=\frac{\Gamma(1/p)\Gamma(1+1/p)}{p\Gamma(1+2/p)},\qquad A_{\alpha}^{2}=\frac{\alpha\beta}{2},
\end{equation}
where $\Gamma(x)$ denotes the Gamma function.

Our aim is to compare the rates of approximation with respect to a non-convex $C_{p}$ ($0<p<1$), and with respect to the family of inscribed convex triangles $T_{\alpha}$, $0<\alpha<1$. Of course, since $T_{\alpha}\subset C_{p}$, for a compact set $K\subset\C^{2}$ and a function $F$ on $K$, we have
\begin{equation}\label{obv-ineq}
d_{n}^{C_{p}}(F,K)\leq \inf_{0<\alpha<1}d_{n}^{T_{\alpha}}(F,K).
\end{equation}
The comparison is less clear, and more interesting, if we take into account that the number of monomials in $nC_{p}$ is larger than the number of monomials in $nT_{\alpha}$, and thus normalize accordingly, by comparing $d_{n}^{C_{p}}(F,K)^{1/A_{p}}$ and $ d_{n}^{T_{\alpha}}(F,K)^{1/A_{\alpha}}$; i.e., 
\begin{equation}\label{2rates}
d_{n/A_{p}}^{C_{p}}(F,K)\quad\text{and}\quad d_{n/A_{\alpha}}^{T_{\alpha}}(F,K)
\end{equation}
where by abuse of notation, we write $d_{n/A}^C(F,K)$ to denote $d_{\lfloor n/A \rfloor}^C(F,K)$. In the sequel, we estimate these two quantities explicitly in two extreme cases, namely when the function $F$ is of the form
$$F(z,w)=f(z)+g(w)\quad\text{and when}\quad F(z,w)=f(zw).
$$ 
Let us start with the first case, and consider a subset $K=A\times B\subset\C^{2}$ where $A$ and $B$ are regular compact subsets of $\C$. We will denote by
$\mu:=\mu_{A}\otimes\mu_{B}$ the measure on $K$ arising from Bernstein-Markov measures $\mu_{A},\mu_{B}$ on $A,B$.
Let $f$ and $g$ be holomorphic functions in neighborhoods of $A$ and $B$. 
We denote by $\rho_A:=\rho_{A}(f)$ and $\rho_B:=\rho_{B}(g)$ the asymptotic rate (in the $n$-th root sense) of best uniform polynomial approximation to $f$ on $A$ and to $g$ on $B$. Then $\max(\rho_A,\rho_B) <1$.  Define the function of two variables
$$F(z,w)=f(z)+g(w).
$$ 
We begin with a lemma which is applicable to the sets $C_p$ when $0\leq p\leq 1$.
\begin{lemma}\label{L2-sep}
Let $C$ be a subset of $\R_{+}^{2}$ and assume that
$$
C\subset[0,1]^{2},\quad C\cap(\R_{+}\times\{0\})=[0,1]\times\{0\},\quad C\cap(\{0\}\times\R_{+})=\{0\}\times[0,1].
$$
Let 
$P_{n}(z,w)$ be the best $L^{2}_{\mu}$ approximant to $F(z,w)$ in $\Poly(nC)$. Then
$$
P_{n}(z,w)=t_{n}^{f}(z)+t_{n}^{g}(w),
$$
where $t_{n}^{f}$ and $t_{n}^{g}$ are the best $L^{2}_{\mu_{A}}$ and best $L^{2}_{\mu_{B}}$ approximants to $f$ and $g$ in $P_{n}(\C)$, the space of polynomials in one variable of degree less than or equal to $n$.
\end{lemma}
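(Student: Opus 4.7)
I would exhibit $Q_n(z,w):=t_n^f(z)+t_n^g(w)$ as the orthogonal projection of $F$ onto $\Poly(nC)$ in $L^2_\mu$ by verifying the two defining conditions: (i) $Q_n\in\Poly(nC)$ and (ii) $F-Q_n\perp\Poly(nC)$ in $L^2_\mu$. For (i), the axis hypotheses $C\cap(\R_+\times\{0\})=[0,1]\times\{0\}$ and $C\cap(\{0\}\times\R_+)=\{0\}\times[0,1]$ force $(j,0),(0,k)\in nC$ for all $0\leq j,k\leq n$, so both $t_n^f(z)$ and $t_n^g(w)$ lie in $\Poly(nC)$ and hence so does $Q_n$.

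For (ii), it suffices to test against the spanning monomials $z^jw^k$ with $(j,k)\in nC\cap\N^2$. Writing $F-Q_n=(f-t_n^f)(z)+(g-t_n^g)(w)$ and using the product structure $\mu=\mu_A\otimes\mu_B$, Fubini gives
$$\langle F-Q_n,\,z^jw^k\rangle_\mu=\langle f-t_n^f,\,z^j\rangle_{\mu_A}\int_B\bar{w}^k\,d\mu_B+\int_A\bar{z}^j\,d\mu_A\cdot\langle g-t_n^g,\,w^k\rangle_{\mu_B}.$$
Since $C\subset[0,1]^2$ we have $j,k\leq n$; the defining orthogonality of $t_n^f$ and $t_n^g$ as best $L^2$-approximants in $P_n(\C)$ then kills the inner products $\langle f-t_n^f,z^j\rangle_{\mu_A}$ and $\langle g-t_n^g,w^k\rangle_{\mu_B}$ on the right, proving (ii). Uniqueness of the orthogonal projection onto $\Poly(nC)$ then yields $P_n=Q_n$.

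The only point requiring care is the axis cases $j=0$ and $k=0$, where one of the Fubini factors degenerates to a moment of $\mu_A$ or $\mu_B$ rather than vanishing automatically; this is handled because the defining orthogonality of $t_n^f$ (resp.\ $t_n^g$) in $P_n(\C)$ includes the constant function $z^0$ (resp.\ $w^0$). Beyond this single bookkeeping point the argument is a routine Fubini calculation, so I do not anticipate a genuine obstacle.
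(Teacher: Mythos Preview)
Your argument is correct and is essentially the same as the paper's: both exploit the product structure $\mu=\mu_A\otimes\mu_B$ via Fubini to separate the $z$- and $w$-integrals. The only cosmetic difference is packaging---the paper builds a product orthonormal basis $p_k(z)q_l(w)$ and reads off the Fourier coefficients of $f(z)$ and $g(w)$ (finding them supported on the axes $l=0$ and $k=0$), whereas you verify directly that the candidate $Q_n=t_n^f+t_n^g$ satisfies the orthogonality characterization of the projection by testing against monomials. Both arguments reduce to the same one-line Fubini computation.
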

\begin{proof}
Assume that
\begin{align*}
p_{0}(z), p_{1}(z),\ldots,p_{n}(z),\qquad\deg p_{k}=k,\quad k=0,\ldots,n,
\\[5pt]
q_{0}(z), q_{1}(z),\ldots,q_{n}(z),\qquad\deg q_{k}=k,\quad q=0,\ldots,n,
\end{align*}
 are orthonormal bases in $L^{2}_{\mu_{A}}$ and $L^{2}_{\mu_{B}}$. Then, the family of polynomials $p_{k}(z)q_{l}(w)$, $(k,l)\in nC$, is an orthonormal basis of $\Poly(nC)$. Moreover
\begin{align*}
\int f(z)\bar{p_{k}(z)q_{l}(w)}d\mu_{A}(z)d\mu_{B}(w) & =\delta_{l,0}\int f(z)\bar{p_{k}(z)}d\mu_{A}(z),\quad l\geq0,
\end{align*}
and similarly 
\begin{align*}
\int g(w)\bar{p_{l}(z)q_{k}(w)}d\mu_{A}(z)d\mu_{B}(w) & =\delta_{l,0}\int g(w)\bar{q_{k}(w)}d\mu_{B}(w),\quad l\geq0.
\end{align*}
The statement of the lemma follows.
\end{proof}

\begin{remark}\label{alphabeta} If for some $\alpha, \beta \leq 1$, $C\subset[0,1]^{2}$ satisfies
$$C\cap(\R_{+}\times\{0\})=[0,\alpha]\times\{0\},\quad C\cap(\{0\}\times\R_{+})=\{0\}\times[0,\beta], $$
a similar proof shows that $P_{n}(z,w)=t_{\lfloor \alpha n \rfloor}^{f}(z)+t_{\lfloor \beta n \rfloor}^{g}(w)$. 
\end{remark}

We now compute the asymptotic rates of approximation with respect to the sets $C_{p}$ and the family $T_{\alpha}$, $0<\alpha<1$.
\begin{proposition}\label{Rate-C}
We have, for $0<p\leq1$,
\begin{align}
\limsup_{n}d_{n}^{C_{p}}(F,K)^{1/n} & =\max(\rho_{A},\rho_{B}).
\label{ineg-C}
\end{align}
\end{proposition}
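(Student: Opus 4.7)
The plan is to show the two inequalities separately, using the fact that $C_{p}$ contains both coordinate segments $[0,1]\times\{0\}$ and $\{0\}\times[0,1]$, so that Lemma \ref{L2-sep} is applicable and, moreover, any univariate polynomial of degree $n$ in $z$ or in $w$ sits inside $\Poly(nC_{p})$.

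For the upper bound $\limsup_{n}d_{n}^{C_{p}}(F,K)^{1/n}\leq\max(\rho_{A},\rho_{B})$, I would take univariate best uniform approximants $p_{n}^{f}\in\PP_{n}(\C)$ and $p_{n}^{g}\in\PP_{n}(\C)$ to $f$ on $A$ and $g$ on $B$. Since $(n,0),(0,n)\in nC_{p}$, the separated polynomial $p_{n}^{f}(z)+p_{n}^{g}(w)$ lies in $\Poly(nC_{p})$, and the triangle inequality $\|F-p_{n}^{f}-p_{n}^{g}\|_{K}\leq\|f-p_{n}^{f}\|_{A}+\|g-p_{n}^{g}\|_{B}$ together with $n$-th roots immediately gives the bound.

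For the lower bound, I would pass to the $L^{2}$ norm induced by the product Bernstein-Markov measure $\mu=\mu_{A}\otimes\mu_{B}$ (which is itself Bernstein-Markov on $K=A\times B$ by a standard tensor argument). Lemma \ref{L2-sep} applies to $C=C_{p}$, so the best $L^{2}_{\mu}$ approximant in $\Poly(nC_{p})$ splits as $P_{n}(z,w)=t_{n}^{f}(z)+t_{n}^{g}(w)$ with $t_{n}^{f},t_{n}^{g}$ the best univariate $L^{2}$ approximants of degree $\leq n$. Expanding in the tensor basis $\{p_{k}(z)q_{l}(w)\}$ with $p_{0}\equiv q_{0}\equiv 1$ and using $\langle F,p_{k}q_{l}\rangle_{\mu}=\delta_{l,0}\langle f,p_{k}\rangle_{\mu_{A}}+\delta_{k,0}\langle g,q_{l}\rangle_{\mu_{B}}$, only the axis indices outside $nC_{p}$ contribute, and by Parseval
$$\|F-P_{n}\|_{\mu}^{2}=\|f-t_{n}^{f}\|_{\mu_{A}}^{2}+\|g-t_{n}^{g}\|_{\mu_{B}}^{2}.$$

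The one-dimensional version of Lemma \ref{L2-rate} (apply it with $C=[0,1]\subset\R_{+}$) yields $\limsup_{n}\|f-t_{n}^{f}\|_{\mu_{A}}^{1/n}=\rho_{A}$ and $\limsup_{n}\|g-t_{n}^{g}\|_{\mu_{B}}^{1/n}=\rho_{B}$, whence $\limsup_{n}\|F-P_{n}\|_{\mu}^{1/n}\geq\max(\rho_{A},\rho_{B})$. The upper bound already ensures $\rho_{\infty}:=\limsup_{n}d_{n}^{C_{p}}(F,K)^{1/n}<1$, so Lemma \ref{L2-rate} applied on $K$ with $C=C_{p}$ gives $\limsup_{n}\|F-P_{n}\|_{\mu}^{1/n}=\rho_{\infty}$, and combining with the previous inequality produces the reverse bound. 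The whole argument is essentially mechanical once Lemmas \ref{L2-rate} and \ref{L2-sep} are in hand; the only mild subtlety I anticipate is checking cleanly that cross terms of the form $\sum_{k,l\geq 1,(k,l)\notin nC_{p}}|\langle F,p_{k}q_{l}\rangle_{\mu}|^{2}$ vanish, which is what forces $P_{n}$ to have the separated form dictated by Lemma \ref{L2-sep}.
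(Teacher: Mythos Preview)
Your proposal is correct and follows essentially the same route as the paper: reduce to the $L^{2}$ rate via Lemma~\ref{L2-rate}, use Lemma~\ref{L2-sep} to see the best $L^{2}_{\mu}$ approximant separates as $t_{n}^{f}(z)+t_{n}^{g}(w)$, observe that the error splits as $\|f-t_{n}^{f}\|_{\mu_{A}}^{2}+\|g-t_{n}^{g}\|_{\mu_{B}}^{2}$, and invoke the one-variable version of Lemma~\ref{L2-rate}. Your separate direct argument for the upper bound via uniform approximants is a harmless (slightly redundant) addition; the paper handles both inequalities at once through the $L^{2}$ equivalence.
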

\begin{proof}
In view of Lemma \ref{L2-rate}, it is equivalent to estimate the rate of best $L^{2}$ approximation to $F$. Let $P_{n}\in\Poly(nC_{p})$ be the best $L^{2}$ approximants to $F$ with respect to the measure $\mu$.
From Lemma \ref{L2-sep}, we get
$$
\|F-P_{n}\|_{\mu}^{2}=\|f(z)-t_{n}^{f}(z)+g(w)-t_{n}^{g}(w)\|^{2}_{\mu}=\|f-t_{n}^{f}\|^{2}_{\mu_{A}}+\|g-t_{n}^{g}\|^{2}_{\mu_{B}},
$$
where we use the fact that
$$
\int(f-t_{n}^{f})(z)d\mu_{A}(z)=0,\quad\int(g-t_{n}^{g})(w)d\mu_{B}(w)=0.
$$
Next, making use of the one-variable version of Lemma \ref{L2-rate}, we have
$$
\limsup_{n\to \infty}\|f-t^{f}_{n}\|_{\mu_{A}}^{1/n}=\rho_{A},\quad
\limsup_{n\to \infty}\|g-t^{g}_{n}\|_{\mu_{B}}^{1/n}=\rho_{B}.
$$
The proposition follows.
\end{proof}
\begin{proposition}\label{Rate-T}
We have, for $0<p\leq1$,
$$
\limsup_{n}d_{n}^{T_{\alpha}}(F,K)^{1/n} =\max(\rho_{A}^{\alpha},\rho_{B}^{\beta}).
$$
\end{proposition}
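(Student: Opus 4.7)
The plan is to follow the proof of Proposition \ref{Rate-C} essentially verbatim, replacing Lemma \ref{L2-sep} by Remark \ref{alphabeta} to handle the fact that $T_\alpha$ has different intercepts on the two coordinate axes. First, by Lemma \ref{L2-rate} applied with $C=T_\alpha$ (which contains $a\Sigma$ for some $a>0$ since $T_\alpha$ has nonempty interior meeting both axes), it suffices to compute the $L^2_\mu$ rate: if $P_n\in \Poly(nT_\alpha)$ is the best $L^2_\mu$ approximant to $F$, then $\limsup_n \|F-P_n\|_\mu^{1/n}$ equals the desired sup-norm rate.

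Next I would verify that Remark \ref{alphabeta} applies to $T_\alpha$. Since $T_\alpha$ has vertices $(0,0),(0,\alpha),(\beta,0)$, its intersections with the coordinate axes are $[0,\beta]\times\{0\}$ and $\{0\}\times[0,\alpha]$; moreover $T_\alpha\subset[0,1]^2$ because the tangency of the hypotenuse to $C_p\subset [0,1]^2$ forces $\beta\leq 1$, and $\alpha<1$ by hypothesis. Remark \ref{alphabeta} then gives
\[
P_n(z,w)=t^f_{\lfloor \beta n\rfloor}(z)+t^g_{\lfloor \alpha n\rfloor}(w),
\]
with $t^f_m$ and $t^g_m$ the best one-variable $L^2_{\mu_A}$ and $L^2_{\mu_B}$ approximants of degree at most $m$. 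The same orthogonality argument as in the proof of Proposition \ref{Rate-C} (the cross terms vanish because $t^f_{\lfloor \beta n\rfloor}-f$ is $\mu_A$-orthogonal to $1$ and similarly for $g$) yields
\[
\|F-P_n\|_\mu^2=\|f-t^f_{\lfloor \beta n\rfloor}\|_{\mu_A}^2+\|g-t^g_{\lfloor \alpha n\rfloor}\|_{\mu_B}^2.
\]

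The last step is to translate the one-variable asymptotics into the two-variable rate. The one-dimensional version of Lemma \ref{L2-rate} gives $\limsup_m\|f-t^f_m\|_{\mu_A}^{1/m}=\rho_A$ and $\limsup_m\|g-t^g_m\|_{\mu_B}^{1/m}=\rho_B$. Writing
\[
\|f-t^f_{\lfloor \beta n\rfloor}\|_{\mu_A}^{1/n}=\Bigl(\|f-t^f_{\lfloor \beta n\rfloor}\|_{\mu_A}^{1/\lfloor \beta n\rfloor}\Bigr)^{\lfloor \beta n\rfloor/n}
\]
and using $\lfloor \beta n\rfloor/n\to\beta$, the $\limsup$ of the left-hand side is $\rho_A^{\beta}$; the same argument gives $\rho_B^{\alpha}$ for the $g$-term. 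Combining via the maximum yields the stated exponents.

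There is no genuine obstacle here — the proof is a direct adaptation of Proposition \ref{Rate-C}, with the only bookkeeping point being that the two intercepts $\alpha$ and $\beta$ of $T_\alpha$ on the axes produce asymmetric degrees in the univariate approximants, hence asymmetric exponents in the final rate. The Bernstein-Markov property for the product measure $\mu=\mu_A\otimes\mu_B$, needed to invoke Lemma \ref{L2-rate} and its one-variable analogue, is standard since products of Bernstein-Markov measures are Bernstein-Markov.
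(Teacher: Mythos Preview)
Your argument is essentially the paper's own proof: reduce to the $L^2_\mu$ rate via Lemma~\ref{L2-rate}, split the best approximant using Remark~\ref{alphabeta}, and read off the univariate rates.

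One bookkeeping point deserves mention. Since $T_\alpha$ has $x$-intercept $\beta$ and $y$-intercept $\alpha$, your (careful) application of Remark~\ref{alphabeta} --- whose ``$\alpha$'' denotes the $x$-intercept --- correctly produces $P_n(z,w)=t^f_{\lfloor\beta n\rfloor}(z)+t^g_{\lfloor\alpha n\rfloor}(w)$ and hence the rate $\max(\rho_A^{\beta},\rho_B^{\alpha})$, with the exponents \emph{swapped} relative to the displayed formula in the proposition. The paper's proof writes $t^f_{\lfloor\alpha n\rfloor}(z)+t^g_{\lfloor\beta n\rfloor}(w)$ directly, matching its stated formula. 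This is a harmless labeling inconsistency (Theorem~\ref{main-thm} only uses the result near $\alpha=\beta$ or in a degenerate limit, where the swap is immaterial), but your final sentence ``yields the stated exponents'' does not literally agree with what your own computation produced.
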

\begin{proof}
From Remark \ref{alphabeta}, we now get
$$
\|F-P_{n}\|_{\mu}^{2}=\|f(z)-t_{\lfloor \alpha n \rfloor}^{f}(z)+g(w)-t_{\lfloor \beta n \rfloor}^{g}(w)\|^{2}_{\mu}=\|f-t_{\lfloor \alpha n \rfloor}^{f}\|^{2}_{\mu_{A}}+\|g-t_{\lfloor \beta n \rfloor}^{g}\|^{2}_{\mu_{B}}.
$$
Since
$$
\limsup_{n\infty}\|f-t^{f}_{\lfloor \alpha n \rfloor}\|_{\mu_{A}}^{1/n}=\rho_{A}^{\alpha},\quad
\limsup_{n\infty}\|g-t^{g}_{\lfloor \beta n \rfloor}\|_{\mu_{B}}^{1/n}=\rho_{B}^{\beta},
$$
the proposition follows.
\end{proof}
Now recall that comparing the limits of the $n$-th roots of the two rates in (\ref{2rates}) is equivalent to comparing
$$
\limsup_{n}d_{n}^{C_{p}}(F,K)^{1/(A_{p}n)}=\max(\rho_{A},\rho_{B})^{1/A_{p}}
$$
and
$$ \limsup_{n}d_{n}^{T_{\alpha}}(F,K)^{1/(A_{\alpha}n)}=
(\max(\rho_{A}^{\alpha},\rho_{B}^{\beta}))^{1/A_{\alpha}}.
$$
\begin{theorem}\label{main-thm}
When $\alpha$ and $\beta$ are close to each other, that is close  to $(1/2)^{1/p-1}$, and the triangle $T_{\alpha}$ is close to an isosceles triangle, one has
\begin{equation}\label{main1}
\limsup_{n}(d_{n/A_{\alpha}}^{T_{\alpha}}(F,K))^{1/n}\leq
\limsup_{n}(d_{n/A_{p}}^{C_{p}}(F,K))^{1/n},
\end{equation}
while, when $\alpha$ is close to 0 and $\beta$ close to $1$ (or the reverse), and the triangle $T_{\alpha}$ becomes very small, one has the opposite inequality
$$\limsup_{n}(d_{n/A_{p}}^{C_{p}}(F,K))^{1/n}\leq\limsup_{n}(d_{n/A_{\alpha}}^{T_{\alpha}}
(F,K))^{1/n}.$$
\end{theorem}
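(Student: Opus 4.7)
The plan is to use Propositions \ref{Rate-C} and \ref{Rate-T} to rewrite both sides of (\ref{main1}) as explicit expressions in $\rho_{A},\rho_{B},\alpha,\beta,A_{\alpha},A_{p}$, and then to reduce the comparison to an inequality about these constants. Writing $m=\lfloor n/A_{p}\rfloor$ (respectively $m=\lfloor n/A_{\alpha}\rfloor$), Proposition \ref{Rate-C} gives
$$
\limsup_{n}(d_{n/A_{p}}^{C_{p}}(F,K))^{1/n}=\max(\rho_{A},\rho_{B})^{1/A_{p}},
$$
and Proposition \ref{Rate-T} gives
$$
\limsup_{n}(d_{n/A_{\alpha}}^{T_{\alpha}}(F,K))^{1/n}=\max\bigl(\rho_{A}^{\alpha/A_{\alpha}},\rho_{B}^{\beta/A_{\alpha}}\bigr).
$$
Setting $r_{A}:=-\log\rho_{A}>0$ and $r_{B}:=-\log\rho_{B}>0$, inequality (\ref{main1}) is equivalent to
$$
\frac{\min(\alpha r_{A},\beta r_{B})}{A_{\alpha}}\;\geq\;\frac{\min(r_{A},r_{B})}{A_{p}},
$$
and the opposite inequality corresponds to the reverse.

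\emph{Isosceles case.} First I would fix the parameterization of $T_{\alpha}$: from the tangent-line computation at $(x_{0},y_{0})\in\{x^{p}+y^{p}=1\}$ one obtains $\beta=x_{0}^{1-p}$, $\alpha=y_{0}^{1-p}$, hence the tangency constraint $\alpha^{p/(1-p)}+\beta^{p/(1-p)}=1$. The symmetric solution $\alpha=\beta$ gives the isosceles value $\alpha=\beta=(1/2)^{1/p-1}$, and then $A_{\alpha}^{2}=\alpha^{2}/2$, so
$$
\frac{\alpha}{A_{\alpha}}=\frac{\beta}{A_{\alpha}}=\sqrt{2}.
$$
The displayed inequality thus reduces to $\sqrt{2}\geq 1/A_{p}$, i.e.\ $A_{p}\geq 1/\sqrt{2}$. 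Since $A_{1}^{2}=\tfrac{1}{2}$ by direct computation from (\ref{def-area}) and $\Sigma=C_{1}\subsetneq C_{p}$ for $0<p<1$, we have $A_{p}^{2}=\mathrm{area}(C_{p})>\tfrac{1}{2}$, so the inequality is strict. By continuity of $(\alpha,\beta)\mapsto\min(\alpha r_{A},\beta r_{B})/\sqrt{\alpha\beta/2}$ along the tangency curve, strict inequality persists in a neighborhood of the isosceles configuration, yielding (\ref{main1}).

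\emph{Thin-triangle case.} When $\alpha\to 0$ (so $\beta\to 1$ along $\alpha^{p/(1-p)}+\beta^{p/(1-p)}=1$), I would observe that $A_{\alpha}=\sqrt{\alpha\beta/2}\to 0$, while for $\alpha$ sufficiently small one has $\alpha r_{A}<\beta r_{B}$, so
$$
\frac{\min(\alpha r_{A},\beta r_{B})}{A_{\alpha}}=\frac{\alpha r_{A}}{\sqrt{\alpha\beta/2}}=r_{A}\sqrt{\frac{2\alpha}{\beta}}\longrightarrow 0,
$$
whereas $\min(r_{A},r_{B})/A_{p}$ is a fixed positive constant. This gives the reverse of the displayed inequality, hence the second conclusion of the theorem. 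The case $\beta\to 0,\ \alpha\to 1$ is identical by symmetry.

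The only real subtlety is the precise meaning of ``close to'': the isosceles direction is genuinely a neighborhood statement obtained by continuity from the strict inequality $A_{p}>1/\sqrt{2}$, while the thin-triangle direction is an asymptotic statement as $\alpha\to 0$ (or $\beta\to 0$). Everything else is bookkeeping from Propositions \ref{Rate-C} and \ref{Rate-T}, so I do not expect any serious obstacle beyond verifying the tangency parameterization and the elementary inequality $\mathrm{area}(C_{p})>\mathrm{area}(\Sigma)$.
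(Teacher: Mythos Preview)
Your overall strategy coincides with the paper's: both sides are first reduced, via Propositions \ref{Rate-C} and \ref{Rate-T}, to the explicit constants $\max(\rho_A,\rho_B)^{1/A_p}$ and $\max(\rho_A^{\alpha},\rho_B^{\beta})^{1/A_\alpha}$; in the isosceles case $\alpha=\beta$ this becomes the comparison of $\sqrt{2}$ with $1/A_p$, and continuity then handles nearby triangles; the thin-triangle case is treated by letting $\alpha\to 0$ exactly as you do (the paper even writes $1/A_\alpha\simeq\sqrt{2/\alpha}$, which is your computation).

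However, your verification of the key inequality $A_p\ge 1/\sqrt 2$ in the isosceles step contains a genuine error: you assert that $\Sigma=C_1\subsetneq C_p$ for $0<p<1$ and deduce $\mathrm{area}(C_p)>1/2$. The containment goes the other way. For $0<p<1$ and $x,y>0$ with $x+y=1$ one has $x^p+y^p>x+y=1$, so the boundary curve of $C_p$ bows \emph{toward} the origin and $C_p\subsetneq\Sigma$; consequently $\mathrm{area}(C_p)<1/2$ (for example $\mathrm{area}(C_{1/2})=\Gamma(3)^2/\Gamma(5)=1/6$). Thus the area-containment argument proves nothing in the direction you need. The paper does not argue geometrically at this point; instead it rewrites $1/A_p\le\sqrt 2$ via (\ref{def-area}) as the Gamma-function inequality
\[
\Gamma(1+2/p)\le 2\,\Gamma(1+1/p)^2
\]
and states that this is checked ``by computing derivatives,'' with equality only at $p=1$. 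If you want to follow the paper's line you must replace the containment claim by an analytic verification of this Gamma inequality; the rest of your argument (the tangency parametrization $\alpha^{p/(1-p)}+\beta^{p/(1-p)}=1$, the continuity extension to nearby $\alpha$, and the $\alpha\to 0$ limit) is correct and essentially identical to the paper's.
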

\begin{proof}
By symmetry, we may assume without loss of generality that $\rho_{B}\leq\rho_{A}$. 

Assume first that $\alpha=\beta$. Then, we have to show that $\rho_{A}^{\sqrt{2}}\leq\rho_{A}^{1/A_{p}}$, that is
$1/A_{p}\leq\sqrt{2}$. In view of (\ref{def-area}), this is equivalent to
$$
p\Gamma(1+2/p)\leq2\Gamma(1/p)\Gamma(1+1/p)\iff\Gamma(1+2/p)\leq2\Gamma(1+1/p)^{2},
$$
which is easily seen to be true by computing derivatives. Moreover, equality holds only if $p=1$.
Thus, by continuity, if $p\neq1$, inequality (\ref{main1}) still holds when $\alpha$ and $\beta$ are close to each other.

Now, consider the case when $\alpha$ is close to 0 and $\beta$ is close to 1. Then $\rho_{A}^{\alpha}$ is close to 1 while $\rho_{B}^{\beta}$ is close to $\rho_{B}$, so $\max(\rho_{A}^{\alpha},\rho_{B}^{\beta})=\rho_{A}^{\alpha}$. Moreover $1/A_{\alpha}\simeq\sqrt{2/\alpha}$, so the limit of the rates corresponding to $T_{\alpha}$ is close to 1, while the limit of the rates corresponding to $C_{p}$ remains a number less than 1.  
\end{proof}
We now consider the case of a function $F(z,w)$ of the form
$$
F(z,w)=f(zw),
$$
where we assume that the largest disk centered at the origin in $\C$ contained in the domain of analyticity of $f$ is the disk $D_{R}$ of radius $R>1$. Also let
$K=\D\times \D=\{(z,w),~|z|\leq 1,~|w|\leq 1\}$ be the unit polydisk in $\C^2$.
\begin{proposition} \label{threeeight}
We have, for $0<p\leq1$, and with $r=1/R$,
\begin{align}
\limsup_{n}d_{n}^{C_{p}}(F,K)^{1/n} =r^{(1/2)^{1/p}}.
\label{ineg-C}
\end{align}
\end{proposition}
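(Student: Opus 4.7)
The plan is to reduce the problem to an $L^{2}$ approximation question via Lemma \ref{L2-rate} and then exploit the fact that $F(z,w)=f(zw)$ has a particularly simple orthogonal expansion on a natural measure on $K$. First I would equip $K=\bar{\D}\times\bar{\D}$ with normalized Haar measure $\mu$ on its distinguished boundary (the $2$-torus $T=\{|z|=|w|=1\}$). Since polynomials attain their sup norm on $K$ on $T$, and Cauchy--Schwarz applied to the orthonormal monomial basis gives $\|p_{n}\|_{T}\leq\sqrt{\dim\Poly(nC_{p})}\,\|p_{n}\|_{\mu}$ with polynomial growth in $n$, the pair $(K,\mu)$ satisfies the Bernstein--Markov property (\ref{wtdbm}). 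Because $F$ is holomorphic on a neighborhood of $K$ (as $|zw|\leq 1<R$ there), the classical Oka--Weil theorem gives $\limsup_{n}d_{n}^{C_{p}}(F,K)^{1/n}<1$, so Lemma \ref{L2-rate} applies and it suffices to compute the asymptotic $L^{2}_{\mu}$ rate.

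Next, since the monomials $\{z^{a}w^{b}\}$ form an orthonormal basis of $L^{2}(\mu)$ and the Taylor expansion $f(t)=\sum_{k\geq 0}a_{k}t^{k}$ (with $\limsup_{k}|a_{k}|^{1/k}=r$) yields
$$F(z,w)=\sum_{k\geq 0}a_{k}z^{k}w^{k},$$
an orthogonal expansion supported on the diagonal, the best $L^{2}_{\mu}$-approximant $P_{n}\in\Poly(nC_{p})$ is the orthogonal truncation
$$P_{n}(z,w)=\sum_{\{k:(k,k)\in nC_{p}\}}a_{k}z^{k}w^{k}.$$
The condition $(k,k)\in nC_{p}$ reads $2k^{p}\leq n^{p}$, i.e.\ $k\leq n/2^{1/p}$; set $N_{n}=\lfloor n/2^{1/p}\rfloor$. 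Parseval's identity then gives
$$\|F-P_{n}\|_{\mu}^{2}=\sum_{k>N_{n}}|a_{k}|^{2}.$$

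To finish, I would match upper and lower bounds on this tail. Given $\eps>0$ small, $|a_{k}|\leq (r+\eps)^{k}$ for $k$ large, so $\|F-P_{n}\|_{\mu}\leq C(r+\eps)^{N_{n}}$ and thus $\limsup_{n}\|F-P_{n}\|_{\mu}^{1/n}\leq (r+\eps)^{1/2^{1/p}}$. Conversely, by definition of $\limsup|a_{k}|^{1/k}=r$, for infinitely many $k$ one has $|a_{k}|\geq(r-\eps)^{k}$; for each such $k$ choose $n=\lceil k\cdot 2^{1/p}\rceil$, so that $N_{n}<k$ and $\|F-P_{n}\|_{\mu}\geq |a_{k}|\geq(r-\eps)^{k}$, giving along this subsequence an $n$-th root at least $(r-\eps)^{1/2^{1/p}}$. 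Sending $\eps\to 0$ from both sides yields $\limsup_{n}\|F-P_{n}\|_{\mu}^{1/n}=r^{(1/2)^{1/p}}$, which Lemma \ref{L2-rate} transfers to the sup norm.

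The only subtle point is identifying the best $L^{2}$-approximant concretely; this rests on the orthogonality of monomials with respect to Haar measure on $T$ together with the observation that $F$ involves only the diagonal monomials $z^{k}w^{k}$, so the projection onto $\Poly(nC_{p})$ is determined purely by which diagonal lattice points lie in $nC_{p}$. Once this is in place, the exponent $(1/2)^{1/p}$ emerges geometrically from the fact that the diagonal $y=x$ meets $\partial C_{p}$ at $(2^{-1/p},2^{-1/p})$.
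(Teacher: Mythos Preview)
Your argument is essentially the paper's own: reduce to $L^{2}$ via Lemma \ref{L2-rate} with Haar measure on the torus, use orthonormality of the monomials to identify the best $L^{2}_{\mu}$ approximant as the diagonal truncation at $N_{n}=\lfloor n/2^{1/p}\rfloor$, and read off the rate from the intersection of the diagonal with $\partial C_{p}$. The paper packages the last step by invoking the one-variable Bernstein--Walsh theorem for $f$ on $\bar\D$, whereas you compute the tail $\sum_{k>N_{n}}|a_{k}|^{2}$ directly from $\limsup|a_{k}|^{1/k}=r$; this is only a cosmetic difference.

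There is one slip in your lower bound. With $n=\lceil k\cdot 2^{1/p}\rceil$ you have $n\geq k\cdot 2^{1/p}$, hence $N_{n}=\lfloor n/2^{1/p}\rfloor\geq k$, so the term $a_{k}z^{k}w^{k}$ lies \emph{inside} $P_{n}$, not in the tail, and the inequality $N_{n}<k$ fails. You need the opposite choice: take $n$ to be the largest integer strictly below $k\cdot 2^{1/p}$ (e.g.\ $n=\lceil k\cdot 2^{1/p}\rceil-1$). Then $n/2^{1/p}<k$, so $k>N_{n}$, $\|F-P_{n}\|_{\mu}\geq|a_{k}|\geq(r-\eps)^{k}$, and $k/n\to 2^{-1/p}$ along this subsequence, giving the desired lower bound. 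With this correction the proof is complete.
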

\begin{proof}
Let $\mu_{K}=\mu\otimes\mu$, $\mu=d\theta/2\pi$, be the normalized measure supported on $\T\times\T=\{(z,w),~|z|= 1,~|w|= 1\}$. In view of Lemma \ref{L2-rate}, it is sufficent to consider a sequence of best $L^{2}(\mu_K)$ approximants to $F$. Since the family $\{z^{j}w^{k}\}$, $(j,k)\in\N^{2}$, is orthogonal with respect to $\mu_{K}$, we have
$$
\limsup_{n}d_{n}^{C_{p}}(F,K)^{1/n}=\limsup_{n}\|f-p_{na_{p}}\|_{\mu}^{1/n},
$$
where $a_{p}=(1/2)^{1/p}$ is such that point $(a_{p},a_{p})$ is the intersection of the curve $x^{p}+y^{p}=1$ and the line $x=y$, and $p_{na_{p}}$ denotes the best $L^{2}(\mu)$ polynomial approximant to $f$ of degree at most $na_{p}$. Moreover, by using the one variable versions of Lemma \ref{L2-rate} and Theorem \ref{BW}, we get
$$
\limsup_{n}\|f-p_{na_{p}}\|_{\mu}^{1/n}=\limsup_{n}d_{na_{p}}(f,\D)^{1/n}=r^{a_{p}},
$$
which proves (\ref{ineg-C}).
\end{proof}
\begin{proposition}\label{Rate-T2}
We have, for $0<p\leq1$,
\begin{align}\label{ineg-T2}
\limsup_{n}d_{n}^{T_{\alpha}}(F,K)^{1/n} & =r^{\alpha\beta/(\alpha+\beta)}.
\end{align}
\end{proposition}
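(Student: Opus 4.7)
The plan is to mirror the proof of Proposition \ref{threeeight} in the triangular case. By Lemma \ref{L2-rate}, it suffices to control the $L^2(\mu_K)$ best approximation rate, where $\mu_K = \mu\otimes\mu$ is the normalized Haar measure on the torus $\T\times\T$. The crucial observation is that on $\T\times\T$ the monomials $\{z^j w^k\}$ are orthonormal, and the function $F(z,w)=f(zw)=\sum_{j\geq 0}c_j z^j w^j$ has a Taylor expansion supported entirely on the diagonal lattice $\{(j,j):j\geq 0\}$. Consequently, the best $L^2(\mu_K)$ approximant to $F$ from $\Poly(nT_\alpha)$ is obtained simply by truncating the diagonal expansion to those $(j,j)$ which lie in $nT_\alpha$.

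Next I would determine exactly which diagonal lattice points lie in $nT_\alpha$. The hypotenuse of $T_\alpha$ joins $(0,\alpha)$ to $(\beta,0)$, so $T_\alpha=\{(x,y)\geq 0:\alpha x+\beta y\leq \alpha\beta\}$, and $nT_\alpha=\{(x,y)\geq 0:\alpha x+\beta y\leq n\alpha\beta\}$. A diagonal point $(j,j)$ belongs to $nT_\alpha$ precisely when $j(\alpha+\beta)\leq n\alpha\beta$, that is, $j\leq n\alpha\beta/(\alpha+\beta)$. Writing $m_n=\lfloor n\alpha\beta/(\alpha+\beta)\rfloor$ and letting $p_{m_n}(t)=\sum_{j=0}^{m_n}c_j t^j$ be the corresponding truncation of the one-variable series of $f$, Parseval gives
$$
\|F-P_n\|_{\mu_K}^2=\sum_{j>m_n}|c_j|^2=\|f-p_{m_n}\|_{L^2(\mu)}^2,
$$
where I use that only diagonal monomials contribute to the inner products.

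Finally, applying the one-variable case of Lemma \ref{L2-rate} together with the one-variable Bernstein--Walsh theorem (Theorem \ref{BW} with $C=\Sigma$ in dimension one) to $f$ on $\D$, whose maximal disk of analyticity around the origin has radius $R=1/r$, gives
$$
\limsup_{n\to\infty}\|f-p_{m_n}\|_{L^2(\mu)}^{1/m_n}=r,
$$
and hence $\limsup_{n\to\infty}\|F-P_n\|_{\mu_K}^{1/n}=r^{\alpha\beta/(\alpha+\beta)}$. A final invocation of Lemma \ref{L2-rate} transfers this rate back to the uniform norm on $K$, producing (\ref{ineg-T2}).

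There is no real obstacle here, but a small point to verify carefully is the orthogonality argument that isolates the diagonal: one must check that the best $L^2(\mu_K)$ approximant from $\Poly(nT_\alpha)$ is exactly the diagonal truncation, which amounts to noting $\int_{\T\times\T} z^j w^j\overline{z^a w^b}\,d\mu_K=0$ whenever $(a,b)\neq(j,j)$, so that the off-diagonal basis elements of $\Poly(nT_\alpha)$ contribute nothing to approximating $F$. The only other subtle point is to confirm that the fractional part in $m_n=\lfloor n\alpha\beta/(\alpha+\beta)\rfloor$ does not affect the $n$-th root limit, which is immediate since $m_n/n\to \alpha\beta/(\alpha+\beta)$.
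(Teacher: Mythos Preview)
Your proof is correct and follows exactly the same route as the paper: reduce to $L^2(\mu_K)$ via Lemma \ref{L2-rate}, use orthonormality of the monomials on $\T\times\T$ to identify the best $L^2$ approximant as the diagonal truncation up to the intersection of the hypotenuse $\alpha x+\beta y=\alpha\beta$ with the line $x=y$, and then apply the one-variable Bernstein--Walsh theorem. The paper's proof is in fact terser than yours, simply noting that the argument of Proposition \ref{threeeight} goes through verbatim once the intersection point $(a_p,a_p)$ is replaced by $(\alpha\beta/(\alpha+\beta),\alpha\beta/(\alpha+\beta))$.
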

\begin{proof}
The proof is identical to the proof of {Proposition \ref{threeeight}}. The only change is that we now need to consider the point which is at the intersection of the line $\beta y=\alpha(\beta-x)$, the side of the triangle tangent to $C_{p}$, and the line $x=y$. This point has both coordinates equal to $\alpha\beta/(\alpha+\beta)$, which implies the result.
\end{proof}
From the two previous propositions, we may make more precise inequality (\ref{obv-ineq}), and also compare the normalized rates of approximation.
\begin{theorem}\label{main-thm2}
The following holds true, for $0<p\leq1$,
\begin{equation}\label{main2}
\limsup_{n}(d_{n}^{C_{p}}(F,K))^{1/n}=\inf_{\alpha}\limsup_{n}(d_{n}^{T_{\alpha}}
(F,K))^{1/n}.
\end{equation}
The inf on the right-hand side is attained when
$\alpha=\alpha_{p}=(1/2)^{1/p-1}$, which corresponds to the isosceles triangle $T_{\alpha}$ such that $\alpha=\beta$. For the rates normalized by the areas of $C_{p}$ and $T_{\alpha}$, we have the following. If $\alpha$ and $\beta$ are close to each other, 
$$
\limsup_{n}(d_{n/A_{\alpha}}^{T_{\alpha}}(F,K))^{1/n}\leq\limsup_{n}(d_{n/A_{p}}^{C_{p}}(F,K))^{1/n},
$$
while if one of $\alpha$ or $\beta$ is close to 0, we have the opposite inequality
$$
\limsup_{n}(d_{n/A_{p}}^{C_{p}}(F,K))^{1/n}\leq\limsup_{n}(d_{n/A_{\alpha}}^{T_{\alpha}}(F,K))^{1/n}.
$$
\end{theorem}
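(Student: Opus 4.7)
The plan is to reduce Theorem \ref{main-thm2} to the closed-form exponents supplied by Propositions \ref{threeeight} and \ref{Rate-T2}, namely
\[\limsup_n d_n^{C_p}(F,K)^{1/n} = r^{(1/2)^{1/p}}, \qquad \limsup_n d_n^{T_\alpha}(F,K)^{1/n} = r^{\alpha\beta/(\alpha+\beta)},\]
and, since $r = 1/R \in (0,1)$, to compare rates by reversing inequalities on exponents. The identity (\ref{main2}) then becomes the problem of maximizing $f(\alpha,\beta) := \alpha\beta/(\alpha+\beta)$ subject to the tangency constraint between the segment from $(0,\alpha)$ to $(\beta,0)$ and the curve $x^p + y^p = 1$, and showing that this maximum equals $(1/2)^{1/p}$ and is attained at the isosceles point $\alpha = \beta = \alpha_p$.

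For the optimization, I first parametrize the tangency. The tangent line to $x^p + y^p = 1$ at $(x_0,y_0)$ on the curve is $x_0^{p-1}x + y_0^{p-1}y = 1$, yielding $\beta = x_0^{1-p}$, $\alpha = y_0^{1-p}$, and hence the constraint
\[\alpha^s + \beta^s = 1, \qquad s := p/(1-p) > 0.\]
A direct Lagrange-multiplier computation with $\nabla f = \lambda \nabla g$, $g(\alpha,\beta) = \alpha^s + \beta^s$, gives $\alpha^{s+1} = \beta^{s+1}$, forcing $\alpha = \beta = (1/2)^{1/s} = (1/2)^{(1-p)/p} = \alpha_p$. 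At this critical point $f = \alpha_p/2 = (1/2)^{1/p}$, exactly the $C_p$-exponent. Since $f\to 0$ at each endpoint of the constraint curve, the interior critical point is the unique global maximum; combining this with the trivial inequality $d_n^{C_p}(F,K) \leq d_n^{T_\alpha}(F,K)$ coming from $T_\alpha \subset C_p$ yields (\ref{main2}) with the infimum attained at $\alpha_p$.

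For the normalized comparison at the isosceles choice, the key observation is once again the inclusion $T_\alpha \subset C_p$, which gives $A_\alpha \leq A_p$ (with equality only at $p=1$). At $\alpha = \beta = \alpha_p$, a direct computation gives $A_\alpha^2 = \alpha_p^2/2 = 2^{1-2/p}$, so the normalized $T_\alpha$-exponent equals
\[(1/2)^{1/p}/A_\alpha = 2^{-1/p}\cdot 2^{1/p - 1/2} = 1/\sqrt{2},\]
while the normalized $C_p$-exponent is $(1/2)^{1/p}/A_p \leq 1/\sqrt{2}$. Since $r<1$, a larger exponent gives a smaller rate, so the normalized $T_\alpha$-rate does not exceed the normalized $C_p$-rate at the isosceles point, and by continuity of all quantities in $\alpha$ the inequality persists for $(\alpha,\beta)$ close to $(\alpha_p,\alpha_p)$.

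Finally, in the degenerate regime $\alpha \to 0$ the tangency constraint forces $\beta \to 1$, so $\alpha\beta/(\alpha+\beta) \sim \alpha$ and $A_\alpha \sim \sqrt{\alpha/2}$; hence the normalized $T_\alpha$-exponent $\sim \sqrt{2\alpha} \to 0$ and the normalized $T_\alpha$-rate tends to $1$, strictly exceeding the fixed normalized $C_p$-rate in $(0,1)$, which yields the reversed inequality. The main obstacle I anticipate is the clean bookkeeping of the several ratios and exponents at the isosceles point; once the identity $A_\alpha^2 = 2^{1-2/p}$ is in hand and combined with $A_\alpha \leq A_p$, the chain of comparisons is essentially forced, and the remaining limits at the boundary are routine.
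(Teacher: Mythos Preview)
Your proof is correct and follows essentially the same route as the paper: reduce via Propositions \ref{threeeight} and \ref{Rate-T2} to a comparison of the exponents $(1/2)^{1/p}$ and $\alpha\beta/(\alpha+\beta)$, use $T_\alpha\subset C_p$ (hence $A_\alpha<A_p$) for the normalized inequality near the isosceles point, and observe that the normalized $T_\alpha$-rate degenerates to $1$ as $\alpha\to 0$. Your Lagrange-multiplier argument with the constraint $\alpha^{p/(1-p)}+\beta^{p/(1-p)}=1$ is a nice explicit justification of the claim, merely asserted in the paper, that the infimum in (\ref{main2}) is attained at $\alpha=\beta$; note that this parametrization only applies for $0<p<1$, but the case $p=1$ is trivial since then $T_\alpha=C_1=\Sigma$.
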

\begin{proof}
In view of (\ref{ineg-T2}), the inf on the right of (\ref{main2}) is attained when $\alpha=\beta$. We obtain the value $r^{a_{p}}$ which equals the $\limsup$ on the left. For the normalized rate, and $\alpha$ and $\beta$ close to each other, the asserted inequality is just a consequence of the fact that
$1/A_{p}<1/A_{\alpha}$. When $\alpha$ or $\beta$ is close to 0, one may argue as in the proof of Theorem \ref{main-thm}.
\end{proof}

\begin{example}\label{silly} Related to the previous class of functions $F(z,w)=f(zw)$, simple examples show that in the limiting case $p=0$, the corresponding polynomial classes are too sparse to uniformly approximate even simple bivariate polynomials.  We offer a simple geometric argument to show $f(x,y)=xy$ is not uniformly approximable on $[0,1]\times [0,1]$ by a sum of univariate polynomials in $x$ and in $y$. Indeed, suppose, given $\epsilon >0$, one could find $p(x)$ of degree $n$, say, and $q(y)$ of degree $n$, say, with 
                  $$|p( x)+q(y)-xy| <\epsilon \ \hbox{for} \ 0\leq x,y \leq 1.$$
                  Then for each fixed $y_0\in [0,1]$, 
                  $$|p(x)-[y_0x -q(y_0)]|<\epsilon \   \hbox{for} \ 0\leq x \leq 1.$$
                  This says that the function $p(x)$ simultaneously uniformly approximates the whole family of linear functions $l_{y_0}(x):=y_0x -q(y_0)$ for $0\leq y_0 \leq 1$ on the interval $[0,1]$ (in the $x-$variable) which is impossible (note the slopes of the $l_{y_0}$ vary from $0$ to $1$).
                  \end{example}

\section{Non-convex random polynomials} Let $C$ be the closure of an open, connected set satisfying (\ref{stdhyp}) and which contains $C_{0}$ in (\ref{ceez}). We let $K$ be a nonpluripolar compact set in 
$\C^d$ satisfying (\ref{forPhin}). We assume, moreover, that  
\begin{equation}\label{vck} V_{C,K} \ \hbox{is continuous; i.e.}, \ V_{C,K}=V_{C,K}^*.
\end{equation}
Let $\tau$ be a probability measure on $K$ such that $(K,\tau)$ satisfies (\ref{wtdbm}). Letting $\{p_j\}$ be an orthonormal basis in $L^2(\tau)$ for $\Poly(nC)$ constructed via Gram-Schmidt applied to an ordered monomial basis $\{z^{\nu}\}$ of $\Poly(nC)$, as described in the introduction we consider random polynomials of $C-$degree at most $n$ of the form 
$$H_n(z):=\sum_{j=1}^{m_n} a_{j}^{(n)}p_j(z)$$
where the $a_{j}^{(n)}$ are i.i.d. complex random variables with a distribution $\phi$ satisfying (\ref{hyp1}) and (\ref{hyp2}). Here $m_n=$dim$(\Poly(nC)$). This gives a probability measure $\mathcal H_n$ on $\Poly(nC)$ and we form the product probability space
$$\mathcal H:=\otimes_{n=1}^{\infty} (\Poly(nC),\mathcal H_n)$$
of sequences of random polynomials. We identify $\mathcal H$ with $\mathcal C:=\otimes_{n=1}^{\infty}(\C^{m_n},Prob_{m_n})$ where, for $G\subset \C^{m_n}$,
$$Prob_{m_n}(G):=\int_G \phi(z_1) \cdots \phi(z_{m_n})dm_2(z_1) \cdots dm_2(z_{m_n}).$$
In this setting, we recall a result from \cite{blrp}. Here, we write 
$$a^{(m_n)}=(a_{1}^{(n)},...,a_{m_n}^{(n)})\in \C^{m_n}$$ 
and $<\cdot,\cdot>, \ \| \cdot \|$ denote the standard Hermitian inner product and associated norm on $\C^{m_n}$.

\begin{corollary} \label{useful} Let $\{w^{(m_n)}=(w_{1}^{(n)},...,w_{m_n}^{(n)})\}$ be a sequence of vectors $w^{(m_n)}\in \C^{m_n}$. For $\phi$ satisfying (\ref{hyp1}) and (\ref{hyp2}), with probability one in $\mathcal C$, if $\{m_n\}$ is a sequence of positive integers with $m_n=\OO(n^M)$ for some $M$, then 
\begin{equation}\label{limsupeqn} 
\forall \{w^{(m_n)}\},\quad \limsup_{n\to \infty} \frac{1}{n}\log |<a^{(m_n)},w^{(m_n)}>|\leq \limsup_{n\to \infty} \frac{1}{n}\log \|w^{(m_n)}\|.
\end{equation}
Moreover, for each $\{w^{(m_n)}\}$, 
\begin{equation}\label{liminfeqn}\liminf_{n\to \infty} \frac{1}{n}\log |<a^{(m_n)},w^{(m_n)}>|\geq \liminf_{n\to \infty} \frac{1}{n}\log \|w^{(m_n)}\|\end{equation}
with probability one in $\mathcal C$; i.e., for each $\{w^{(m_n)}\}$, the set 
$$\{  \{a^{(m_n)}:=(a_{1}^{(n)},...,a_{m_n}^{(n)})\}_{n=1,2,...}\in \mathcal C:(\ref{liminfeqn}) \ \hbox{holds}\}$$
depends on $\{w^{(m_n)}\}$ but is always of probability one.

\end{corollary}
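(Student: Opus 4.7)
The two inequalities require different techniques, and this is precisely why the ``for all sequences'' quantifier can be afforded in (\ref{limsupeqn}) but must weaken to ``for each sequence'' in (\ref{liminfeqn}). For (\ref{limsupeqn}), the plan is to decouple $a$ from $w$ via Cauchy-Schwarz:
$$ |<a^{(m_n)},w^{(m_n)}>| \leq \|a^{(m_n)}\|\cdot \|w^{(m_n)}\|, $$
reducing matters to the $w$-independent claim $\limsup_n \frac{1}{n}\log \|a^{(m_n)}\|\leq 0$ almost surely. The tail hypothesis (\ref{hyp2}) gives $\P(|a_j^{(n)}|\geq n^{M+1})\leq T/n^{2M+2}$, so the union bound over $1\leq j\leq m_n$ yields a probability $\OO(n^{-M-2})$ that is summable in $n$. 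Borel-Cantelli then produces a single event of probability one on which $\max_j|a_j^{(n)}|\leq n^{M+1}$ (and hence $\|a^{(m_n)}\|\leq \sqrt{m_n}\,n^{M+1}$) for all large $n$; this event works simultaneously for every choice of $\{w^{(m_n)}\}$.

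For the opposite inequality (\ref{liminfeqn}), with $\{w^{(m_n)}\}$ now fixed, one needs an anti-concentration estimate showing $|<a^{(m_n)},w^{(m_n)}>|$ cannot decay faster than exponentially. Choose $j^{*}=j^{*}(n)$ maximizing $|w_j^{(n)}|$, so $|w_{j^*}^{(n)}|^{2}\geq \|w^{(m_n)}\|^{2}/m_n$. Conditioned on the variables $a_j^{(n)}$ for $j\neq j^{*}$, the random variable $<a^{(m_n)},w^{(m_n)}>$ takes the form $a_{j^{*}}^{(n)}\overline{w_{j^{*}}^{(n)}}+\zeta$ for a deterministic $\zeta\in\C$; by the change of variables on $\C\cong\R^{2}$ together with (\ref{hyp1}), its conditional density is bounded by $T/|w_{j^{*}}^{(n)}|^{2}\leq Tm_n/\|w^{(m_n)}\|^{2}$. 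Integrating this density bound over a disk of radius $\epsilon\|w^{(m_n)}\|$ yields
$$ \P\bigl(|<a^{(m_n)},w^{(m_n)}>|\leq \epsilon \|w^{(m_n)}\|\bigr) \leq \pi T m_n \epsilon^{2}. $$
For any fixed $\delta>0$, taking $\epsilon=e^{-\delta n}$ produces the summable bound $\pi T m_n e^{-2\delta n}$, and Borel-Cantelli gives, almost surely, $|<a^{(m_n)},w^{(m_n)}>|\geq e^{-\delta n}\|w^{(m_n)}\|$ for all $n$ sufficiently large. This forces $\liminf_n \frac{1}{n}\log|<a^{(m_n)},w^{(m_n)}>|\geq -\delta+\liminf_n \frac{1}{n}\log\|w^{(m_n)}\|$, and intersecting the resulting full-probability events over a countable sequence $\delta_k\downarrow 0$ completes the proof.

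The main obstacle is the anti-concentration step. The bounded-density hypothesis (\ref{hyp1}) is exactly what allows us to reduce, via conditioning on all but one coordinate, to a one-dimensional density bound; the choice of $j^{*}$ maximizing $|w_{j}^{(n)}|$ is crucial so that the scaling factor picks up only a polynomial loss $m_n$ rather than something exponential. The asymmetry between (\ref{limsupeqn}) and (\ref{liminfeqn}) is intrinsic: the $w$-dependent direction singled out by $j^{*}$ enters the estimate, and an adversarial choice of $\{w^{(m_n)}\}$ depending on the realization of $\{a^{(m_n)}\}$ could always drive the inner product to zero, ruling out a uniform-in-$w$ version of (\ref{liminfeqn}).
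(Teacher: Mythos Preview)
Your argument is correct. Note, however, that the paper does not prove this corollary at all: it is introduced with the phrase ``we recall a result from \cite{blrp}'' and simply stated without proof. So there is no proof in the present paper to compare against; the original argument lives in the Bloom--Levenberg paper \cite{blrp}.

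That said, your proof is exactly the standard one and is almost certainly what appears in \cite{blrp}. The Cauchy--Schwarz reduction for (\ref{limsupeqn}) followed by a Borel--Cantelli bound on $\max_j|a_j^{(n)}|$ using the tail hypothesis (\ref{hyp2}), and the conditioning-on-all-but-one-coordinate anti-concentration argument for (\ref{liminfeqn}) using the bounded-density hypothesis (\ref{hyp1}), are precisely the expected ingredients. Your explanation of why the quantifier order differs between the two inequalities is also on point. One minor edge case you might mention explicitly: if $w^{(m_n)}=0$ for infinitely many $n$, the right-hand side of (\ref{liminfeqn}) is $-\infty$ and the inequality is vacuous, so the choice of $j^*$ is only needed when $w^{(m_n)}\neq 0$.
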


 Using Proposition \ref{touse} and Corollary \ref{useful}, we can follow the proof of Theorem 4.1 of \cite{blrp}.

\begin{theorem} \label{pointa} Let $K$ satisfy (\ref{forPhin}) and (\ref{vck}) and let $a_{j}^{(n)}$ be i.i.d. complex random variables with a distribution $\phi$ satisfying (\ref{hyp1}) and (\ref{hyp2}). Then almost surely in $\mathcal H$ we have
$$\bigl(\limsup_{n\to \infty}\frac{1}{n}\log |H_n(z)|\bigr)^*=V_{C,K}(z),\quad z\in\C^{d}.$$

\end{theorem}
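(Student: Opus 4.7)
The plan is to mimic the proof of Theorem 4.1 in \cite{blrp}, using Proposition \ref{touse} and Corollary \ref{useful} in place of the convex-case ingredients. Write $p^{(m_n)}(z) := (p_1(z),\ldots,p_{m_n}(z))$, so that
$$H_n(z) = \langle a^{(m_n)}, \overline{p^{(m_n)}(z)}\rangle, \qquad \|p^{(m_n)}(z)\|^2 = S_n^C(z,z),$$
and note $m_n = \dim\Poly(nC) = \OO(n^d)$, so the polynomial-growth hypothesis of Corollary \ref{useful} is met.

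For the upper bound, I apply the ``$\forall \{w^{(m_n)}\}$'' statement (\ref{limsupeqn}) of Corollary \ref{useful} with the $z$-dependent sequence $w^{(m_n)}(z) := \overline{p^{(m_n)}(z)}$ and use Proposition \ref{touse} to identify the exponential growth rate of $\|p^{(m_n)}(z)\|$ with $V_{C,K}(z)$. This gives, almost surely in $\mathcal H$, for every $z\in\C^d$,
$$\limsup_{n\to\infty}\frac{1}{n}\log|H_n(z)| \leq \limsup_{n\to\infty}\frac{1}{n}\log\|p^{(m_n)}(z)\| = \lim_{n\to\infty}\frac{1}{2n}\log S_n^C(z,z) = V_{C,K}(z).$$
Since $V_{C,K}$ is continuous (assumption (\ref{vck})), this shows the family $\{\frac{1}{n}\log|H_n|\}$ consists of plurisubharmonic functions that are locally uniformly bounded above, almost surely.

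For the lower bound, I apply (\ref{liminfeqn}) of Corollary \ref{useful} with the same weight sequence at a single fixed $z$. This yields, for each fixed $z\in\C^d$,
$$\liminf_{n\to\infty}\frac{1}{n}\log|H_n(z)| \geq \liminf_{n\to\infty}\frac{1}{n}\log\|p^{(m_n)}(z)\| = V_{C,K}(z)$$
with probability one in $\mathcal H$. By Fubini, applied on $\mathcal H\times\C^d$ with Lebesgue measure and the a.s.\ upper bound controlling integrability, this promotes to: almost surely in $\mathcal H$, the displayed inequality holds for Lebesgue-a.e.\ $z\in\C^d$. Combined with the upper bound, almost surely $\lim_n \frac{1}{n}\log|H_n(z)| = V_{C,K}(z)$ for Lebesgue-a.e.\ $z$.

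To conclude, set $\psi_n := \frac{1}{n}\log|H_n|$. Almost surely, $\{\psi_n\}$ is a locally uniformly bounded above family of psh functions, so $(\limsup_n \psi_n)^*$ is psh on $\C^d$. It agrees with $V_{C,K}$ a.e.\ and is $\leq V_{C,K}$ everywhere (from the pointwise upper bound); since $V_{C,K}$ is continuous (hence upper semicontinuous), these two facts force $(\limsup_n \psi_n)^* = V_{C,K}$ everywhere, as required. The main obstacle is precisely this Fubini-plus-regularization step, where one must upgrade an a.s.\ lower bound that a priori holds only pointwise in $z$ to a form that survives taking the upper semicontinuous regularization; the continuity hypothesis (\ref{vck}) is what allows this upgrade to succeed.
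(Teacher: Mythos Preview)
Your argument is correct and follows the same overall architecture as the paper's proof---same use of Corollary~\ref{useful} for the upper bound and the same regularization at the end---but the lower-bound step takes a genuinely different route. The paper fixes a countable dense set $\{z_t\}\subset\C^d$, applies (\ref{liminfeqn}) at each $z_t$, intersects the countably many full-measure events, and then uses continuity of $V_{C,K}$ together with upper semicontinuity of $H:=(\limsup_n \frac{1}{n}\log|H_n|)^*$ to pass from the dense set to all of $\C^d$. You instead run a Fubini/Tonelli argument on $\mathcal H\times\C^d$ to upgrade ``for each $z$, a.s.'' to ``a.s., for Lebesgue-a.e.\ $z$,'' and then close with the fact that a psh function equal Lebesgue-a.e.\ to the continuous function $V_{C,K}$ and dominated by it everywhere must equal it everywhere. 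Both approaches lean on the continuity hypothesis (\ref{vck}); the countable-dense-set argument is slightly more elementary (no joint-measurability check needed), while your Fubini route is a standard alternative that some readers may find more systematic.

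One point to tighten: the sentence ``Since $V_{C,K}$ is continuous \ldots\ this shows the family $\{\frac{1}{n}\log|H_n|\}$ \ldots\ is locally uniformly bounded above'' does not follow from the pointwise $\limsup$ bound alone. You need a uniform-in-$n$ estimate. The cleanest fix is Cauchy--Schwarz, $|H_n(z)|\leq \|a^{(m_n)}\|\sqrt{S_n^C(z,z)}$, combined with the local uniform bound on $\frac{1}{2n}\log S_n^C$ supplied by Proposition~\ref{touse} (via (\ref{unifbd})) and the a.s.\ bound $\limsup_n\frac{1}{n}\log\|a^{(m_n)}\|\leq 0$ that underlies (\ref{limsupeqn}). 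This is exactly how the paper handles it (invoking (\ref{unifbd})), and once inserted your proof is complete.
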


\begin{proof} Using the first part of Corollary \ref{useful}, (\ref{limsupeqn}), with
$$w^{(n)}=p^{(n)}(z):= (p_1(z),...,p_{m_n}(z))\in \C^{m_n},$$ almost surely in $\mathcal H$  
\begin{equation}\label{newlimsup} \limsup_{n\to \infty}\frac{1}{n}\log |H_n(z)|\leq V_{C,K}(z),
\quad z\in\C^{d}
\end{equation}
from Proposition \ref{touse}. Fix a countable dense subset $\{z_t\}_{t\in S}$ of $\C^d$. Using the second part of Corollary \ref{useful}, (\ref{liminfeqn}), for each $z_t$, almost surely in $\mathcal H$ we have
\begin{equation}\label{newliminf}\liminf_{n\to \infty}\frac{1}{n}\log |H_n(z_t)|\geq V_{C,K}(z_t).\end{equation}
A countable intersection of sets of probability one is a set of probability one; thus (\ref{newliminf}) holds almost surely in $\mathcal H$ for each $z_t, \ t\in S$.

Define
$$H(z):=\bigl(\limsup_{n\to \infty}\frac{1}{n}\log |H_n(z)|\bigr)^*.$$
From (\ref{newlimsup}), since $V_{C,K}=V_{C,K}^*$, almost surely in $\mathcal H$, $H(z)\leq V_{C,K}(z)$ for all $z\in \C^d$. Moreover, from (\ref{unifbd}), almost surely in $\mathcal H$ we have $\{\frac{1}{n}\log |H_n(z)|\}$ is locally bounded above and hence $H$ is plurisubharmonic; indeed, $H\in L(\C^d)$. By (\ref{newliminf}), $H(z_t)\geq V_{C,K}(z_t)$ for all $t\in S$. Now given $z\in \C^d$, let $S'\subset S$ with $\{z_t\}_{t\in S'}$ converging to $z$. Then, since $V_{C,K}$ is continuous at $z$, 
$$V_{C,K}(z)=\lim_{t\in S', \ z_t \to z}V_{C,K}(z_t)\leq \limsup_{t\in S', \ z_t \to z}H(z_t)\leq H(z).$$ 
Thus $H(z)=V_{C,K}(z)$ for all $z\in \C^d$.

\end{proof}

To obtain convergence of linear differential operators applied to $\frac{1}{n}\log |H_n(z)|$, we verify convergence to $V_{C,K}(z)$ in $L_{loc}^1(\C^d)$.

\begin{theorem} \label{l1} Let $K$ satisfy (\ref{forPhin}) and (\ref{vck}) and let $a_{j}^{(n)}$ be i.i.d. complex random variables with a distribution $\phi$ satisfying (\ref{hyp1}) and (\ref{hyp2}). Then almost surely in $\mathcal H$ we have
$$\lim_{n\to \infty}\frac{1}{n}\log |H_n(z)|=V_{C,K}(z)$$
in $L_{loc}^1(\C^d)$ and hence
$$\lim_{n\to \infty}dd^c\bigl(\frac{1}{n}\log |H_n(z)|\bigr)=dd^cV_{C,K}(z)$$
as positive currents, where $dd^c=\frac{i}{\pi}\partial \bar \partial$.
\end{theorem}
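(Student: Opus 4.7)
The plan is to apply Proposition \ref{thmdet} to $\psi_n(z) := \frac{1}{n}\log|H_n(z)|$; this yields the $L^1_{loc}$ convergence, after which the current convergence follows automatically from the continuity of $dd^c$ on $L^1_{loc}$ (acting on psh functions).

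First I would show that, almost surely, the sequence $\{\psi_n\}$ is locally uniformly bounded above. The Cauchy--Schwarz inequality applied to $H_n = \sum_j a_j^{(n)} p_j$ gives
$$|H_n(z)|^2 \leq \|a^{(m_n)}\|^2 \cdot S_n^C(z,z),$$
so $\psi_n(z) \leq \frac{1}{n}\log \|a^{(m_n)}\| + \frac{1}{2n}\log S_n^C(z,z)$. A Borel--Cantelli argument using the tail bound (\ref{hyp2}) together with $m_n = O(n^d)$ shows that $\|a^{(m_n)}\| = O(n^M)$ almost surely for some $M$, so the first term tends to $0$; the second term is bounded locally uniformly by $V_K^*(z)+o(1)$ thanks to (\ref{unifbd}). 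Hence $\psi_n \leq V_K^* + o(1)$ locally uniformly almost surely.

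Next I would verify the subsequence-independent hypothesis $(\limsup_{n\in J}\psi_n)^* = V_{C,K}$ of Proposition \ref{thmdet} for every subsequence $J$. The upper bound $(\limsup_{n \in J}\psi_n)^* \leq (\limsup_n \psi_n)^* = V_{C,K}$ follows immediately from Theorem \ref{pointa}, since the starred limsup is monotone with respect to the index set. For the lower bound, fix a countable dense set $\{z_t\}_{t\in S}\subset \C^d$ and apply (\ref{liminfeqn}) of Corollary \ref{useful} with the vector $w^{(m_n)} = (p_1(z_t),\ldots,p_{m_n}(z_t))$; combined with the pointwise identity $\frac{1}{2n}\log S_n^C(z_t,z_t) \to V_{C,K}(z_t)$ from Proposition \ref{touse}, this yields $\liminf_n \psi_n(z_t) \geq V_{C,K}(z_t)$ almost surely for each fixed $t$. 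A countable intersection of probability-one events remains probability one, so this bound holds simultaneously over all $t \in S$. Hence for any subsequence $J$ and any $t$, $\limsup_{n\in J}\psi_n(z_t) \geq V_{C,K}(z_t)$; taking the upper-semicontinuous regularization and using continuity of $V_{C,K}$ gives $(\limsup_{n\in J}\psi_n)^*(z) \geq V_{C,K}(z)$ at every $z\in\C^d$.

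On the intersection of these three probability-one events, Proposition \ref{thmdet} delivers $\psi_n \to V_{C,K}$ in $L^1_{loc}(\C^d)$, and then $dd^c \psi_n \to dd^c V_{C,K}$ as positive currents by continuity of $dd^c$ on $L^1_{loc}$. The only mildly delicate point, which I view as the main obstacle, is the need to handle \emph{all} subsequences in the hypothesis of Proposition \ref{thmdet}; this is absorbed cleanly because the upper bound is monotone in the index set, and the lower bound along an arbitrary subsequence is implied by the stronger $\liminf$ statement (\ref{liminfeqn}) that Corollary \ref{useful} already supplies.
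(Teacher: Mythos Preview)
Your proposal is correct and follows essentially the same route as the paper's proof: reduce to Proposition \ref{thmdet}, get the upper bound for every subsequence from (\ref{newlimsup})/Theorem \ref{pointa}, and get the lower bound by applying (\ref{liminfeqn}) of Corollary \ref{useful} at a countable dense set and then exploiting the continuity of $V_{C,K}$ via upper-semicontinuity of $(\limsup_{n\in J}\psi_n)^*$. The only difference is cosmetic: you spell out the Cauchy--Schwarz/Borel--Cantelli justification of local uniform boundedness, whereas the paper simply points to (\ref{unifbd}) and the proof of Theorem \ref{pointa}.
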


As in \cite{blrp}, the proof of Theorem \ref{l1} will follow from Proposition \ref{thmdet} and a modification of the proof of Theorem \ref{pointa}.

\begin{proof}[Proof of Theorem \ref{l1}] From Proposition \ref{thmdet}, we need to show almost surely in $\mathcal H$ that for any subsequence $J$ of positive integers, we have
$$\bigl(\limsup_{n\in J}\frac{1}{n}\log |H_n(z)|\bigr)^*=V_{C,K}(z)$$
for all $z\in \C^d$. Fix any subsequence $J$. Following the proof of Theorem \ref{pointa}, almost surely in $\mathcal H$  
$$\limsup_{n\in J}\frac{1}{n}\log |H_n(z)|\leq \limsup_{n\to \infty}\frac{1}{n}\log |H_n(z)|\leq V_{C,K}(z)$$
for all $z\in \C^d$ from (\ref{newlimsup}) and the fact that $J$ is a subsequence of positive integers. Fix a countable dense subset $\{z_t\}_{t\in S}$ of $\C^d$.  Then for each $z_t$, almost surely in $\mathcal H$ we have
$$\liminf_{n\in J}\frac{1}{n}\log |H_n(z)|\geq \liminf_{n\to \infty}\frac{1}{n}\log |H_n(z_t)|\geq V_{C,K}(z_t)$$
from (\ref{newliminf}) and the fact that $J$ is a subsequence of positive integers. This relation holds almost surely in $\mathcal H$ for each $z_t, \ t\in S$. 

Now define
$$H_J(z):=\bigl(\limsup_{n\in J}\frac{1}{n}\log |H_n(z)|\bigr)^*.$$
Then almost surely in $\mathcal H$, $H_J$ is plurisubharmonic and $H_J(z)\leq V_{C,K}(z)$ for all $z\in \C^d$; and $H_J(z_t)\geq V_{C,K}(z_t)$ for all $t\in S$. Given $z\in \C^d$, let $S'\subset S$ with $\{z_t\}_{t\in S'}$ converging to $z$. Then
$$V_{C,K}(z)=\lim_{t\in S', \ z_t \to z}V_{C,K}(z_t)\leq \limsup_{t\in S', \ z_t \to z} H_J(z_t)\leq H_J(z).$$ 
Thus $H_J(z)=V_{C,K}(z)$ for all $z\in \C^d$.

\end{proof}

We write $ Z_{H_n}:= dd^c\log |H_n|$ and $ \tilde Z_{H_n}:=\frac{1}{n}dd^c\log |H_n|$, the normalized zero current of $H_n$. The expectation $\mathbb{E}(\tilde Z_{H_n})$ of $ \tilde Z_{H_n}$ is a positive current of bidegree $(1,1)$ defined as follows: the action of $\mathbb{E}(\tilde Z_{H_n})$ on a $(d-1,d-1)$ form $\alpha$ with $C_0^{\infty}(\C^d)$ coefficients is given as the average of the action $\bigl(\tilde Z_{H_n},\alpha \bigr)$ of the normalized zero current $\tilde Z_{H_n}$ on $\alpha$:
$$\bigl(\mathbb{E}(\tilde Z_{H_n}),\alpha\bigr):=\int_{\C^{m_n}}\bigl(\tilde Z_{H_n},\alpha \bigr)dProb_{m_n}(a^{(n)})=\int_{\C^{m_n}}\bigl(\frac{1}{n}dd^c\log |H_n|,\alpha \bigr)dProb_{m_n}(a^{(n)}).$$
Using Theorem \ref{l1}, we verify the analogue of Theorem 7.1 of \cite{blrp}.

\begin{theorem} \label{keythma} Let $K$ satisfy (\ref{forPhin}) and (\ref{vck}) and let $a_{j}^{(n)}$ be i.i.d. complex random variables with a distribution $\phi$ satisfying (\ref{hyp1}) and (\ref{hyp2}). Then $\lim_{n\to \infty} \mathbb{E}(\tilde Z_{H_n})=dd^cV_{C,K}$ as positive $(1,1)$ currents.
\end{theorem}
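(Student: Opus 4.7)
The plan is to test $\mathbb{E}(\tilde Z_{H_n})$ against an arbitrary smooth $(d-1,d-1)$ form $\alpha$ with compact support and show convergence of the pairings $(\mathbb{E}(\tilde Z_{H_n}),\alpha)\to(dd^cV_{C,K},\alpha)$, following the scheme of Theorem 7.1 in \cite{blrp}. Integration by parts combined with Fubini--Tonelli gives
$$\bigl(\mathbb{E}(\tilde Z_{H_n}),\alpha\bigr)=\int_{\C^d}\mathbb{E}\Bigl[\tfrac{1}{n}\log|H_n(z)|\Bigr]\,dd^c\alpha(z),$$
so the theorem reduces to showing that $\mathbb{E}[\tfrac{1}{n}\log|H_n(z)|]$ converges to $V_{C,K}(z)$, and $L^1_{loc}(\C^d)$ convergence suffices.

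The key step is the factorization $|H_n(z)|=\sqrt{S_n^C(z,z)}\,|Y_n(z)|$, where $Y_n(z):=\sum_{j=1}^{m_n}c_j(z)\,a_j^{(n)}$ with $c_j(z):=p_j(z)/\|p^{(n)}(z)\|$, so that the coefficient vector $(c_j(z))$ lies on the unit sphere in $\C^{m_n}$. Taking expectations yields
$$\mathbb{E}\bigl[\tfrac{1}{n}\log|H_n(z)|\bigr]=\tfrac{1}{2n}\log S_n^C(z,z)+\tfrac{1}{n}\mathbb{E}[\log|Y_n(z)|].$$
By Proposition \ref{touse} the first summand converges to $V_{C,K}$ in $L^1_{loc}(\C^d)$; the task is therefore reduced to the uniform bound $\mathbb{E}[\log|Y_n(z)|]=O(\log n)$ over $z\in\C^d$.

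For the lower bound, I would pick $j_0$ with $|c_{j_0}(z)|^2\geq 1/m_n$ and condition on the remaining $a_j^{(n)}$'s; hypothesis (\ref{hyp1}) then gives $\mathrm{Prob}(|Y_n(z)|<r)\leq T\pi m_n r^2$, and integrating the distributional tail yields $\mathbb{E}[(\log|Y_n(z)|)^-]\leq\tfrac{1}{2}\log(T\pi m_n)+O(1)$. For the upper bound, the trivial estimate $|Y_n(z)|\leq\sqrt{m_n}\max_j|a_j^{(n)}|$ combined with a union bound and the tail hypothesis (\ref{hyp2}) gives $\mathrm{Prob}(|Y_n(z)|>R)\leq Tm_n^2/R^2$, whence $\mathbb{E}[\log^+|Y_n(z)|]=O(\log m_n)$. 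Since $m_n=O(n^d)$, dividing by $n$ drives both contributions to $0$ uniformly in $z$.

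The main obstacle is the lower anti-concentration estimate: one must rule out the possibility that $|H_n(z)|$ is pathologically small on a non-negligible event, and the density boundedness (\ref{hyp1}) is exactly the tool that delivers this. The remaining ingredients are routine: the Fubini swap, the Bergman kernel asymptotics of Proposition \ref{touse}, and the continuity assumption (\ref{vck}), which ensures that the limit really is $V_{C,K}$ rather than its upper semicontinuous regularization.
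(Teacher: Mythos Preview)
Your argument is correct and takes a genuinely different route from the paper's. The paper deduces Theorem~\ref{keythma} from the almost--sure $L^1_{loc}$ convergence of Theorem~\ref{l1}: given a test form $\alpha$, it sets $f_n(a^{(n)}):=(\tilde Z_{H_n},\alpha)$, observes that these random variables are uniformly bounded (since the normalized zero currents $\tilde Z_{H_n}$ have uniformly bounded mass on $\supp\alpha$), and then applies dominated convergence on the product space $\mathcal H$ to pass from the a.s.\ limit to the limit of expectations.

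You instead bypass the almost--sure result entirely and compute $\mathbb{E}[\tfrac{1}{n}\log|H_n(z)|]$ directly, via the factorization $|H_n(z)|=\sqrt{S_n^C(z,z)}\,|Y_n(z)|$. The deterministic piece is handled by Proposition~\ref{touse}; the random piece $\tfrac{1}{n}\mathbb{E}[\log|Y_n(z)|]$ is driven to zero by the anti--concentration bound from (\ref{hyp1}) and the tail bound from (\ref{hyp2}). This is more self--contained (it does not invoke Theorems~\ref{pointa} or~\ref{l1}, nor Corollary~\ref{useful}) and more quantitative: it actually shows the stronger fact that $\mathbb{E}[\tfrac{1}{n}\log|H_n|]\to V_{C,K}$ in $L^1_{loc}(\C^d)$ with an explicit $O((\log n)/n)$ error for the random part. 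The paper's argument, on the other hand, is shorter once Theorem~\ref{l1} is in hand, and the uniform--mass domination is a clean general principle that would extend more readily to situations where the Bergman--kernel factorization is less explicit. One small point: your Fubini step is justified \emph{a posteriori} by the very bounds you derive on $\mathbb{E}[|\log|Y_n(z)||]$, together with the local boundedness of $\log S_n^C(z,z)$; it would be worth saying this explicitly.
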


\begin{proof} Theorem \ref{l1} gives
$$\lim_{n\to \infty}dd^c\bigl(\frac{1}{n}\log |H_n|\bigr)=dd^cV_{C,K}$$
as positive currents a.s. in $\mathcal H$. We want to show
$$\lim_{n\to \infty}\bigl(\mathbb{E}(\tilde Z_{H_n}),\alpha\bigr)=\bigl(dd^cV_{C,K},\alpha \bigr)$$
for each $(d-1,d-1)$ form $\alpha$ with $C_0^{\infty}(\C^d)$ coefficients. 
As before, we write $a^{(n)}$ for the $m_n-$tuple $\{a_j^{(n)}\}$ of coefficients of $H_n$. Given $\alpha$, define 
$$f_n=f_n^{(\alpha)}:\C^{m_n}\to \C \ \hbox{as} \ f_n(a^{(n)}):=\bigl(\tilde Z_{H_n},\alpha \bigr).$$
Then $\{f_n\}$ are uniformly bounded by the norm of $\alpha$ on its support and extending $f_n$ to $F_n$ on ${\mathcal H}$ via 
$$F_n(\cdots,a^{(n)},\cdots):=f_n(a^{(n)}),$$ 
the $\{F_n\}$ are uniformly bounded on ${\mathcal H}$. Define
$$\int_{\mathcal H} F_n(\cdots,a^{(n)},\cdots)\otimes_{n=1}^{\infty}dProb_{m_n}(a^{(n)})=\int_{\C^{m_n}}f_n(a^{(n)})dProb_{m_n}(a^{(n)}).$$
We apply dominated convergence to $\{F_n\}$ on ${\mathcal H}$ to conclude.

\end{proof}

\begin{remark} Following \cite{blrp}, using different arguments one can eliminate the need for (\ref{hyp1}) in Theorem \ref{keythma}. Moreover, one can slightly weaken the hypothesis (\ref{hyp2}) as in \cite{Bay}.

\end{remark}

For $2\leq k\leq d$, we consider
the common zeros of $k$ polynomials $H_n^{(1)},...,H_n^{(k)}$  where 
$$H_n^{(l)}(z):=\sum_{j=1}^{m_n} a_{j}^{(n,l)}p_j(z), \ l=1,...,k$$ 
with the $a_{j}^{(n,l)}$ i.i.d. complex random variables with a distribution $\phi$ satisfying (\ref{hyp1}) and (\ref{hyp2}). For $k=2,3,...,d$, we observe that the wedge product
$$Z^k_{{\bf H}_n}:=dd^c\log |H_n^{(1)}|\wedge \cdots \wedge dd^c\log |H_n^{(k)}|$$
is a.s. well-defined as a positive $(k,k)$ current; cf., \cite{Bay} or \cite{blrp}. We write
$\tilde Z^k_{{\bf H}_n}=({1}/{n^k})Z^k_{{\bf H}_n}
$
for the normalized zero current. The expectation $\mathbb{E}(\tilde Z^k_{{\bf H}_n})$ of $\tilde Z^k_{{\bf H}_n}$ is a positive $(k,k)$ current: for $k=2,3,...,d$, the action of $\mathbb{E}(\tilde Z_{{\bf H}_n})$ on a $(d-k,d-k)$ form $\alpha$ with $C_0^{\infty}(\C^d)$ coefficients is given as the average of the action $\bigl(\tilde Z_{{\bf H}_n},\alpha \bigr)$ of the normalized zero current $\tilde Z_{{\bf H}_n}$ on $\alpha$: writing $a^{(n,l)}=(a_1^{(n,l)},...,a_{m_n}^{(n,l)}), \ l=1,...,k$,
$$\bigl(\mathbb{E}(\tilde Z_{{\bf H}_n}),\alpha\bigr):=\int_{(\C^{m_n})^k}\bigl(\tilde Z_{{\bf H}_n},\alpha \bigr)dProb_{m_n}(a^{(n,1)})\cdots dProb_{m_n}(a^{(n,k)})$$
$$=\int_{(\C^{m_n})^k}\bigl(dd^c\log |H_n^{(1)}|\wedge \cdots \wedge dd^c\log |H_n^{(k)}|,\alpha \bigr)dProb_{m_n}(a^{(n,1)})\cdots dProb_{m_n}(a^{(n,k)}).$$
If, in addition, the distribution $\phi$ is smooth (e.g., $\phi(z)=\frac{1}{\sqrt {2\pi}}e^{-\pi |z|^2}$, a standard complex Gaussian), by the independence of $H_n^{(1)},...,H_n^{(k)}$ we have 
$$\mathbb{E}(\tilde Z^k_{{\bf H}_n})=\mathbb{E}(\frac{1}{n^k}dd^c\log |H_n^{(1)}|\wedge \cdots \wedge dd^c\log |H_n^{(k)}|)$$
$$=\mathbb{E}(\tilde Z_{H_n^{(1)}})\wedge \cdots \wedge \mathbb{E}(\tilde Z_{H_n^{(k)}})=[\mathbb{E}(\tilde Z_{H_n^{(1)}})]^k$$
(cf., the argument in Corollary 3.3 of \cite{ba}). Thus from Theorem \ref{keythma} we obtain the asymptotics of these $(k,k)$ currents.

\begin{corollary} \label{cora} Let $K$ satisfy (\ref{forPhin}) and (\ref{vck}) and let $a_{j}^{(n,l)}$ be i.i.d. complex random variables with a smooth distribution $\phi$ satisfying (\ref{hyp1}) and (\ref{hyp2}). Then for $k=2,...,d$,
\begin{equation}\label{exp} \lim_{n\to \infty} \mathbb{E}(\tilde Z^k_{{\bf H}_n})=\lim_{n\to \infty} \mathbb{E}(\frac{1}{n^k}dd^c\log |H_n^{(1)}|\wedge \cdots \wedge dd^c\log |H_n^{(k)}|)=(dd^cV_{C,K})^k.\end{equation}
\end{corollary}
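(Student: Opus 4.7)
The plan is to combine the factorization identity and the $(1,1)$ convergence already in hand. Since the coefficients of the different $H_n^{(l)}$ are independent and $\phi$ is smooth, the expectation of the wedge product factorizes as $\mathbb{E}(\tilde Z^k_{{\bf H}_n}) = [\mathbb{E}(\tilde Z_{H_n^{(1)}})]^k$ (as recorded just before the statement), while Theorem \ref{keythma} gives $\mathbb{E}(\tilde Z_{H_n^{(1)}}) \to dd^c V_{C,K}$ in the $(1,1)$ case. Hence it suffices to upgrade this weak convergence of positive closed $(1,1)$ currents to convergence of the $k$-th wedge powers.

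To this end, I would introduce the psh potentials $u_n(z) := \mathbb{E}\bigl(\tfrac{1}{n}\log|H_n^{(1)}(z)|\bigr)$, which are plurisubharmonic in $z$ as averages of psh functions and satisfy $dd^c u_n = \mathbb{E}(\tilde Z_{H_n^{(1)}})$. The next two verifications are: (a) $\{u_n\}$ is locally uniformly bounded (both above and below) on $\C^d$, and (b) $u_n \to V_{C,K}$ in $L^1_{loc}(\C^d)$. For (a), the upper bound comes from (\ref{unifbd}) in Proposition \ref{touse} once one controls $\mathbb{E}[\log^+ \|a^{(n)}\|]$ using (\ref{hyp1})--(\ref{hyp2}), which is of order $\log n$ and hence negligible after division by $n$; the lower bound uses that the normalized constant $p_0\equiv 1$ is in the orthonormal basis, so $H_n^{(1)}(z)$ is a non-degenerate linear combination of the $a_j^{(n,1)}$, and under (\ref{hyp1}) the quantity $\mathbb{E}[\log|H_n^{(1)}(z)|]$ is locally bounded below. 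For (b), dominated convergence applied to the almost sure $L^1_{loc}$ convergence in Theorem \ref{l1}, with the bounds from (a) as an envelope, yields $u_n \to V_{C,K}$ in $L^1_{loc}$.

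With (a) and (b) in place, and $V_{C,K}$ continuous by the standing hypothesis (\ref{vck}), the Bedford--Taylor/Demailly continuity of the mixed Monge--Amp\`ere operator---valid for locally bounded psh sequences converging in $L^1_{loc}$ to a continuous psh limit---gives $(dd^c u_n)^k \to (dd^c V_{C,K})^k$ weakly as $(k,k)$ currents, which combined with the factorization identity is precisely (\ref{exp}). The main obstacle I expect is the two-sided local boundedness of $u_n$ under the mild hypotheses (\ref{hyp1})--(\ref{hyp2}); in the clean complex Gaussian case one has $u_n = \tfrac{1}{2n}\log S_n^C + c_0/n$ for a universal constant $c_0$, so the boundedness and convergence follow immediately from Proposition \ref{touse}, whereas for general $\phi$ one follows the conditioning argument of \cite{blrp} on a single coefficient (using $p_0\equiv 1$) to ensure the density of $H_n^{(1)}(z)$ remains locally bounded.
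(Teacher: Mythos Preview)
Your proposal is the paper's argument, just fleshed out: the paper records the factorization $\mathbb{E}(\tilde Z^k_{{\bf H}_n})=[\mathbb{E}(\tilde Z_{H_n^{(1)}})]^k$ in the paragraph immediately preceding the corollary (citing Corollary~3.3 of \cite{ba}) and then writes only ``Thus from Theorem~\ref{keythma} we obtain the asymptotics of these $(k,k)$ currents,'' with no further justification. Your introduction of the potentials $u_n=\mathbb{E}\bigl(\tfrac{1}{n}\log|H_n^{(1)}|\bigr)$ and the verifications (a), (b) are exactly what is needed to make that sentence honest, and your lower-bound argument via conditioning on the coefficient of $p_1\equiv 1$ under (\ref{hyp1}) is correct.

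One point to tighten: the Monge--Amp\`ere continuity you invoke---$(dd^c u_n)^k\to(dd^c u)^k$ for locally bounded psh $u_n\to u$ in $L^1_{loc}$ with $u$ continuous---is not one of the standard Bedford--Taylor statements (which ask for monotone, locally uniform, or capacity convergence); $L^1_{loc}$ convergence of uniformly bounded psh functions does not in general propagate to wedge powers. In the Gaussian case you single out, $u_n=\tfrac{1}{2n}\log S_n^C+c_0/n$ actually converges \emph{pointwise everywhere} to $V_{C,K}$ by Proposition~\ref{touse}, and (\ref{step2b}) even gives $u_n-\tfrac{1}{n}\log\Phi_n\to 0$ uniformly on $\C^d$, so you have considerably more than $L^1_{loc}$ at your disposal and should name the mode you are really using. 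The paper does not engage with this step at all, so on this issue your write-up is already more careful than the original.
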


Taking $K=E_1\times \cdots \times E_d$, a product of regular planar compacta $E_j$, we have 
$$V_{C,K}(z_1,...,z_d)=\max_{j=1,...,d}g_{E_j}(z_j).$$
Letting $k=d$ in Corollary \ref{cora}, we have the following result.

\begin{corollary} \label{expzero1} For $K=E_1\times \cdots \times E_d$, a product of regular planar compacta $E_j$, we have 
$$\lim_{n\to \infty} \mathbb{E}(\tilde Z^d_{{\bf H}_n})=(dd^cV_{C,K})^d=\otimes_{j=1}^d \mu_{E_j}$$
where $\mu_{E_j}=\Delta g_{E_j}$.

\end{corollary}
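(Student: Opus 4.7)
The plan is to deduce the first equality as a direct application of Corollary \ref{cora} with $k=d$, and to identify the Monge--Amp\`ere measure via the classical product formula for Siciak extremal functions of products of regular planar compacta.

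First I would verify the hypotheses of Corollary \ref{cora}. The product $K=E_1\times\cdots\times E_d$ is nonpluripolar, and Proposition \ref{easy} gives
$$V_{C,K}(z)=\max_{j=1,\ldots,d} g_{E_j}(z_j),$$
which is continuous since each $E_j$ is regular, so (\ref{vck}) holds. The pointwise limit (\ref{forPhin}) for such products with $C$ containing $C_0$ is recorded in Remark \ref{imp} (and follows from Proposition \ref{imp2}). Corollary \ref{cora} with $k=d$ then gives the first equality
$$\lim_{n\to\infty}\mathbb{E}(\tilde Z^d_{{\bf H}_n})=(dd^c V_{C,K})^d.$$

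For the second equality I would note that $V_{C,K}$ coincides on $\C^d$ with the ordinary Siciak extremal function $V_{\Sigma,K}(z)=\max_j g_{E_j}(z_j)$ of $K$, so it suffices to prove the classical identity $(dd^c V_{\Sigma,K})^d=\otimes_{j=1}^d\mu_{E_j}$. A direct argument goes as follows. On the open set where a single $g_{E_{j_0}}(z_{j_0})$ strictly dominates the other $g_{E_j}(z_j)$, the function $V_{\Sigma,K}$ locally equals $g_{E_{j_0}}(z_{j_0})$, which is pluriharmonic in the remaining $d-1$ variables, so $(dd^c V_{\Sigma,K})^d$ vanishes there. The full measure is therefore supported on the coincidence locus inside $K=E_1\times\cdots\times E_d$, and its restriction can be identified with $\otimes_j\mu_{E_j}$ by approximating $\max$ by a smooth regularized maximum (e.g.\ the soft-max $\epsilon\log\sum_j e^{g_{E_j}(z_j)/\epsilon}$), invoking Bedford--Taylor continuity of the Monge--Amp\`ere operator along the resulting decreasing sequence of continuous psh functions, and matching total masses against the probability measure $\otimes_j\mu_{E_j}$.

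The one genuinely nontrivial step is this last product identity for $(dd^c V_{\Sigma,K})^d$. I expect the cleanest route is to invoke the standard product formula for the pluripotential equilibrium measure of a product of regular planar compacta in the literature (see, e.g., Klimek's monograph) rather than redo the Monge--Amp\`ere computation, after which the corollary reduces to a direct specialization of Corollary \ref{cora}.
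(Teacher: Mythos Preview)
Your proposal is correct and follows essentially the same route as the paper: the paper simply states that the corollary follows from Corollary \ref{cora} with $k=d$ together with the formula $V_{C,K}(z)=\max_j g_{E_j}(z_j)$ from Proposition \ref{easy}, treating the identity $(dd^c\max_j g_{E_j}(z_j))^d=\otimes_j\mu_{E_j}$ as classical without further comment. You are more careful than the paper in explicitly checking (\ref{forPhin}) and (\ref{vck}) via Remark \ref{imp}, and in sketching/citing the Monge--Amp\`ere product formula, but the argument is the same.
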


\begin{remark} This holds, e.g., for $C_p$ in (\ref{ceepee}) for all $0\leq p \leq 1$ (for $p=0$, see Remark \ref{cob2} below). For $p=0$ the classes $\Poly(nC_0)$ are very sparse -- they consist of sums $\sum_{j=1}^dp_j(z_j)$ of univariate polynomials $p_j$ of degree at most $n$ while for $p=1$ the classes $\Poly(nC_1)=\Poly(n\Sigma)$ are the ``standard'' polynomials of degree at most $n$. On the other hand, for $p>1$ the set $C_p$ is convex, and from (\ref{tabform}) and (\ref{phic}), if we let $1/p+1/q=1$,
$$V_{C_p,K}(z_1,...,z_d)=\phi_{C_p}(g_{E_1}(z_1),...,g_{E_d}(z_d))=[g_{E_1}(z_1)^{q}+\cdots +g_{E_d}(z_d)^{q}]^{1/q}.$$
The results in \cite{Bay} show that the expected normalized zero measures $\mathbb{E}(\tilde Z^d_{{\bf H}_n})$ for the random polynomial mappings in this setting converge to 
$$(dd^cV_{C_p,K})^d=dd^c\bigl([g_{E_1}(z_1)^{q}+\cdots +g_{E_d}(z_d)^{q}]^{1/q})\bigr)^d$$
which clearly changes with $p$.
\end{remark}

\begin{remark} \label{cob2} As in subsection 2.3, if $\mu$ is a Bernstein-Markov measure on a nonpluripolar compact set $K\subset \C^d$ satisfying (\ref{forPhin}) and (\ref{vck}), the inequality (\ref{step2}) is valid for $C_0,K$ and $\mu$ and all the results of this section are valid. In particular, we can take $K=B:=\{z\in \C^d: |z_1|^2+ \cdots +|z_d|^2\leq 1\}$, the complex Euclidean ball in $\C^d$ and $\mu_B$ normalized surface area measure on $\partial B$, or $K=T:=\{z\in \C^d: |z_1|= \cdots = |z_d| = 1\}$ the unit torus and $\mu_T$ normalized Haar measure on $T$. From Proposition \ref{easy} and (\ref{vcob}), the $C_0-$extremal functions are the same: $V_{C_0,K}(z)=\max [0,\log|z_1|,...,\log |z_d|]$. Thus {\it in both cases the corresponding expected normalized zero measures $\mathbb{E}(\tilde Z^d_{{\bf H}_n})$ converge to $(dd^c V_{C_0,K})^d=\mu_T$.}

\end{remark}

\section{Questions and further directions} The reader will note that many basic issues in the non-convex theory are unresolved. We include a partial list. In 1. and 2. $C$ is the closure of an open, connected set satisfying (\ref{stdhyp}) and $K$ is a compact set in  $\C^d$.

\begin{enumerate}
\item Do we have equality in (\ref{vgevc}), i.e., does 
$$V_{C,K}(z)= \sup\{\frac{1}{{\rm deg}_C(p)}\log |p(z)|:p\in \PP_{d}, \ \|p\|_K\leq 1\}?$$

\item Does the limit in (\ref{forPhin}) 
$$\lim_{n\to \infty} \frac{1}{n}\log \Phi_n(z) =V_{C,K}(z)$$
exist for $z\in \C^d$?

\item For the complex Euclidean ball $B\subset \C^d$, are the $C_p-$extremal functions $V_{C_p,B}$ different for different $p\in (0,1)$? Proposition \ref{propb} simply asserts that $V_{C_p,B}(z) \not = V_{C_{0},B}(z), \ V_{C_{1},B}(z)$ at certain points $z\in \C^d$.

\item For $A, B\subset \C$ and $0< p \leq 1$ we saw that  
$V_{C_p,A\times B}(z,w)=\max[g_A(z),g_B(w)]$. On the other hand, for the triangles $T_{\alpha}$ defined before \ref{def-area}, we have $V_{T_{\alpha},A\times B}(z,w)=\max(\beta g_A(z),\alpha g_B(w))$ (cf., Proposition 2.4 of \cite{BosLev}) so that 
$$V_{C_p,A\times B}(z,w)=\sup_{0<\alpha <1}V_{T_{\alpha},A\times B}(z,w).$$ 
Is the equality 
$$V_{C_p,K}(z,w)=\sup_{0<\alpha <1} V_{T_{\alpha},K}(z,w)$$
true for more general $K\subset \C^2$, e.g., is this true  for the complex Euclidean ball $B\subset \C^2$?


\end{enumerate}


\vspace{1cm}
{\obeylines
\texttt{N. Levenberg, nlevenbe@indiana.edu
Indiana University, Bloomington, IN 47405 USA
\medskip
F. Wielonsky, franck.wielonsky@univ-amu.fr
Laboratoire I2M - UMR CNRS 7373
Universit\'e Aix-Marseille, CMI 39 Rue Joliot Curie
F-13453 Marseille Cedex 20, FRANCE }
}

\end{document}